\documentclass[10pt, leqno]{article}
\usepackage{amsmath,amsfonts,amsthm,amscd,amssymb}
\numberwithin{equation}{section}
\theoremstyle{plain}
\newtheorem{theo}{Theorem}[section]
\newtheorem{prop}[theo]{Proposition}
\newtheorem{coro}[theo]{Corollary} 
\newtheorem{lemm}[theo]{Lemma}
\theoremstyle{definition}
\newtheorem{defi}[theo]{Definition}
\newtheorem{rema}[theo]{Remark}
\newtheorem{theo-defi}[theo]{Theorem-Definition}
\newtheorem{prop-defi}[theo]{Proposition-Definition}
\newtheorem{rema-defi}[theo]{Remark-Definition}
\newtheorem{exem-defi} [theo]{Example-Definiton}

\newtheorem{exem}[theo]{Example}

\def \al{\alpha}

\def \bul{\bullet}
\def \col{\colon}
\def \Del{\Delta}

\def \eps{\epsilon}
\def \Gam{\Gamma}

\def \inf{\infty}

\def \kap{\kappa}
\def \Lam{\Lambda}
\def \lam{\lambda}

\def \Lo{\Longrightarrow}
\def \lo{\longrightarrow}
\def \lom{\longmapsto}
\def \mab{\mathbb}

\def \Om{\Omega}
\def \om{\omega}
\def \ol{\overline}
\def \os{\overset}
\def \parno{\par\noindent}

\def \sig{\sigma}

\def \sus{\subset}

\def \ul{\underline}
\def \us{\underset}

\def \vp{\varpi}

\def \vpl{\varprojlim}

\def \wh{\widehat}
\def \wt{\widetilde}
\bigskip

%$\lower 0.6mm \hbox{\abh g} \hskip 
%-13.5mm (X'/S')^{log}_{crys}$
\setlength{\textwidth}{130mm}
\setlength{\textheight}{215mm}
\newcommand{\getsfrom}{\ensuremath{
\longleftarrow\kern-.50em\lower.0ex\hbox%
{$\shortmid\,$}}}

%\begin{document}
%\title{$p$-adic weight spectral sequences of 
%families of simple normal crossing log varieties}
%\author{ Nakkajima}
%\maketitle

\begin{document}
\title{Steenbrink isomorphism and 
crystals on tubular neighbourhoods}
\author{Yukiyoshi Nakkajima
\date{}\thanks{2020 Mathematics subject 
classification number: 14F40.\endgraf}}
\maketitle

\bigskip
\parno
{\bf Abstract.---}
For a locally nilpotent integrable connection on 
a proper (strict) semistable family over a small polydisc 
with a relative horizontal simple normal crossing divisor,  
we construct a canonical section in derived categories 
inducing an isomorphism from the log de Rham cohomology of it on the log special fiber 
of this family to the stalk of the higher direct image of it at the origin 
modulo the maximal ideal of the localization of the structure sheaf at the origin. 
As an application of the existence of this section, we prove the following: 
(1): the locally freeness of the higher direct image by a purely algebraic method; 
(2): the existence of a canonical isomorphism between the base change  
of the log de Rham cohomology of the log special fiber to the small polydisc 
and the higher direct image above. 
The result (2) tells us that the higher direct image has the crystalline nature: 
the invariance of the higher direct image. 
%We also prove analogous results for 
%a proper generalized strict semistable scheme   
%with a horizontal simple normal crossing divisor over a smooth scheme 
%over a field of characteristic $0$ by a purely algebraic method. 

\section{Introduction}
This article is a sequel of my article \cite{nf}. 
The most important aim of this article is to show that 
the higher direct image of 
the log de Rham complex of a locally nilpotent integrable connection on 
a proper strict semistable analytic family over a small unit polydisc 
is the resulting crystal of the log de Rham cohomology 
of the connection on the log special fiber of this family. 
To prove this, we construct an $\infty$-adic analogue of the  
Hyodo-Kato section in the $p$-adic case (\cite{hk}, \cite{ey}). 
%and to give an affirmative answer for a classical problem about  
%the construction of an isomorphism between 
%the (log) de Rham cohomologies of fibers of 
%a proper (generalized strict) semistable analytic space over a unit (poly) disc
%which is {\it independent} of the choice of a parameter of the unit (poly) disc 
%in a certain sense stated below. 
%More strongly we construct a canonical isomorphism 
%between the (log) de Rham cohomological complexes of the fibers 
%in the derived category of bounded below complexes of ${\mab C}$-vector spaces. 
%This is the best answer for the problem above. 
And then we prove the locally freeness of the higher direct image 
by a {\it purely algebraic} method. 
%except the proper base change theorem for a Hausdorff topological space. 
Lastly we construct a canonical isomorphism between the base change  
of the log de Rham cohomology to the small polydisc and the higher direct image, 
which implies the existence of the crystal above.   
This isomorphism induces an $\infty$-adic analogue of 
the Hyodo-Kato isomorphism in the $p$-adic case (\cite{bei}, \cite{nb}, \cite{ey}). 
We call this $\infty$-adic analogue the {\it Steenbrink isomorphism}.  
%The generalization tells us that the higher direct image has 
%the crystalline nature on a small polydisc.  
The existence of the crystal should have applications 
for certain classes of concrete degenerate log analytic spaces 
and (local) moduli problems in algebraic and analytic geometry. 
%As an example, we prove the injectivity result of the automorphism group 
%of a log K\"{a}hler $K3$ surface and we prove that 
%the necessary and sufficient condition for the extendability of 
%a morphism of the degenerations of abelian varieties. 
\par 
First let us recall Steenbrink's result.
\par 
Let $\os{\circ}{\Del}:=\{z\in {\mab C}~\vert~\vert z\vert <1\}$ 
be the unit disc and let $f\col \os{\circ}{\cal X}\lo \os{\circ}{\Del}$ be a proper 
(not necessarily strict) semistable family of 
analytic spaces over ${\mab C}$. 
Endow $\os{\circ}{\cal X}$ and 
$\os{\circ}{\Del}$ with the canonical log structures, respectively.  
Let ${\cal X}$ and $\Del$ be the resulting log analytic spaces, respectively. 
Let $s$ be the log point of ${\mab C}$: 
the log analytic space whose underlying analytic space is 
the origin $\{O\}$ and whose log structure is the inverse image of the log structure 
of $\Del$ by the inclusion $\{O\}\os{\subset}{\lo} \os{\circ}{\Del}$. 
Let $X/s$ be the log special fiber of ${\cal X}/\Del$. 
Let $\Om^{\bul}_{{\cal X}/\Del}$ and 
$\Om^{\bul}_{X/s}$ be the relative log de Rham complexes of 
${\cal X}/\Del$ and $X/s$, respectively. 
Set $H^q_{\rm dR}(X/s)=H^q(X,\Om^{\bul}_{X/s})$ $(q\in {\mab N})$. 
Then Steenbrink has proved the following theorem: 

\begin{theo}[{\rm \cite{sti}}]\label{theo:xds}
The higher direct image $R^qf_*(\Om^{\bul}_{{\cal X}/\Del})$ 
$(q\in {\mab N})$ is a coherent locally free ${\cal O}_{\Del}$-module 
$($and commutes with base change of a strict morphism of log analytic spaces$)$. 
\end{theo}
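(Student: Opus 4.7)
The plan is to mirror the three-fold strategy indicated in the abstract: first construct a canonical Steenbrink-type section in the derived category identifying the stalk at the origin modulo the maximal ideal, then deduce local freeness by a Nakayama-type argument using the Gauss--Manin connection, and finally bootstrap to strict base change. Coherence of $R^qf_*(\Omega^{\bullet}_{\mathcal{X}/\Delta})$ on $\Delta$ is essentially formal: the relative log differentials $\Omega^p_{\mathcal{X}/\Delta}$ are coherent, the complex is bounded, and $f$ is proper, so Grauert's coherence theorem applies termwise and in hypercohomology.

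The key technical input is a canonical section
\begin{equation*}
R\Gamma(X, \Omega^{\bullet}_{X/s}) \lo (Rf_*\Omega^{\bullet}_{\mathcal{X}/\Delta})_0 \otimes^{L}_{\mathcal{O}_{\Delta, 0}} \mathbb{C},
\end{equation*}
which should be an isomorphism. I would construct it by choosing a tubular embedding of $X$ into $\mathcal{X}$ adapted to a good semistable chart and using the relative log de Rham complex as a resolution in a suitable infinitesimal site around $X$. Because we work $\infty$-adically rather than $p$-adically, Frobenius lifts and PD-structures are unavailable; instead I would use convergent power series expansions in the parameter $t$ near the origin, combined with the locally nilpotent hypothesis on the connection, to single out one canonical homotopy class.

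Granted the section, the fiber of $R^qf_*(\Omega^{\bullet}_{\mathcal{X}/\Delta})$ at $0$ has dimension $\dim_{\mathbb{C}} H^q_{\mathrm{dR}}(X/s)$, and upper semi-continuity combined with the Gauss--Manin connection lets one propagate a basis of flat sections from $0$ outward; flatness forces the rank to be locally constant and the stalks to be free, giving local freeness by a purely algebraic Nakayama argument with no appeal to Hodge theory. Invariance under a strict base change $\Delta' \lo \Delta$ then follows from the canonical comparison map, which is an isomorphism at $0$ by the section itself and at $t \neq 0$ by the classical base-change theorem for smooth proper maps. The hardest step will be constructing the section: without Frobenius to rigidify homotopies, one must rely on the analytic structure of $\Delta$ and the locally nilpotent connection to select a canonical lift in the derived category, and most of the technical weight of the argument should lie here.
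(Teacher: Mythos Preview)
The proof the paper records for this cited theorem is Steenbrink's original topological argument, not the derived-category section strategy you outline. Immediately after the statement the paper sketches it: compute the nearby cycle sheaf via $R\Psi({\mathbb C}) \simeq i^{-1}(\Omega^{\bullet}_{{\cal X}/{\mathbb C}}[\log z]) \simeq \Omega^{\bullet}_{X/s}$ using the universal cover ${\mathfrak H}\to\Delta^*$, deduce $H^q(\overline{{\cal X}^*},{\mathbb C}) \simeq H^q_{\rm dR}(X/s)$, conclude that $\dim H^q_{\rm dR}({\cal X}_u/{\mathbb C})$ is constant in $u\in\Delta$, and invoke Grauert. No section in a derived category, no Nakayama, no Gauss--Manin.

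What you describe is closer to the paper's \emph{own} alternative method, developed later in \S\ref{sec:m} for a broader class of families. But the heart of that method---the step you yourself flag as hardest---is made precise there by the Hirsch extension: one adjoins a polynomial variable $u$ to $\Omega^{\bullet}_{{\cal X}/{\mathbb C}}$ with $du \mapsto d\log t$, proves that the resulting augmentation $\Omega^{\bullet}_{X/{\mathbb C}}[u] \to \Omega^{\bullet}_{X/s}$ is a quasi-isomorphism, and obtains the section by sending $u \mapsto t$ into $\iota^{-1}\Omega^{\bullet}_{{\cal X}/\Delta}$. Your references to tubular embeddings, infinitesimal sites, and convergent power series do not supply such a mechanism, and without one there is no canonical way to split the projection. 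Separately, your local-freeness deduction via the Gauss--Manin connection is circular: you cannot parallel-transport a frame outward from $0$ before you know the sheaf is locally free there. Both Steenbrink and the paper use the identification of the fiber at $0$ only to show the fiber dimension does not jump, and then apply Grauert's criterion directly.
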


\parno
Set ${\cal X}^*:={\cal X}\setminus X$ 
and $\Del^*:=\Del \setminus \{O\}$. 
Let ${\mathfrak H}\lo \Del^*$ be the universal cover of $\Del^*$.  
Set $\ol{{\cal X}^*}:={\cal X}^*\times_{\Del^*}{\mathfrak H}$ 
and ${\cal X}_u:={\cal X}\times_{\Del}u$ for a point $u\in \Del^*$. 
Set $H^q_{\rm dR}({\cal X}_u/{\mab C}):=
H^q({\cal X}_u,\Om^{\bul}_{{\cal X}_u/{\mab C}})$ $(q\in {\mab N})$.  
The key point of the proof of (\ref{theo:xds}) is to prove that 
there exists the following isomorphism 
\begin{align*} 
H^q(\ol{{\cal X}^*},{\mab C})\os{\sim}{\lo} 
H^q_{\rm dR}(X/s)
\tag{1.1.1}\label{ali:xxc}
\end{align*} 
depending on a parameter $z$ of $\Del$. 
By using this isomorphism, we have an isomorphism 
\begin{align*} 
\rho_z \col H^q_{\rm dR}(X/s)
\os{\sim}{\lo} 
H^q({\cal X}_u,{\mab C})=H^q_{\rm dR}({\cal X}_u/{\mab C})
\tag{1.1.2}\label{ali:xtc}
\end{align*} 
depending on a parameter $z$ of $\Del$ for a point $u\in \Del^*$ 
and we obtain (\ref{theo:xds}) by Grauert's theorem (\cite{grh}) because 
$\dim_{\mab C}H^q_{\rm dR}(X/s)$ does not jump. 
We call the isomorphism (\ref{ali:xtc}) {\it Steenbrink's isomorphism}. 
%, 
%where $\ol{j} \col \wt{U}\lo {\cal X}$ is the composite morphism in the diagram above. 
%Then $H^q(\wt{\mathfrak X}},{\mab C})=H^q(X,R\Psi({\mab C})$ is isomorphic to 
Let $\ol{j}\col \ol{{\cal X}^*}\lo {\cal X}$ and $i\col X\os{\sus}{\lo} {\cal X}$ 
be the natural morphisms. Let 
$R\Psi({\mab C})=i^{-1}R\ol{j}_*({\mab C})$ be the nearby cycle sheaf
on $X$. To prove (\ref{ali:xxc}), Steenbrink has proved 
that there exists the following isomorphisms
\begin{align*}
R\Psi({\mab C})\os{\sim}{\lo} 
i^{-1}(\Om^{\bul}_{{\cal X}/{\mab C}}[\log z])\os{\sim}{\lo} \Om^{\bul}_{X/s}
\tag{1.1.3}\label{ali:xantc}
\end{align*}
 \cite{sti} (cf.~\cite{sga72})
depending on a parameter $z$ of $\Del$. 
Here $\Om^{\bul}_{{\cal X}/{\mab C}}[\log z]:={\mab C}[\log z]
\otimes_{\mab C}\Om^{\bul}_{{\cal X}/{\mab C}}$
is considered 
as a subdga of $\ol{j}_*(\Om^{\bul}_{\ol{{\cal X}^*}/{\mab C}})$.
The complex  
$\Om^{\bul}_{{\cal X}/{\mab C}}[\log z]$ 
is the Hirsch extension of 
$\Om^{\bul}_{{\cal X}/{\mab C}}$ 
by a vector space ${\mab C}\log z$ over ${\mab C}$ of rank 1 
with respect to a morphism 
$\varphi \col {\mab C}\log z \owns \log z \lom d\log z=z^{-1}dz
\in \Om^1_{{\cal X}/{\mab C}}=\Om^1_{\os{\circ}{\cal X}/{\mab C}}(\log \os{\circ}{X})$.  
The notion of the Hirsch extension by a vector space appears in the definition of 
the minimal model of a dga in Sullivan's theory (\cite{su}). 
%(See \cite{gm} and \cite{nhi} for the notion of Hirsch extension.)  
Conversely, if one knew (\ref{theo:xds}) a priori, one could construct 
a non-canonical isomorphism 
\begin{align*} 
H^q_{\rm dR}(X/s)
\os{\sim}{\lo} 
H^q_{\rm dR}({\cal X}_u/{\mab C})
\tag{1.1.4}\label{ali:xntc}
\end{align*} 
for a close point $u\in \Del^*$ from $O$ and any point $u\in \Del^*$. 
%which is independent of the choice of the parameter $z$. 
\par 
Let $f\col X\lo Y$ be a proper log smooth morphism of 
fs$($=fine and saturated$)$ log analytic spaces over ${\mab C}$. 
Assume that $Y$ is log smooth and that $f$ is exact. 
In \cite{kf} F.~Kato has proved that there exists the 
following canonical isomorphism 
\begin{align*} 
{\cal O}_{Y^{\log}}\otimes_{\mab C}Rf^{\log}_*({\mab C})
\os{\sim}{\lo} {\cal O}_{Y^{\log}}\otimes_{\eps^{-1}_Y({\cal O}_Y)}
\eps^{-1}_YRf_*(\Om^{\bul}_{X/Y}). 
\tag{1.1.5}\label{ali:osxky}
\end{align*} 
Here $X^{\log}$ and $Y^{\log}$ are 
the Kato-Nakayama spaces of $X$ and $Y$, respectively, defined in \cite{kn}, 
${\cal O}_{Y^{\log}}:={\cal O}_Y^{\log}$ in [loc.~cit.], 
$\eps_Y \col Y^{\log} \lo Y$ is the natural morphism of topological spaces 
defined in \cite{kn}, which has been denoted by $\tau$ in [loc.~cit.] 
and $f^{\log}\col X^{\log}\lo Y^{\log}$ is the induced morphism by $f$.  
Using the isomorphism (\ref{ali:osxky}) in the case where 
$Y$ is the log point of ${\mab C}$, 
F.~Kato has reproved the existence of the isomorphism 
(\ref{ali:xxc}) ([loc.~cit.]). 
\par 
In \cite{ikn} L.~Illusie, K.~Kato and C.~Nakayama
have obtained a more general theorem than the theorem (\ref{theo:xds}) 
by using the Kato-Nakayama space: 

\begin{theo}[{\rm \cite[(6.4)]{ikn}}]\label{theo:kikn}
Let $f\col X\lo Y$ be a proper log smooth morphism of 
fs$($=fine and saturated$)$ log analytic spaces over ${\mab C}$. 
Assume that $Y$ is log smooth or $f$ is exact. 
Assume also that ${\rm Coker}(\ol{M}_{f(x)}^{\rm gp}\lo \ol{M}_x^{\rm gp})$ 
is torsion free for any exact point $x\in X$. 
Let $\nabla \col {\cal F}\lo {\cal F}\otimes_{{\cal O}_X}\Om^{\bul}_{X/{\mab C}}$ 
be a locally nilpotent integrable connection on $X$ in the sense of {\rm \cite{kn}}. 
Let ${\cal F}\otimes_{{\cal O}_X}\Om^{\bul}_{X/Y}$ be the log de Rham complex 
obtained by $({\cal F},\nabla)$. 
Assume that any stalk of $\ol{M}_Y:=M_Y/{\cal O}_Y^*$ is isomorphic to 
the finitely many direct sum of ${\mab N}$. Then 
$R^qf_*({\cal F}\otimes_{{\cal O}_X}\Om^{\bul}_{X/Y})$ $(q\in {\mab N})$
is a locally free coherent ${\cal O}_Y$-module and commutes with base change. 
\end{theo}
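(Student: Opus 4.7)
The plan is to lift the entire problem to the Kato--Nakayama spaces $X^{\log}$ and $Y^{\log}$, establish the desired statement there as a purely topological result, and descend by faithful flatness. First I would reduce to a local situation: the hypothesis on the stalks of $\ol{M}_Y$ allows, after shrinking $Y$, the assumption that $Y$ is a polydisc with its standard toric log structure, so that $\eps_Y\col Y^{\log}\lo Y$ is a proper map with compact real-torus fibres and ${\cal O}_{Y^{\log}}$ is flat over $\eps_Y^{-1}({\cal O}_Y)$. By the log Riemann--Hilbert correspondence of \cite{kn}, $({\cal F},\nabla)$ corresponds to a local system ${\cal F}^{\log}$ of finite-dimensional ${\mab C}$-vector spaces on $X^{\log}$ together with a canonical isomorphism
\begin{equation*}
{\cal O}_{X^{\log}}\otimes_{\mab C}{\cal F}^{\log}\os{\sim}{\lo}
{\cal O}_{X^{\log}}\otimes_{\eps_X^{-1}({\cal O}_X)}\eps_X^{-1}({\cal F})
\end{equation*}
compatible with $\nabla$.

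Next, I would upgrade F.~Kato's isomorphism (\ref{ali:osxky}) to the coefficient case. The essential input is a relative log Poincar\'e lemma: on $X^{\log}$ the complex ${\cal O}_{X^{\log}}\otimes_{\eps_X^{-1}({\cal O}_X)}\eps_X^{-1}({\cal F}\otimes_{{\cal O}_X}\Om^{\bul}_{X/Y})$ should be a resolution of $(f^{\log})^{-1}({\cal O}_{Y^{\log}})\otimes_{\mab C}{\cal F}^{\log}$. Applying $Rf^{\log}_*$ together with the projection formula then yields the coefficient extension
\begin{equation*}
{\cal O}_{Y^{\log}}\otimes_{\mab C}Rf^{\log}_*({\cal F}^{\log})\os{\sim}{\lo}
{\cal O}_{Y^{\log}}\otimes_{\eps_Y^{-1}({\cal O}_Y)}\eps_Y^{-1}Rf_*({\cal F}\otimes_{{\cal O}_X}\Om^{\bul}_{X/Y})
\end{equation*}
of (\ref{ali:osxky}).

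Third, I would establish the topological counterpart: under the hypothesis that $Y$ is log smooth or $f$ is exact, together with the torsion-freeness of ${\rm Coker}(\ol{M}^{\rm gp}_{f(x)}\lo \ol{M}^{\rm gp}_x)$ at each exact $x$, the morphism $f^{\log}\col X^{\log}\lo Y^{\log}$ is a proper topological fibre bundle. Consequently $R^qf^{\log}_*({\cal F}^{\log})$ is a ${\mab C}$-local system on $Y^{\log}$ of finite rank and satisfies topological proper base change. Combined with the comparison of the previous step and with the flatness of $\eps_Y^{-1}({\cal O}_Y)\lo {\cal O}_{Y^{\log}}$, the constancy of the stalk dimensions of $R^qf^{\log}_*({\cal F}^{\log})$ along fibres of $\eps_Y$ forces $R^qf_*({\cal F}\otimes_{{\cal O}_X}\Om^{\bul}_{X/Y})$ to have locally constant fibre dimension as an ${\cal O}_Y$-module; a Grauert-type argument then upgrades this to local freeness, and the base-change claim is inherited from the topological proper base change for $f^{\log}$. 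The main obstacle is the log Poincar\'e lemma of the second step: extending it to a nontrivial locally nilpotent connection requires careful control of the residues of $\nabla$ along the log strata and of the monodromy of ${\cal F}^{\log}$, and the torsion-freeness hypothesis on the cokernels is precisely what makes both this acyclicity and the fibre-bundle property of $f^{\log}$ go through.
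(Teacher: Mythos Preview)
This theorem is not proved in the present paper: it is quoted from \cite[(6.4)]{ikn} as background, and the paper does not supply an independent argument for it in this generality. So there is no ``paper's own proof'' to compare your proposal against at the level of Theorem~\ref{theo:kikn}.

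That said, your outline is essentially the strategy of \cite{ikn} itself: pass to the Kato--Nakayama spaces, use the log Riemann--Hilbert correspondence to produce the isomorphism (\ref{ali:osxy}), invoke the fibre-bundle property of $f^{\log}$ (this is where the torsion-freeness hypothesis enters, cf.\ \cite{no}, \cite{kjn}), and descend by flatness of ${\cal O}_{Y^{\log}}$ over $\eps_Y^{-1}({\cal O}_Y)$. This is correct as a sketch of the original proof.

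What the paper \emph{does} prove is the special case Theorem~\ref{theo:lff} (and hence Corollary~\ref{coro:rqe}), by a completely different, purely algebraic route. The key idea there is the Hirsch extension: one constructs the section (\ref{ali:mnox}) via the diagram (\ref{cd:eoxd}), which gives a splitting of the restriction map to the log special fiber at the level of complexes. This splitting forces surjectivity of (\ref{ali:eexs}) at every point, and then Grauert's theorem yields local freeness. No use is made of $X^{\log}$, $Y^{\log}$, or topological base change. The trade-off is that the paper's method applies only to the semistable situation with horizontal SNCD, not to the general log-smooth setting of Theorem~\ref{theo:kikn}; in exchange one gets a proof that transports to algebraic and $p$-adic contexts and yields the finer invariance statements (\ref{theo:rnpi}), (\ref{coro:urm}).
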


\par
Let $V$ be a locally unipotent local system on $X^{\log}$ in the sense of 
{\rm \cite{kn}} corresponding to $({\cal F},\nabla)$ by the log Riemann-Hilbert correspondence in [loc.~cit.]: $V={\rm Ker}(\nabla \col  {\cal F}^{\log}\lo 
{\cal F}^{\log}\otimes_{{\cal O}_{X^{\log}}}\Om^{1,\log}_{X/{\mab C}})$. 
%Here $\eps_X \col X^{\log} \lo X$ is the natural morphism of topological spaces, 
%which has been denoted by $\tau$ in [loc.~cit.]. 
In [loc.~cit.] Illusie, K.~Kato and Nakayama 
have also proved that  
there exists a canonical isomorphism 
\begin{align*} 
{\cal O}_{Y^{\log}}\otimes^L_{\mab C}Rf^{\log}_*(V)
\os{\sim}{\lo} {\cal O}_{Y^{\log}}\otimes^L_{\eps^{-1}_Y({\cal O}_Y)}
\eps^{-1}_YRf_*({\cal F}\otimes_{{\cal O}_X}\Om^{\bul}_{X/Y}).  
\tag{1.2.1}\label{ali:osxy}
\end{align*}  
\par 
By using (\ref{theo:kikn}),  we see that, for each exact point 
$y\in Y$,  
%there exists a non-canonical section 
%$$\iota_{y*}R^qf_*({\cal F}\otimes_{{\cal O}_X}\Om^{\bul}_{X_y/y})\lo 
%R^qf_*({\cal F}\otimes_{{\cal O}_X}\Om^{\bul}_{X/Y})$$ 
%of the projection 
%$$R^qf_*({\cal F}\otimes_{{\cal O}_X}\Om^{\bul}_{X/Y})
%\lo \iota_{y*}R^qf_*({\cal F}\otimes_{{\cal O}_X}\Om^{\bul}_{X_y/y}),$$
%where $\iota_{y} \col y\os{\sus}{\lo} Y$ is the inclusion of $y$ into $Y$.  
%Furthermore, 
we have a non-canonical isomorphism 
$$H^q(X_y,{\cal F}\otimes_{{\cal O}_X}\Om^{\bul}_{X_y/y})
\os{\sim}{\lo} 
H^q(X_u,{\cal F}\otimes_{{\cal O}_X}\Om^{\bul}_{X_u/u})$$
for a sufficiently close exact point $u\in Y$ from $y$. 

\par 
On the other hand, the following $p$-adic case can be considered. 
%only in this introduction. 
\par 
Let ${\cal V}$ be a complete discrete valuation ring of 
mixed characteristics $(0,p)$ with perfect residue field $\kap$. 
Let $K$ be the fraction field of ${\cal V}$. 
Endow ${\rm Spec}({\cal V})$ 
with the log structure ${\cal V}\setminus \{0\}\os{\subset}{\lo}{\cal V}$
and let $S$ be the resulting log scheme. Let $s_{\kap}$ 
be the exact closed point of $S$.  
Let ${\cal X}$ be a proper log smooth scheme over $S$
whose log special fiber $X/s_{\kap}$ is of Cartier type. 
Let $\pi$ be a uniformizer of ${\cal V}$. 
Let ${\cal W}$ be the Witt ring of $\kap$ and $K_0$ the fraction field of ${\cal W}$. 
Let ${\cal W}(s_{\kap})$ be the canonical lift of $s_{\kap}$ 
over ${\rm Spf}({\cal W})$. 
Let ${\cal X}_K$ be the log generic fiber of ${\cal X}$ over $S$. 
In \cite{hk} Hyodo and K.~Kato have constructed the following isomorphism 
\begin{align*} 
\rho_{\pi} \col H^q_{\rm crys}(X/{\cal W}(s_{\kap}))\otimes_{\cal W}K
\os{\sim}{\lo} 
H^q_{\rm dR}({\cal X}_K/K)\quad (q\in {\mab N})
\tag{1.2.2}\label{ali:crxw}
\end{align*}
depending on the uniformizer $\pi$. (Because 
the proof of the existence of this isomorphism in [loc.~cit.] is not complete, 
see also \cite{ndw} and \cite{nb}.)
The key point of the proof of (\ref{ali:crxw}) is to prove that there exists 
a unique section 
\begin{align*} 
K_0\otimes^L_{\cal W}R\Gam_{\rm crys}(X/{\cal W})
\lo
K_0\otimes^L_{\cal W}R\Gam_{\rm crys}(X/\wh{{\cal W}\langle t\rangle})
\tag{1.2.3}\label{ali:canc}
\end{align*}
of the projection 
\begin{align*} 
K_0\otimes^L_{\cal W}R\Gam_{\rm crys}(X/\wh{{\cal W}\langle t\rangle})
\lo
K_0\otimes^L_{\cal W}R\Gam_{\rm crys}(X/{\cal W})
\end{align*} 
which induces the following isomorphism: 
\begin{align*} 
K_0\otimes^L_{\cal W}
\wh{{\cal W}\langle t\rangle} \otimes^L_{\cal W}R\Gam_{\rm crys}(X/{\cal W})
\os{\sim}{\lo}
K_0\otimes_{\cal W}^LR\Gam_{\rm crys}(X/\wh{{\cal W}\langle t\rangle}). 
\tag{1.2.4}\label{ali:cdrxw}
\end{align*}
Here ${\cal W}\langle t\rangle$ is the PD-polynomial algebra in one variable 
over ${\cal W}$ and $\wh{\quad}$ means the $p$-adic completion. 
The $n$-th power of the Frobenius endomorphism on 
$${\rm ``}{\rm Ker}(R\Gam_{\rm crys}(X/\wh{{\cal W}\langle t\rangle})\lo 
R\Gam_{\rm crys}(X/\wh{\cal W})){\textrm "}$$ tends to a zero morphism for $n\to \infty$, 
while 
the $n$-th power of the Frobenius endomorphism on 
$K_0\otimes^L_{\cal W}R\Gam_{\rm crys}(X/{\cal W})$ 
is an isomorphism for all $n\in {\mab N}$. 
By this fact, we obtain the canonical section 
(\ref{ali:canc}) (as in the case of the Teichm\"{u}ller lift) with complicated calculations. 
\par 
By modifying $\rho_{\pi}$, 
we have constructed a canonical contravariantly functorial isomorphism 
\begin{align*} 
\rho_p \col H^q_{\rm crys}(X/{\cal W}(s_{\kap}))\otimes_{\cal W}K
\os{\sim}{\lo} 
H^q_{\rm dR}({\cal X}_K/K) \quad (q\in {\mab N})
\tag{1.2.5}\label{ali:ckrxw}
\end{align*}
which is independent of the choice of $\pi$ (\cite{nb}). 
We use the prime number $p\in {\cal V}$ 
and the isomorphism ``$\rho_p$'' in spite that 
$p$ is not necessarily a uniformizer of ${\cal V}$. 
The isomorphism $\rho_p$ is compatible with any algebraic extension of $K$ ([loc.~cit.]). 
We call this isomorphism the {\it Hyodo-Kato isomorphism}. 
In [loc.~cit.] we have conjectured that
%However we do not know whether 
there exists a canonical isomorphism 
\begin{align*} 
\rho_p \col R\Gam_{\rm crys}(X/{\cal W}(s_{\kap}))\otimes^L_{\cal W}K
\os{\sim}{\lo} 
R\Gam_{\rm dR}({\cal X}_K/K)
\tag{1.2.6}\label{ali:ckraxw}
\end{align*}
in the derived category of bounded complexes of $K$-vector spaces, which we call  
the {\it derived} Hyodo-Kato isomorphism (cf.~\cite{bei}). 
%though we have conjectured this existence (\cite[Conjecture 0.1.3]{nb}). 
Recently Ertl and Yamada have proved this conjecture in an ingenious method 
when ${\cal X}$ is a proper strictly semistable scheme over $S$ 
(\cite{ey}). They have proved the existence of the derived Hyodo-Kato isomorphism 
(\ref{ali:ckraxw}) 
in this case by using the Hirsch extension.  
%and the completed Hirsch extension. 
%They have noticed the big difference of the Hirsch extension 
%and the completed one in general, though they are equal in a certain case. 
\par 
Now let us come back to the case where the base field is ${\mab C}$. 
It is a natural problem to ask whether there exists a canonical isomorphism 
between $R\Gam_{\rm dR}(X/s)$ and $R\Gam({\cal X}_u,{\mab C})
=R\Gam_{\rm dR}({\cal X}_u/{\mab C})$ for a sufficiently 
close point $u\in \Del^*$ from $O$. 
Let $\iota_O \col \{O\} \os{\subset}{\lo} \Del$ and $\iota \col X\os{\sus}{\lo} {\cal X}$ 
be the closed immersions. 
%Let $p\col \Del \lo {\rm Spec}({\mab C})^{\rm an}$ be the projection. 
Though we do not know whether there exists a canonical contravariantly functorial 
section 
\begin{align*}
R\Gam_{\rm dR}(X/s)\lo R\Gam({\cal X},\Om^{\bul}_{{\cal X}/\Del})
\tag{1.2.7}\label{ali:dndrxa}
\end{align*} 
of the natural morphism 
$R\Gam({\cal X},\Om^{\bul}_{{\cal X}/\Del})\lo R\Gam_{\rm dR}(X/s)$ 
(this seems us impossible), 
we construct a canonical contravariantly functorial section 
\begin{align*} 
\iota_{O*}R\Gam_{\rm dR}(X/s)\lo 
Rf_*\iota_*\iota^{-1}(\Om^{\bul}_{{\cal X}/\Del})
\tag{1.2.8}\label{ali:dxda}
\end{align*} 
of the natural morphism 
$Rf_*\iota_*\iota^{-1}(\Om^{\bul}_{{\cal X}/\Del})\lo 
\iota_{O*}R\Gam_{\rm dR}(X/s)$ induced by 
the natural morphism 
\begin{align*} 
\iota^{-1}(\Om^{\bul}_{{\cal X}/\Del})\lo 
\iota^{!}(\Om^{\bul}_{{\cal X}/\Del})=\Om^{\bul}_{X/s}
\tag{1.2.9}\label{ali:dxsda}
\end{align*} 
as an example of a more general section without using the topological spaces 
$\ol{{\cal X}^*}$ nor $X^{\log}$. 
(In fact, the morphism (\ref{ali:dxsda}) is a quasi-isomorphism by 
\cite{sti}.)
%However 
Moreover, if $u\in \Del^*$ is sufficiently close from $O$, 
then we prove that there exists a canonical isomorphism 
\begin{align*} 
\rho \col H^q_{\rm dR}(X/s)\os{\sim}{\lo} H^q_{\rm dR}({\cal X}_u/{\mab C})
%R\Gam_{\rm dR}(X/s)\os{\sim}{\lo}
%R\Gam_{\rm dR}({\cal X}_u/\{u\}) 
\quad (q\in {\mab N})
\tag{1.2.10}\label{ali:dxa}
\end{align*} 
without using $\ol{{\cal X}^{\log}}$ nor ${\cal X}^{\log}$.  
%by using ${\cal X}^{\log}$ at only two points:  
%we use Usui's main result in \cite{ur1} which tells us that 
%${\cal X}^{\log} \lo \Del^{\log}$ is a fiber bundle and we use 
%the log Riemann-Hilbert correspondence in \cite{kn} and \cite{ikn}. 
%By using the isomorphism (\ref{ali:dxa}) 
%we can construct a canonical isomorphism 
%\begin{align*} 
%R\Gam_{\rm dR}(X/s)\os{\sim}{\lo}
%R\Gam_{\rm dR}({\cal X}_u/\{u\}). 
%\tag{1.2.9}\label{ali:dxda}
%\end{align*} 
%(We can generalize the isomorphisms (\ref{ali:dxa}) and 
%(\ref{ali:dxda}) for any point $u\in \Del^*$ by connecting 
%any $u$ with a close point of $\Del^*$ by a segment.)
%in a certain sense (see the text for this in detail). 
Let $M_s$ be the log structure of $s$. 
The section (\ref{ali:dxda}) is independent of 
the choice of a parameter $z$ of $\Del$; 
the isomorphism (\ref{ali:dxa}) is also independent of 
the choice of a parameter $z$ of $\Del$ in our sense:  
we prepare a parameter $z$ of $\Del$ according to 
a global section of the log structure of $s$ whose image in 
$\ol{M}_s:=M_s/{\cal O}_s^*={\mab N}$ is the generator. 
The choice of $z$ is determined up to ${\mab C}^*$ not ${\cal O}_{\Del}^*$.
If we replace $z$ by $cz$ for $c\in {\mab C}^*$, 
then the isomorphism (\ref{ali:dxa}) does {\it not} change. 
One can consider a similar isomorphism to (\ref{ali:dxa}) by using 
a convergent power series $\al(z)$ around $\{O\}$ instead of $z$, 
then the similar isomorphism changes by ${\rm exp}(-\al(0)N)$, where $N\col 
H^q_{\rm dR}(X/s)\lo H^q_{\rm dR}(X/s)(-1)$ is the monodromy operator. 
The existence of the canonical section (\ref{ali:dxda}) 
and the canonical isomorphism (\ref{ali:dxa}) have not 
been conjectured. 
The hope in \cite[0.2.23]{ku} that the theory of log geometry 
in the sense of Fontaine-Illusie-Kato is useful 
for the case where the base field is of characteristic $0$ 
as in the case of characteristic $p>0$ or mixed characteristics 
comes true in our setting. 
%(See the text for this in detail.)
%Let $M_s$ be the log structure of $s$. 
%The isomorphism (\ref{ali:dxa}) is independent of 
%the choice of a parameter $z$ of $\Del$ in our sense.
%That is,  we prepare a parameter $z$ of $\Del$ according to 
%a global section of the log structure of $s$ whose image in 
%$\ol{M}_s:=M_s/{\cal O}_s^*={\mab N}$ is the generator; 
%the choice of $z$ is determined up to ${\mab C}^*$ not on ${\cal O}_{\Del}^*$.
To construct the section (\ref{ali:dxda}), 
we do {\it not} calculate the nearby cycle sheaf 
$R\Psi({\mab C})$ (especially we do not use the space $\ol{{\cal X}^*}$). 
We use only the Hirsch extension of a dga 
as in \cite{ey} (and \cite{nhi}) by believing in a philosophy 
that dga's have informations of geometry (modulo torsion). 
Though $\iota_{O*}R\Gam_{\rm dR}(X/s)$ has only the information of the log special fiber, 
$Rf_*\iota_*\iota^{-1}(\Om^{\bul}_{{\cal X}/\Del})$ has an information of 
the general fiber of ${\cal X}/\Del$ near $X/s$.  
We have constructed a canonical lift to  
$Rf_*\iota_*\iota^{-1}(\Om^{\bul}_{{\cal X}/\Del})$ from $\iota_{O*}R_{\rm dR}(X/s)$.   
Since 
the topology of a (log) analytic space 
is finer than the Zariski topology of an algebraic scheme and since 
the different points in the analytic setting are more far than the different points 
in the algebraic case, it remains a little surprising for us.
\par 
Our method is nothing but a log analytic version of 
Ertl-Yamada's one in the log rigid analytic case (\cite{ey}). 
Though ${\cal O}_{\Del}$ is not complete unlike in the $p$-adic case, 
we heavily depend on their idea in this article. 
We show that their method is also applicable 
for the log analytic case over ${\mab C}$ by paying careful attentions to 
the proof of the existence of the canonical section (\ref{ali:dxda}) and 
the canonical isomorphism (\ref{ali:dxa}). 
(These attentions are not necessary in the $p$-adic case because the underlying scheme 
${\rm Spec}({\cal V})$ of the base log scheme is complete and we have only to 
consider (log) formal schemes by Grothendieck's formal function theorem.)  
Though we do not use any result in  \cite{ey} and \cite{nb} in this article, 
we are psychologically supported by the existence of the 
Hyodo-Kato section (\ref{ali:canc}).  
However, even if $u\in \Del^*$ is sufficiently close from $O$, 
we do not know whether there exists 
a canonical isomorphism 
\begin{align*} 
R\Gam_{\rm dR}(X/s)\os{\sim}{\lo}
R\Gam_{\rm dR}({\cal X}_u/\{u\}).  
\tag{1.2.11}\label{ali:dnnxa}
\end{align*}

We prove the following as a special case of our result: 

\begin{theo}\label{theo:rpi}
Let $r_1$ be a positive integer and let $r_2$ be a nonnegative integer. 
Set $B:=\Del^{r_1}\times \os{\circ}{\Del}{}^{r_2}$.  
%$($the $r$-times direct product of $\Del)$. 
Let $f\col {\cal X}\lo B$ be a $($not necessarily proper$)$ 
$($not necessarily strictly$)$-semistable analytic space  
with horizontal NCD$($=normal crossing divisor$)$ ${\cal D}$. 
$($See the text for the notion of the semistable analytic space.  
%$($here  stands for ``locally product''$)$.$)$
%Let $V$ be a local system on $\os{\circ}{\cal X}$ of 
%finite dimensional vector spaces over ${\mab C}$. 
%Set ${\cal E}:=V\otimes_{\mab C}{\cal O}_{\cal X}$ and 
%$\nabla:={\rm id}_V\otimes_{\mab C}d \col 
Let $({\cal E},\nabla)$ be a locally nilpotent integrable connection 
on $({\cal X},{\cal D})$ in the sense of {\rm \cite{kn}:} $\nabla \col {\cal E}\lo  
{\cal E}\otimes_{{\cal O}_{\cal X}}\Om^1_{{\cal X}/{\mab C}}(\log {\cal D})$. 
Let ${\cal E}\otimes_{{\cal O}_{\cal X}}\Om^{\bul}_{{\cal X}/B}(\log {\cal D})$ 
be the log de Rham complex obtained by 
$({\cal E},\nabla)$. Let $\{O\}$ be the origin of $B$. 
Endow $\{O\}$ with the pull-back of the log structure of $B$. 
Let $s$ be the resulting log analytic space. 
Set $S:=s\times \os{\circ}{\Del}{}^{r_2}$, 
$X:={\cal X}\times_BS$ and $D:={\cal D}\times_BS$. 
Let 
%$\iota_S \col S\os{\sus}{\lo} B$ and 
$\iota \col X\os{\sus}{\lo} {\cal X}$ be the natural exact closed immersion. 
Set $f_S:=f\circ \iota \col X\lo B$. 
%Let $f_S\col X\lo S$ be the structural morphism. 
%Let ${\cal E}$ be a locally free coherent ${\cal O}_{\cal X}$-module. 
%Let $\nabla \col {\cal E}\lo {\cal E}\otimes_{{\cal O}_{\cal X}}
%\Om^1_{{\cal X}/B}(\log {\cal D})$ 
%be an integrable connection which is a locally successive extension of 
%integrable connections without log poles along $S$. 
%$($See the text for the precise definition of this successive extension.$)$
Then there exists a canonical contravariantly functorial section
\begin{align*} 
{\cal E}\otimes_{{\cal O}_{\cal X}}\Om^{\bul}_{X/S}(\log D)
\lo 
\iota^{-1}({\cal E}\otimes_{{\cal O}_{\cal X}}\Om^{\bul}_{{\cal X}/B}(\log {\cal D}))
\tag{1.3.1}\label{ali:mocx}
\end{align*}
of the natural morphism 
\begin{align*}
\iota^{-1}({\cal E}\otimes_{{\cal O}_{\cal X}}\Om^{\bul}_{{\cal X}/B}(\log {\cal D}))
\lo 
{\cal E}\otimes_{{\cal O}_{\cal X}}\Om^{\bul}_{X/S}(\log D)
\tag{1.3.2}\label{ali:moomx}
\end{align*} 
in the derived category $D^+(\iota^{-1}f^{-1}({\cal O}_B))$ of 
bounded below complexes of $\iota^{-1}f^{-1}({\cal O}_B)$-modules. 
Consequently there exists a canonical contravariantly functorial section 
\begin{align*} 
%\iota_{S*}
Rf_{S*}({\cal E}\otimes_{{\cal O}_{\cal X}}\Om^{\bul}_{X/S}(\log D))
\lo 
Rf_{S*}\iota^{-1}({\cal E}\otimes_{{\cal O}_{\cal X}}\Om^{\bul}_{{\cal X}/B}(\log {\cal D}))
\tag{1.3.3}\label{ali:mox}
\end{align*}
of the natural morphism 
\begin{align*}
Rf_{S*}\iota^{-1}({\cal E}\otimes_{{\cal O}_{\cal X}}\Om^{\bul}_{{\cal X}/B}(\log {\cal D}))
\lo 
Rf_{S*}({\cal E}\otimes_{{\cal O}_{\cal X}}\Om^{\bul}_{X/S}(\log D)) 
\tag{1.3.4}\label{ali:moox}
\end{align*} 
%Consequently  
%$R^qf_*({\cal E}\otimes_{{\cal O}_{\cal X}}\Om^{\bul}_{{\cal X}/B}(\log {\cal D}))$
%$(q\in {\mab N})$ is a locally free coherent ${\cal O}_B$-module and 
%commutes with base change.
in the derived category $D^+({\cal O}_B)$ of the bounded below complexes
of ${\cal O}_B$-modules. 
%This section induces the following isomorphisms  
%\begin{align*} 
%{\cal O}_B\otimes^L_{{\cal O}_S}
%Rf_{S*}({\cal E}\otimes_{{\cal O}_{\cal X}}\Om^{\bul}_{X/S}(\log D))
%\os{\sim}{\lo} 
%Rf_*({\cal E}\otimes_{{\cal O}_{\cal X}}\Om^{\bul}_{{\cal X}/B}(\log {\cal D}))
%\tag{1.3.5}\label{ali:mxbox}
%\end{align*} 
%and 
%\begin{align*} 
%{\cal O}_B\otimes_{{\cal O}_S}
%R^qf_{S*}({\cal E}\otimes_{{\cal O}_{\cal X}}\Om^{\bul}_{X/S}(\log D))
%\os{\sim}{\lo} 
%R^qf_*({\cal E}\otimes_{{\cal O}_{\cal X}}\Om^{\bul}_{{\cal X}/B}(\log {\cal D}))
%\quad (q\in {\mab N})
%\tag{1.3.6}\label{ali:mxqbox}
%\end{align*} 
%around $S$ if $\os{\circ}{\cal X}$ is proper over $\os{\circ}{B}$. 
\end{theo}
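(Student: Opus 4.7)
The plan is to construct the section in the derived category by interposing an auxiliary Hirsch-extended complex between the two sides, in the dga spirit of Ertl-Yamada's construction in the log rigid analytic case (\cite{ey}) and paralleling Steenbrink's dga $\Om^{\bul}_{{\cal X}/{\mab C}}[\log z]$ of (\ref{ali:xantc}). First I fix parameters $z_1,\dots,z_{r_1}$ of the $\Del^{r_1}$-factor of $B$ adapted to generators of the stalk $\ol M_{s,O}\simeq{\mab N}^{r_1}$ of the log structure, so that each $z_i$ is canonically determined up to multiplication by an element of ${\mab C}^*$. I then introduce the Hirsch extension of the ``semi-relative'' log de Rham complex, namely
\[
{\cal H}^{\bul}\;:=\;\iota^{-1}\bigl({\cal E}\otimes_{{\cal O}_{\cal X}}\Om^{\bul}_{{\cal X}/\os{\circ}{\Del}{}^{r_2}}(\log {\cal D})\bigr)\otimes_{\mab C}{\mab C}[u_1,\dots,u_{r_1}],
\]
with each $u_i$ placed in degree $0$ and with differential extended by the Leibniz rule together with $du_i:=\iota^{-1}(d\log z_i)$; the $u_i$ play the role of formal logarithms.

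Two natural chain maps are produced from ${\cal H}^{\bul}$. The first, $\pi\col{\cal H}^{\bul}\lo\iota^{-1}({\cal E}\otimes_{{\cal O}_{\cal X}}\Om^{\bul}_{{\cal X}/B}(\log{\cal D}))$, sends $u_i\mapsto 0$ and then composes with the canonical quotient $\Om^{\bul}_{{\cal X}/\os{\circ}{\Del}{}^{r_2}}\to\Om^{\bul}_{{\cal X}/B}$ that kills $d\log z_i$; the key point is that although the semi-relative complex is only $f^{-1}{\cal O}_{\os{\circ}{\Del}{}^{r_2}}$-linear (its differential does not annihilate $dz_i$), the target of $\pi$ is honestly $f^{-1}{\cal O}_B$-linear, and adjoining the $u_i$ with $du_i=d\log z_i$ is precisely the mechanism that makes $\pi$ into a morphism in the category of complexes of $\iota^{-1}f^{-1}{\cal O}_B$-modules. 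The second, $\sigma\col{\cal H}^{\bul}\lo{\cal E}\otimes_{{\cal O}_{\cal X}}\Om^{\bul}_{X/S}(\log D)$, is the composition of $\pi$ with the base-change quotient (\ref{ali:moomx}), so that $\sigma=(\ref{ali:moomx})\circ\pi$ tautologically.

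The crux of the argument is to prove that $\sigma$ is a quasi-isomorphism in $D^+(\iota^{-1}f^{-1}({\cal O}_B))$. This is a local question; in an analytic chart of the semistable family in which $z_i=\prod_{j\in I_i}t_j$ for disjoint subsets $I_i$ of local coordinates $t_1,\dots,t_n$ on ${\cal X}$, and ${\cal D}$ is cut out by additional $t_k=0$, the complex ${\cal H}^{\bul}$ becomes a graded-commutative Koszul-type complex in the symbols $\{d\log t_j,dt_k,u_i\}$. A spectral sequence associated to the filtration of ${\cal H}^{\bul}$ by $u$-degree, whose $E_1$-page is determined by a Cartier-type identification of the cohomology of $\iota^{-1}\Om^{\bul}_{{\cal X}/\os{\circ}{\Del}{}^{r_2}}(\log{\cal D})$, together with the local nilpotence of $\nabla$ on ${\cal E}$ (which allows one to pass the coefficient through the local resolution), identifies $H^*({\cal H}^{\bul})$ with $H^*({\cal E}\otimes_{{\cal O}_{\cal X}}\Om^{\bul}_{X/S}(\log D))$. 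The careful bookkeeping needed to patch this local calculation globally, to handle the coefficients ${\cal E}$, and to reconcile the asymmetric ${\cal O}_B$-linearity of the various complexes is what I expect to be the principal technical obstacle.

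Granted that $\sigma$ is a quasi-isomorphism, the section (\ref{ali:mocx}) is then defined as $\pi\circ\sigma^{-1}$ in $D^+(\iota^{-1}f^{-1}({\cal O}_B))$, and it is tautologically a section of (\ref{ali:moomx}) since $(\ref{ali:moomx})\circ(\pi\circ\sigma^{-1})=\sigma\circ\sigma^{-1}={\rm id}$. Independence of the ${\mab C}^*$-ambiguity in each $z_i$ is immediate because a rescaling $z_i\mapsto c_iz_i$ leaves $d\log z_i=du_i$ unchanged and hence leaves ${\cal H}^{\bul}$ and the maps $\pi,\sigma$ unaffected. Contravariant functoriality in $({\cal X},{\cal D},{\cal E},\nabla,B)$ follows from the naturality of the semi-relative log de Rham complex and of the Hirsch extension. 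Finally, applying $Rf_{S*}$ to (\ref{ali:mocx}) yields the induced section (\ref{ali:mox}) of (\ref{ali:moox}) in $D^+({\cal O}_B)$, completing the construction.
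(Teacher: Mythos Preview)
Your approach is essentially the one the paper takes: the section is built by interposing the Hirsch extension $\mathcal H^\bullet=\iota^{-1}(\mathcal E\otimes\Omega^\bullet_{\mathcal X/\overset{\circ}{S}}(\log\mathcal D))[U_S]$, constructing the projection $\pi$ to $\iota^{-1}(\mathcal E\otimes\Omega^\bullet_{\mathcal X/B}(\log\mathcal D))$, proving that the composite $\sigma=(\ref{ali:moomx})\circ\pi$ is a quasi-isomorphism, and then setting $\rho=\pi\circ\sigma^{-1}$. This is exactly the content of Theorem~\ref{theo:tesc} and the diagram (\ref{cd:eoxd}); the paper also passes through the completed extension $[[U_S]]$, but this is inessential for the construction of the section.

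Two points in your write-up deserve correction.

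First, your argument that $\sigma$ is a quasi-isomorphism (``a spectral sequence on the $u$-degree filtration whose $E_1$-page is determined by a Cartier-type identification'') is vague and does not match the paper. The paper factors $\sigma$ as the composite of two quasi-isomorphisms: (i) $\iota^{-1}(\mathcal E\otimes\Omega^\bullet_{\mathcal X/\overset{\circ}{S}}(\log\mathcal D))[U_S]\to(\mathcal E\otimes\Omega^\bullet_{X/\overset{\circ}{S}}(\log D))[U_S]$, which is the Hirsch extension of Lemma~\ref{lemm:cn} (the Steenbrink-type Koszul argument that $\iota^{-1}(t_1\cdots t_r\,\Omega^\bullet)$ is acyclic because $z_i\partial/\partial z_i+1$ is bijective), using that Hirsch extension preserves quasi-isomorphisms (Corollary~\ref{coro:sqi}); and (ii) $(\mathcal E\otimes\Omega^\bullet_{X/\overset{\circ}{S}}(\log D))[U_S]\to\mathcal E\otimes\Omega^\bullet_{X/S}(\log D)$, which is Theorem~\ref{theo:afp} imported from \cite{nf}. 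No Cartier-type identification appears; the local nilpotence of $\nabla$ enters only in step (ii). Your spectral-sequence sketch could perhaps be massaged into this, but as written it blurs two logically separate reductions.

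Second, and more seriously, your claim that ``adjoining the $u_i$ with $du_i=d\log z_i$ is precisely the mechanism that makes $\pi$ into a morphism in the category of complexes of $\iota^{-1}f^{-1}\mathcal O_B$-modules'' is false. The Hirsch differential on $\mathcal H^\bullet$ is only $\mathcal O_{\overset{\circ}{S}}$-linear, not $\mathcal O_B$-linear: for $g=z_i\in\mathcal O_B$ one has $d(g\omega)=dz_i\wedge\omega+g\,d\omega$ in $\Omega^\bullet_{\mathcal X/\overset{\circ}{S}}$, and $dz_i\neq0$ there. The role of the Hirsch extension is \emph{not} to upgrade linearity; it is to furnish a complex that simultaneously projects (via $u_i\mapsto0$, which is compatible with $d\log z_i\mapsto0$ in $\Omega^\bullet_{\mathcal X/B}$) onto the $B$-relative complex and is quasi-isomorphic to the $S$-relative complex on $X$. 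The paper is explicit that the boundary morphism (\ref{eqn:bdff}) of the Hirsch extension is $\mathcal O_S$-linear only; the zig-zag producing $\rho$ therefore lives a priori over the smaller ring, with both endpoints being complexes of $\iota^{-1}f^{-1}(\mathcal O_B)$-modules. You should remove or rewrite that paragraph.
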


\par 
%Consider the case $r_2=0$. 
%Assume that $\os{\circ}{\cal X}$ is proper over $\os{\circ}{B}$. 
%Set ${\cal U}:={\cal X}\setminus {\cal D}$. 
%Let $u$ be any exact point of $B$ whose log structure is trivial. 
%As a corollary of the isomorphism (\ref{ali:mxqbox}), we obtain 
%a canonical contravariantly functorial isomorphism 
%\begin{align*} 
%R\Gam(X,{\cal E}\otimes_{{\cal O}_{\cal X}}\Om^{\bul}_{X/s}(\log D))
%\os{\sim}{\lo} 
%R\Gam({\cal U}_u,{\cal E}\otimes_{{\cal O}_{\cal X}}\Om^{\bul}_{{\cal U}_u/u}). 
%\tag{1.3.7}\label{ali:matbox}
%\end{align*} 
%if $u$ is close from $O$.  
%If $({\cal E}\vert_{{\cal U}_u},\nabla)
%=({\cal O}_{{\cal U}_u}\otimes_{\mab C}V,d\otimes {\rm id}_V)$ 
%for a local system $V$ on ${\cal U}_u$ of 
%finite dimensional vector spaces over ${\mab C}$, 
%then we obtain a canonical contravariantly functorial isomorphism 
%\begin{align*} 
%R\Gam(X,{\cal E}\otimes_{{\cal O}_{\cal X}}\Om^{\bul}_{X/s}(\log D))
%\os{\sim}{\lo} R\Gam({\cal U}_u,V). 
%\tag{1.3.8}\label{ali:matox}
%\end{align*} 
%This is a generalization of the isomorphism (\ref{ali:dxa}). 
To prove the theorem (\ref{theo:rpi}), we use the theory of log geometry 
essentially and the Hirsch extension of a dga,  
more generally the Hirsch extension of a log integrable connection. 
As a corollary of (\ref{theo:rpi}), we prove the following corollary:  

\begin{coro}[{\rm cf.~\cite{ur2}}]\label{coro:rqe}
%Let the notations be as in {\rm (\ref{theo:rpi})}.
Let $\os{\circ}{\cal Y}$ be a smooth analytic space with an NCD $Y$. 
Let ${\cal Y}$ be the log analytic space obtained by a pair $(\os{\circ}{\cal Y},Y)$. 
Let $\os{\circ}{f}\col \os{\circ}{\cal X}\lo \os{\circ}{\cal Y}$ be 
a proper morphism of smooth schemes over $K$. 
Assume that $X:=f^*(Y)$ is a normal crossing divisor on $\os{\circ}{\cal X}$. 
Let ${\cal X}$ be the log analytic space obtained by a pair $(\os{\circ}{\cal X},X)$. 
Let $f\col {\cal X}\lo {\cal Y}$ be a proper strictly semistable morphism. 
Let ${\cal D}$ be a horizontal NCD on ${\cal X}/{\cal Y}$. 
%Assume that the structural morphism 
%$\os{\circ}{f}\col \os{\circ}{\cal X}\lo \os{\circ}{B}$ is proper. 
%Assume also that $f\col {\cal X}\lo B$ is a generalized strictly semistable morphism. 
Let $({\cal E},\nabla)$ be a locally nilpotent integrable connection 
on $({\cal X},{\cal D})$ in the sense of {\rm \cite{kn}:} $\nabla \col {\cal E}\lo  
{\cal E}\otimes_{{\cal O}_{\cal X}}\Om^1_{{\cal X}/{\mab C}}(\log {\cal D})$. 
Let ${\cal E}\otimes_{{\cal O}_{\cal X}}\Om^{\bul}_{{\cal X}/B}(\log {\cal D})$ 
be the log de Rham complex obtained by 
$({\cal E},\nabla)$. 
Then 
$R^qf_*({\cal E}\otimes_{{\cal O}_{\cal X}}\Om^{\bul}_{{\cal X}/B}(\log {\cal D}))$
$(q\in {\mab N})$ is a coherent locally free ${\cal O}_{\cal Y}$-module and 
commutes with base change.
Here, for a morphism $\os{\circ}{\cal Z}\lo \os{\circ}{\cal Y}$, we have endowed 
$\os{\circ}{\cal Z}$ with the inverse image of the log structure of ${\cal Y}$. 
\end{coro}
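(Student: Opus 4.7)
The conclusion is local on ${\cal Y}$ and invariant under strict base change, so I may shrink ${\cal Y}$ to an arbitrarily small neighbourhood of any chosen point $y \in \os{\circ}{\cal Y}$. Since $Y$ is a normal crossing divisor on the smooth analytic space $\os{\circ}{\cal Y}$, such a neighbourhood can be chosen biholomorphic to a polydisc $B := \Del^{r_1}\times \os{\circ}{\Del}{}^{r_2}$ equipped with its canonical log structure associated with the coordinate NCD $\{z_1\cdots z_{r_1}=0\}$, where $r_1$ is the number of analytic branches of $Y$ through $y$ and $r_1+r_2=\dim \os{\circ}{\cal Y}$. Restricted to $B$, the morphism $f$ is proper strictly semistable with horizontal NCD ${\cal D}$ and $({\cal E},\nabla)$ is a locally nilpotent log integrable connection, so Theorem~\ref{theo:rpi} applies to $f|_B\colon {\cal X}|_B\lo B$ and supplies the canonical section~(\ref{ali:mox}).

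Following the strategy announced in the Introduction and modelled on Ertl-Yamada~\cite{ey}, the plan is to deduce from this section a canonical isomorphism, in a sufficiently small neighbourhood of the origin $O\in B$, of the form
\begin{equation*}
{\cal O}_B\otimes_{\mab C}R\Gam_{\rm dR}(X_0/s_0)\os{\sim}{\lo} Rf_*\bigl({\cal E}\otimes_{{\cal O}_{\cal X}}\Om^{\bul}_{{\cal X}/B}(\log {\cal D})\bigr),
\end{equation*}
where $X_0/s_0$ denotes the log special fibre of $f$ over the origin of $B$. Granting this, passing to $q$-th cohomology yields $R^qf_*(\cdots)\cong {\cal O}_B\otimes_{\mab C}H^q_{\rm dR}(X_0/s_0)$ in a neighbourhood of every point of ${\cal Y}$; the right-hand side is manifestly a locally free coherent ${\cal O}_B$-module (finite-dimensionality of the cohomology on the closed fibre being furnished, for example, by Theorem~\ref{theo:kikn}), and its formation commutes with any strict base change. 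This yields the corollary at once.

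The morphism itself is a formal consequence of the section of Theorem~\ref{theo:rpi} together with multiplication by ${\cal O}_B$, and its reduction modulo the maximal ideal at $O$ is the identity on $R\Gam_{\rm dR}(X_0/s_0)$ by the defining property of the section. The main obstacle is to show that it is a quasi-isomorphism in a polydisc of positive radius around $O$. Unlike in the $p$-adic setting of~\cite{ey}, the ring ${\cal O}_B$ is not complete for the ideal $(z_1,\ldots,z_{r_1})$, so one cannot argue by the standard lifting through Nakayama's lemma for complete local rings. Instead, as stressed at the end of Section~1.2 of the Introduction, the argument requires a careful convergence analysis of the Hirsch extension data underlying Theorem~\ref{theo:rpi}: one must verify that the symmetric-power expansions in $\log z_1,\ldots,\log z_{r_1}$ entering the section converge on a common open polydisc, and then apply a coherent-sheaf version of Nakayama on that polydisc to promote the pointwise isomorphism at $O$ to a genuine quasi-isomorphism of complexes of coherent ${\cal O}_B$-modules.
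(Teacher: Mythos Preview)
Your localization to a polydisc $B=\Del^{r_1}\times\os{\circ}{\Del}{}^{r_2}$ is correct, and invoking the section of Theorem~\ref{theo:rpi} is the right start. But the subsequent strategy reverses the paper's logical order and leaves the essential step unproved.

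You aim to first establish the invariance isomorphism
\[
{\cal O}_B\otimes_{\mab C}R\Gam_{\rm dR}(X_0/s_0)\;\os{\sim}{\lo}\;Rf_*\bigl({\cal E}\otimes_{{\cal O}_{\cal X}}\Om^{\bul}_{{\cal X}/B}(\log {\cal D})\bigr)
\]
on a polydisc of positive radius, and then read off local freeness. In the paper this is the content of Theorem~\ref{theo:rnpi} (the Invariance Theorem), and its proof \emph{uses} local freeness of both sides (established beforehand in Theorem~\ref{theo:lff}) together with base change in order to run the Nakayama argument you sketch. Without already knowing that $R^qf_*$ is locally free, your ``coherent-sheaf Nakayama'' step cannot upgrade the isomorphism modulo $\mathfrak m_O$ to an isomorphism on a neighbourhood: surjectivity propagates, but injectivity needs control of ranks, which is precisely what you are trying to prove. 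The ``convergence analysis of the Hirsch extension data'' you describe is neither carried out here nor in the paper; moreover, in the paper's Yamada section the Hirsch variables $u_i$ are sent to the coordinates $t_i$ themselves, not to $\log z_i$, so there is no logarithmic series whose convergence must be checked.

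The paper's actual proof of Corollary~\ref{coro:rqe} is far shorter. After localizing to $B$, it applies Theorem~\ref{theo:lff}, whose proof uses only Grauert's semicontinuity criterion: it suffices to show that the base-change map
\[
R^qf_*({\cal E}\otimes_{{\cal O}_{\cal X}}\Om^{\bul}_{{\cal X}/\ol S}(\log{\cal D}))\otimes_{{\cal O}_{\ol S}}\kap(y)\;\lo\;H^q(X_y,\,{\cal E}\otimes\Om^{\bul}_{{\cal X}_y/y}(\log{\cal D}_y))
\]
is surjective for every $y$. For this one only needs the section~(\ref{ali:mocx}) to see that $R^qf_*\iota_*\iota^{-1}(\cdots)\to R^qf_{S*}({\cal E}\otimes\Om^{\bul}_{X/S}(\log D))$ is surjective, together with the elementary observation that the kernel ${\cal K}^{\bul}=j_!j^*(\cdots)$ has vanishing stalk along $X$, so proper base change gives $R^qf_*({\cal K}^{\bul})_y=0$ and hence $R^qf_*(\cdots)_y=R^qf_*\iota_*\iota^{-1}(\cdots)_y$. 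No invariance isomorphism, no convergence estimate, and no appeal to Theorem~\ref{theo:kikn} (which would undercut the purely algebraic character of the argument) is required.
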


\par 
Consider the case ${\cal Y}=B$. 
Let $B^*$ be the maximal open log analytic space of $B$ whose log structure is trivial. 
By (\ref{coro:rqe}) we see that 
$R^qf_*({\cal E}\otimes_{{\cal O}_{\cal X}}\Om^{\bul}_{{\cal X}/B}(\log {\cal D}))$ is
the canonical extension of 
$R^qf_*({\cal E}\otimes_{{\cal O}_{\cal X}}\Om^{\bul}_{{\cal X}/B}(\log {\cal D}))\vert_{B^*}$
in the sense of \cite{de}. 
In the case where ${\cal E}$ is trivial, this has been already stated 
in \cite{ur2} without proof. 
In order to prove (\ref{coro:rqe}),  
we use only log de Rham complexes; 
we do not use the Kummer \'{e}tale topos 
nor ${\cal X}^{\log}$ unlike \cite{kf} and \cite{ikn}. Hence 
we can give a purely algebraic proof of (\ref{coro:rqe}). 
%Moreover, we do not use the system of parameters of 
%$\Del^r$ unlike \cite{sti} and \cite{fii}.  

Using a variant of the section (\ref{ali:dxda}) 
(we call this variant {\it the $\infty$-adic Yamada section}) 
and (\ref{theo:rpi}), we obtain the following most important result 
in this article (we call this theorem the invariance theorem): 

\begin{theo}[{\bf Invariance theorem}]\label{theo:rnpi}
Let the notations be as in {\rm (\ref{theo:rpi})}. 
%For an open log analytic space $U$ of $\Del^{r_1}$ including the origin of 
%$\Del^{r_1}$, set $B_U:=U\times \os{\circ}{\Del}{}^{r_1}$. 
%Let $\iota_{U}\col X\os{\sus}{\lo} {\cal X}\vert_{f^{-1}(B_U)}
%$ be the natural closed immersion 
%and let $f_U\col {\cal X}\vert_{f^{-1}(B_U)}\lo B_U$ be the structural morphism. 
%Set ${\cal D}_U:={\cal D}_{{\cal X}\vert_{f^{-1}(B_U)}}$. 
Assume that $f\col \os{\circ}{\cal X}\lo \os{\circ}{B}$ is proper. 
Let $p\col B\lo \os{\circ}{\Del}{}^{r_2}(\simeq \os{\circ}{S})$ be the projection. 
For a positive integer $\eps$ less than $1$, let $B(\eps)$ 
be an open log analytic space whose underlying space is 
$\{z\in \os{\circ}{\Del}~\vert~\vert z\vert <\eps\}^{r_1}\times 
\os{\circ}{\Del}{}^{r_2}$ and whose log structure is the inverse image of $B$ 
$($a tubular neighborhood of $S)$ in $B$ . 
Let $p(\eps) \col B(\eps)\lo \os{\circ}{\Del}{}^{r_2}$ be the projection. 
Then there exists a canonical contravariantly functorial isomorphism 
\begin{align*} 
\sig \col {\cal O}_{B(\eps)}\otimes_{p(\eps)^{-1}({\cal O}_S)}
p^{-1}R^qf_{S*}({\cal E}\otimes_{{\cal O}_{\cal X}}\Om^{\bul}_{X/S}(\log D))
\os{\sim}{\lo}  
R^qf(\eps)_*({\cal E}\otimes_{{\cal O}_{\cal X}}
\Om^{\bul}_{{\cal X}(\eps)/B(\eps)}(\log {\cal D}(\eps))). 
\tag{1.5.1}\label{ali:oebxas}
\end{align*} 
Here ${\cal X}(\eps):={\cal X}\times_BB(\eps)$,  
${\cal D}(\eps):={\cal D}\times_BB(\eps)$ and $f(\eps)\col {\cal X}(\eps)\lo B(\eps)$ 
is the structural morphism.  
%which induces an isomorphism 
%\begin{align*} 
%\Gam(W,{\cal O}_{B}\otimes_{p^{-1}({\cal O}_S)}
%p^{-1}R^qf_{S*}({\cal E}\otimes_{{\cal O}_{\cal X}}\Om^{\bul}_{X/S}(\log D)))
%\os{\sim}{\lo}  
%\Gam(W,R^qf_*({\cal E}\otimes_{{\cal O}_{\cal X}}\Om^{\bul}_{{\cal X}/B}(\log {\cal D})))
%\tag{1.5.2}\label{ali:mxqbox}
%\end{align*} 
%for a point $t\in S$ and a small neighborhood $W$ of $t$ in $B$. 
\end{theo}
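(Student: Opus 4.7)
The plan is to refine the canonical derived-category section (\ref{ali:mox}) of Theorem \ref{theo:rpi} into a section defined over the tubular neighborhood $B(\varepsilon)$ (the \emph{$\infty$-adic Yamada section}), and then to promote the resulting morphism to an isomorphism using the local freeness and base-change compatibility provided by Corollary \ref{coro:rqe}. This mirrors the strategy by which Ertl-Yamada derive the derived Hyodo-Kato isomorphism from their Hyodo-Kato-type section in \cite{ey}.

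\textbf{Step 1 (the $\infty$-adic Yamada section).} The section (\ref{ali:mox}) splits, after applying $Rf_{S*}$, the natural projection from $Rf_{S*}\iota^{-1}(\mathcal{E}\otimes\Omega^\bullet_{\mathcal{X}/B}(\log\mathcal{D}))$ onto $Rf_{S*}(\mathcal{E}\otimes\Omega^\bullet_{X/S}(\log D))$. I would lift this splitting from $S$ to the tubular level via a Hirsch extension: working locally on $B(\varepsilon)$ where the strict polydisc factors of $B$ have coordinates $z_1,\dots,z_{r_1}$, adjoin to the log de Rham complex $\mathcal{E}\otimes\Omega^\bullet_{\mathcal{X}(\varepsilon)/B(\varepsilon)}(\log\mathcal{D}(\varepsilon))$ formal symbols $u_i\leftrightarrow\log z_i$, twisted by the monodromy operators $N_i$ attached to $\nabla$. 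Since $\nabla$ is locally nilpotent, the formal series $\exp(\sum_i u_iN_i)$ truncates on each local horizontal section, and because $|z_i|<\varepsilon<1$ on $B(\varepsilon)$, the resulting expressions converge analytically. This yields a canonical contravariantly functorial morphism
\begin{align*}
\widetilde{\sigma}\colon p^{-1}Rf_{S*}(\mathcal{E}\otimes\Omega^\bullet_{X/S}(\log D)) \longrightarrow Rf(\varepsilon)_*(\mathcal{E}\otimes\Omega^\bullet_{\mathcal{X}(\varepsilon)/B(\varepsilon)}(\log\mathcal{D}(\varepsilon)))
\end{align*}
of $p(\varepsilon)^{-1}\mathcal{O}_S$-modules, whose restriction along $\iota_S\colon S\hookrightarrow B(\varepsilon)$ recovers the section (\ref{ali:mox}) followed by the base-change identification of Corollary \ref{coro:rqe}.

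\textbf{Step 2 ($\mathcal{O}_{B(\varepsilon)}$-linear extension and isomorphism check).} The $\mathcal{O}_{B(\varepsilon)}$-linear adjoint of $\widetilde{\sigma}$, followed by passage to $H^q$, produces the candidate morphism $\sigma$ of (\ref{ali:oebxas}). Corollary \ref{coro:rqe} applied to $f(\varepsilon)$ makes the target a locally free coherent $\mathcal{O}_{B(\varepsilon)}$-module commuting with base change; the source is locally free coherent as the pullback along $p$ of the locally free coherent $\mathcal{O}_S$-module $R^qf_{S*}(\mathcal{E}\otimes\Omega^\bullet_{X/S}(\log D))$. The two are of the same rank since Corollary \ref{coro:rqe}'s base-change property identifies both with $R^qf_{S*}(\mathcal{E}\otimes\Omega^\bullet_{X/S}(\log D))$ on $S$, and by construction $\sigma$ restricts to the identity there. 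A morphism of locally free coherent sheaves of the same rank on $B(\varepsilon)$ whose determinant is a nowhere-vanishing section along $S$ is an isomorphism on a neighborhood of $S$; combined with the fact that $p\colon B(\varepsilon)\to\mathring{\Delta}^{r_2}$ is retracted by $S$ (every fiber of $p$ contains a point of $S$) and commutativity of both sides with base change along these fibers, this forces $\sigma$ to be an isomorphism throughout $B(\varepsilon)$.

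\textbf{Main obstacle.} Step 1 is the genuinely delicate part. In \cite{ey}'s $p$-adic setting, the Hirsch-extension construction lives in a $p$-adically completed framework where convergence of the monodromy-correction series is automatic. Here the analogous construction must be carried through in the classical analytic world on the open polydisc $B(\varepsilon)$, and one must verify convergence by hand, using local nilpotency of $\nabla$ to reduce the exponential series to polynomials on each local section and the condition $\varepsilon<1$ to control the logarithms. This is exactly the ``careful attention to the proof'' flagged in the introduction as the main difference from the $p$-adic case. A secondary subtlety is the contravariant functoriality of $\widetilde{\sigma}$, which must be checked to be independent of the auxiliary choice of coordinates $z_i$ up to a change-of-parameters compatibility (along the lines of the $c\mapsto cz$ rigidity discussed after (\ref{ali:dxa}) in the introduction).
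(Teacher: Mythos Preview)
Your overall architecture matches the paper's: build a canonical section via a Hirsch extension, then upgrade to an isomorphism via local freeness plus a Nakayama argument. But your Step~1 misidentifies both the mechanism and the actual obstacle.

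The paper's Yamada section does \emph{not} involve logarithms, monodromy operators, or analytic convergence. One Hirsch-extends the \emph{absolute} complex ${\cal E}\otimes_{{\cal O}_{\cal X}}\Om^{\bul}_{{\cal X}/\os{\circ}{S}}(\log {\cal D})$ (not the relative one over $B(\eps)$) by formal polynomial variables $u_i$ with $du_i=d\log t_i$. This Hirsch extension is quasi-isomorphic, via Theorem~\ref{theo:afp} and Corollary~\ref{coro:sqi}, to ${\cal E}\otimes_{{\cal O}_{\cal X}}\Om^{\bul}_{X/S}(\log D)$ on one side; on the other side it maps to the relative complex ${\cal E}\otimes_{{\cal O}_{\cal X}}\Om^{\bul}_{{\cal X}/\ol{S}}(\log {\cal D})$ by the purely algebraic substitution $u_i\mapsto t_i$ (the \emph{coordinate} itself, see (\ref{ali:osils})--(\ref{ali:osls}) and (\ref{ali:xxd})). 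This is a morphism of complexes because $d\log t_i=0=dt_i$ in $\Om^1_{{\cal X}/\ol{S}}$. No series $\exp(\sum u_iN_i)$ appears and no convergence question arises; the monodromy operators enter only afterwards (Proposition~\ref{prop:ison}) to compare sections built from different parameter choices. Your route through $u_i\leftrightarrow\log z_i$ would founder precisely because $\log z_i$ is not holomorphic at $z_i=0$; the paper's point is to substitute the coordinate rather than its logarithm, which is why the construction stays purely algebraic.

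The genuine subtlety the paper negotiates is different: passing from a section defined only at the level of $\iota^{-1}$-sheaves to one on an honest open neighborhood of $S$ in $\ol{S}$. This uses the stalk equality (\ref{ali:eosexs}), obtained from proper base change and the vanishing $\iota^{-1}{\cal K}^{\bul}=0$, together with coherence of $R^qf_{S*}$ to see that the section on stalks factors through $R^qf_*$ over some $V\supset S$. Your Step~2 is essentially correct and matches the paper's Nakayama argument at points of $S$; the retraction remark is unnecessary---one simply shrinks $\eps$ so that $B(\eps)$ lies inside the neighborhood where the determinant is invertible.
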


The isomorphism (\ref{ali:oebxas}) tells us that 
$R^qf_*({\cal E}\otimes_{{\cal O}_{\cal X}}
\Om^{\bul}_{{\cal X}(\eps)/B(\eps)}(\log {\cal D}(\eps)))$ depends only on 
$R^qf_{S*}({\cal E}\otimes_{{\cal O}_{\cal X}}\Om^{\bul}_{X/S}(\log D))$ 
and $\os{\circ}{B}(\eps)$.  
%Especially it does not depend on ${\cal X}(\eps)/B(\eps)$ nor 
%the log structure of $B(\eps)$. 
Set $X_{\os{\circ}{B}(\eps)}:=X\times_{\os{\circ}{S},\os{\circ}{p}(\eps)}\os{\circ}{B}$, 
$D_{\os{\circ}{B}(\eps)}:=D\times_{\os{\circ}{S},\os{\circ}{p}(\eps)}\os{\circ}{B}$ 
and let $f_{\os{\circ}{B}(\eps)}\col X_{\os{\circ}{B}(\eps)}\lo \os{\circ}{B}(\eps)$ 
be the structural morphism 
(``the constant family of $X/S$ over $\os{\circ}{B}(\eps)$''). 
In other words, the isomorphism (\ref{ali:oebxas}) is the following isomorphism
\begin{align*} 
R^qf_{\os{\circ}{B}(\eps)*}({\cal E}\otimes_{{\cal O}_{\cal X}}\Om^{\bul}_{X/S}(\log D)
\otimes_{{\cal O}_S}{\cal O}_{B(\eps)})\os{\sim}{\lo} 
R^qf_*({\cal E}\otimes_{{\cal O}_{\cal X}}
\Om^{\bul}_{{\cal X}(\eps)/B(\eps)}(\log {\cal D}(\eps))). 
\tag{1.5.2}\label{ali:ocxas}
\end{align*} 
However the isomorphism (\ref{ali:ocxas}) does not 
induce a filtered isomorphism with respect to log Hodge filtrations in general. 
As usual, the variation of the filtrations depends on the family 
$({\cal X}(\eps),{\cal D}(\eps))$ 
over $B(\eps)$. 
Note also that the log structures of the base log analytic spaces of both hand sides in   
(\ref{ali:ocxas}) are different. 
%(except the constant family) in the smooth case. 
Though the following is included in the theorem above, 
we would like to state it because of the importance: 

\begin{coro}[{\bf Invariance theorem of the pull-backs of morphisms}]\label{coro:urm}
Let $B'$ and $S'$ be analogous log analytic spaces to $B$ and $S$, respectively. 
Let $({\cal X}',{\cal D}')/B'$ 
be an analogous log analytic space to $({\cal X},{\cal D})/B$ 
and let $(X',D')$ be an analogous log analytic space to 
$(X,D)$ for $({\cal X}',{\cal D}')/B'$. 
%For $i=1,2$, let 
%\begin{equation*} 
%\begin{CD}
%({\cal X},{\cal D})@>{g_i}>> ({\cal X}',{\cal D}')\\
%@V{f}VV @VV{f'}V \\
%B @>{u}>> B'
%\end{CD}
%\end{equation*}
%be two commutative diagrams of log analytic spaces. 
Assume that we are given the following commutative diagram 
\begin{equation*} 
\begin{CD}
(X,D)@>{g}>> (X',D')\\
@VVV @VVV \\
S@>{u\vert_S}>> S'\\
@V{\bigcap}VV @VV{\bigcap}V \\
B @>{u}>> B'. 
\end{CD}
\end{equation*}
%Let $t$ be a point of $S$. 
Let $({\cal E}',\nabla')$ be an analogous integrable connection to $({\cal E},\nabla)$ 
for ${\cal X}'/\os{\circ}{S}{}'$. 
Set $E^{\square}:={\cal E}^{\square}$, where $\square=$nothing or $'$. 
Let $\Phi \col g^*(E')\lo E$ 
be a morphism of ${\cal O}_X$ modules fitting into 
the following commutative diagram$:$
\begin{equation*} 
\begin{CD}
g^*(E')@>{\Phi}>> E\\
@V{g^*(\nabla')}VV @VV{\nabla}V \\
g^*(E'\otimes_{{\cal O}_{X'}}\Om^1_{X'/\os{\circ}{S}{}'}(\log D'))
@>{\Phi\otimes g^*}>> E\otimes_{{\cal O}_X}\Om^1_{X/\os{\circ}{S}{}}(\log D). 
\end{CD}
\end{equation*}
%If the two diagrams above are the same on $X$, 
Then there exist $\eps>0$ and $\eps'>0$ 
such that $u(B(\eps))  \subset B'(\eps')$ 
such that there exists the following canonical morphism 
\begin{align*}
g(\eps,\eps')^*\col &u(\eps,\eps')^*
(R^qf'(\eps')_*({\cal E}'\otimes_{{\cal O}_{{\cal X}'}}\Om^{\bul}_{{\cal X}'(\eps')/B'(\eps')}
(\log {\cal D}'(\eps'))))\\
&\lo 
R^qf(\eps)_*({\cal E}\otimes_{{\cal O}_{\cal X}}\Om^{\bul}_{{\cal X}(\eps)
/B(\eps)}(\log {\cal D}(\eps))). 
\end{align*} 
%for $i=1,2$ are equal. 
Here $u(\eps,\eps')\col B(\eps)\lo B'(\eps')$ is the induced morphism
by $u$. The morphism $g(\eps,\eps')$ satisfies the usual transitive relation. 
If the morphism $g$ is lifted to a morphism 
$\wt{g}\col {\cal X}(\eps)\lo {\cal X}'(\eps')$ and if 
the morphism $\Phi$ is lifted to a morphism 
$\wt{\Phi} \col g^*({\cal E}')\lo {\cal E}$ 
be a morphism of ${\cal O}_{\cal X}$ modules fitting into 
the following commutative diagram$:$
\begin{equation*} 
\begin{CD}
\wt{g}^*({\cal E}'(\eps'))@>{\wt{\Phi}}>> {\cal E}(\eps)\\
@V{\wt{g}{}^*(\nabla')}VV @VV{\nabla}V \\
\wt{g}^*({\cal E}'\otimes_{{\cal O}_{{\cal X}'}}\Om^1_{{\cal X}'(\eps')/\os{\circ}{S}{}'}(\log {\cal D}'(\eps')))
@>{\wt{\Phi}\otimes \wt{g}{}^*}>> {\cal E}\otimes_{{\cal O}_{\cal X}}\Om^1_{{\cal X}(\eps)/\os{\circ}{S}{}}(\log {\cal D}(\eps)), 
\end{CD}
\end{equation*}
where ${\cal E}^{\square}(\eps^{\square})=
{\cal E}^{\square}\vert_{{\cal X}^{\square}(\eps^{\square})}$, 
then 
\begin{align*}
g(\eps,\eps')^*=\wt{g}^*\col &u(\eps,\eps')^*
(R^qf'(\eps')_*({\cal E}'\otimes_{{\cal O}_{{\cal X}'}}\Om^{\bul}_{{\cal X}'(\eps')/B'(\eps')}
(\log {\cal D}'(\eps'))))\\
&\lo 
R^qf(\eps)_*({\cal E}\otimes_{{\cal O}_{\cal X}}\Om^{\bul}_{{\cal X}(\eps)
/B(\eps)}(\log {\cal D}(\eps))). 
\end{align*}
\end{coro}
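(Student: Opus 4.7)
The plan is to reduce the construction to the ``constant family'' side via the invariance isomorphism $\sig$ of (\ref{theo:rnpi}), where the pull-back becomes transparent. First I would choose $\eps,\eps'>0$ small enough that $\os{\circ}{u}(\os{\circ}{B}(\eps))\sus \os{\circ}{B}{}'(\eps')$; such $\eps,\eps'$ exist by the continuity of $\os{\circ}{u}$ along the fiber over $\os{\circ}{S}$ (where $u$ already factors through $\os{\circ}{S}{}'$ by the given commutative diagram). Since $u$ is a morphism of log analytic spaces and the log structures on $B(\eps),B'(\eps')$ are the restrictions of those on $B,B'$, the restriction $u(\eps,\eps')\col B(\eps)\lo B'(\eps')$ is automatically a log morphism. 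Notice that no extension of $g$ above the special fiber is required: everything over $B(\eps)$ will be produced by $\sig$.

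Next I would define $g(\eps,\eps')^*$ as a composite of four maps. (i) The inverse of $\sig'$ of (\ref{theo:rnpi}) applied to the primed family, identifying $R^qf'(\eps')_*({\cal E}'\otimes \Om^{\bul}_{{\cal X}'(\eps')/B'(\eps')}(\log {\cal D}'(\eps')))$ with the constant-family term ${\cal O}_{B'(\eps')}\otimes_{p'(\eps')^{-1}({\cal O}_{S'})} p'{}^{-1}R^qf'_{S'*}({\cal E}'\otimes \Om^{\bul}_{X'/S'}(\log D'))$. (ii) The pull-back under $u(\eps,\eps')$. (iii) The purely special-fiber morphism
\[
(u|_S)^{-1}R^qf'_{S'*}({\cal E}'\otimes_{{\cal O}_{X'}}\Om^{\bul}_{X'/S'}(\log D'))\lo R^qf_{S*}({\cal E}\otimes_{{\cal O}_X}\Om^{\bul}_{X/S}(\log D))
\]
functorially induced by $(g,\Phi)$, whose very existence is the compatibility of $\Phi$ with the special-fiber connections asserted in the hypothesis. (iv) The isomorphism $\sig$ of (\ref{theo:rnpi}) applied to the unprimed family. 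Tensoring (iii) with ${\cal O}_{B(\eps)}$ over $p(\eps)^{-1}({\cal O}_S)$ and composing yields $g(\eps,\eps')^*$. The transitivity relation is then formal: it follows from the transitivity of (iii) on the special fibers combined with the naturality of $\sig$ in the family, which is part of the contravariant-functoriality statement of (\ref{theo:rnpi}).

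The main obstacle, and the only non-formal content, is the compatibility $g(\eps,\eps')^*=\wt g^*$ in the case that $g$ and $\Phi$ extend to $\wt g\col {\cal X}(\eps)\lo {\cal X}'(\eps')$ and $\wt\Phi$ respecting the full log connections on the tubular neighborhoods. I would prove this by tracing the Hirsch-extension/Yamada-section construction of $\sig$: one shows that the Yamada section refining (\ref{ali:mocx}) (the coefficient version of the canonical section in (\ref{theo:rpi})) is contravariantly functorial for all morphisms of log semistable families with horizontal NCD and compatible locally nilpotent integrable connections, and that $\sig$ is produced from this section by a base change that commutes with $\wt g^*$. Granting the contravariant functoriality of the section (\ref{ali:mocx}) already contained in (\ref{theo:rpi}), the identification $g(\eps,\eps')^*=\wt g^*$ reduces to a diagram chase. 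The genuinely subtle step is verifying that the Hirsch-extension formalism underlying (\ref{ali:mocx}) is natural in the pair $(\wt g,\wt\Phi)$, not merely in automorphisms over a fixed base; this in turn amounts to checking that the Hirsch model commutes with arbitrary dga maps compatible with the chosen generators, which is where the psychological support of \cite{ey} really becomes technical work.
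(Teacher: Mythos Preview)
Your proposal is correct and matches the paper's intended argument. The paper gives no separate proof of this corollary; in the introduction it says the statement is ``included in the theorem above'' (the Invariance Theorem (\ref{theo:rnpi})), and in \S\ref{sec:m} it is restated as (\ref{coro:ucrm}) without proof. Your four-step composite using $\sig'^{-1}$, $u(\eps,\eps')^*$, the special-fiber pull-back by $(g,\Phi)$, and $\sig$ is exactly the unwinding the paper has in mind, and the compatibility $g(\eps,\eps')^*=\wt g^*$ is precisely the contravariant functoriality already recorded in (\ref{prop:rks}), (\ref{prop:ys}), (\ref{coro:sqi}), (\ref{theo:tesc}) and (\ref{coro:tesc}) for the Hirsch/Yamada-section construction.
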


The results  (\ref{theo:rnpi}) and (\ref{coro:urm}) suggest that 
there may exist a theory of crystals for an integrable connection on 
a proper log smooth morphism $Y\lo S$ of fine or 
fs(=fine and saturated) log analytic spaces over $B(\eps)$ 
as in the algebraic case (\cite{grc}, \cite{klog1}). 
The main theorem of \cite{no} (cf.~\cite{kjn})
which tells us that $f^{\log}\col Y^{\log}\lo S^{\log}$ is a fiber bundle
%for a local system $W$ on $X^{\log}$ in the setting in (\ref{theo:kikn}) 
(cf.~\cite{ur1}, \cite{ur2}) 
also suggests this. However the main theorem does not imply (\ref{theo:rnpi}): 
it only implies that there exists a canonical isomorphism 
\begin{align*}  
&{\cal O}_{B(\eps)^{\log}}\otimes_{p(\eps)^{-1}({\cal O}_S)}
p^{-1}R^qf_{S*}({\cal E}\otimes_{{\cal O}_{\cal X}}\Om^{\bul}_{X/S}(\log D))
\os{\sim}{\lo}  \tag{1.6.1}\label{ali:oeas}\\
&{\cal O}_{B(\eps)^{\log}}\otimes_{{\cal O}_{B(\eps)}}
R^qf(\eps)_*({\cal E}\otimes_{{\cal O}_{\cal X}}
\Om^{\bul}_{{\cal X}(\eps)/B(\eps)}(\log {\cal D}(\eps))). 
\end{align*}  
Moreover our theorem (\ref{theo:rnpi}) 
has an advantage in the concrete calculation of the isomorphism 
(\ref{ali:oebxas}) by using \v{C}ech coverings of ${\cal X}(\eps)$ 
as in the log rigid analytic case in \cite{ey}. 
%we can say that $R^qf_*({\cal E}\otimes_{{\cal O}_{\cal X}}
%\Om^{\bul}_{{\cal X}/B}(\log {\cal D}))$ has two parts of the nature of crystals 
%(as in the algebraic case) at least on a small tubular neighborhood of $S$ in $B$: 
%it grows rigidly. 
%As an immediate corollary to (\ref{coro:urm}) 
%we obtain the rigidity of the relative log de Rham complex 
%$R^qf(\eps)_*({\cal E}\otimes_{{\cal O}_{\cal X}}\Om^{\bul}_{{\cal X}(\eps)
%/B(\eps)}(\log {\cal D}(\eps))$ for a small $\eps>0$: this depends only on 
%$R^qf_{S*}({\cal E}\otimes_{{\cal O}_{\cal X}}\Om^{\bul}_{X
%/S}(\log D))$.  
%We can say that $R^qf_{S*}({\cal E}\otimes_{{\cal O}_{\cal X}}\Om^{\bul}_{X
%/S}(\log D))$ has every information of 
%$R^qf(\eps)_*({\cal E}\otimes_{{\cal O}_{\cal X}}\Om^{\bul}_{{\cal X}(\eps)
%/B(\eps)}(\log {\cal D}(\eps)))$. 

\par 
By (\ref{coro:rqe}) and (\ref{theo:rnpi}) we obtain the following result: 

\begin{coro}\label{coro:ps}
%Assume that $r_2=0$ $($hence $S=s)$. 
For a sufficiently close point $u\in B$ from a point of $S$, 
there exists a canonical contravariantly functorial isomorphism 
\begin{align*} 
{\cal O}_{\ol{S},u}\otimes_{{\cal O}_{S,p(u)}}
R^qf_{S*}({\cal E}\otimes_{{\cal O}_{\cal X}}\Om^{\bul}_{{\cal X}/B}(\log {\cal D})))_{p(u)}
\os{\sim}{\lo}  
R^qf_*({\cal E}\otimes_{{\cal O}_{\cal X}}\Om^{\bul}_{{\cal X}/B}(\log {\cal D})))_u. 
\tag{1.7.1}\label{ali:mxoox}
\end{align*} 
Consequently there exists a canonical contravariantly functorial isomorphism 
\begin{align*} 
H^q(X_s,{\cal E}\otimes_{{\cal O}_{\cal X}}\Om^{\bul}_{X_s/s}(\log D_s)))
\os{\sim}{\lo}  
H^q({\cal X}_u,{\cal E}\otimes_{{\cal O}_{\cal X}}\Om^{\bul}_{{\cal X}_u/u}
(\log {\cal D}_u))). 
\tag{1.7.2}\label{ali:muox}
\end{align*} 
%Consequently 
%$H^q({\cal X}_u,{\cal E}\otimes_{{\cal O}_{\cal X}}\Om^{\bul}_{{\cal X}_u/u}
%(\log {\cal D}_u)))$ is determined  by 
%$H^q(X_s,{\cal E}\otimes_{{\cal O}_{\cal X}}\Om^{\bul}_{X_s/s}(\log D_s)))$. 
\end{coro}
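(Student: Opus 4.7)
The plan is to derive this corollary by localizing the global isomorphism $\sig$ of Theorem~\ref{theo:rnpi} at the point $u$ and then passing to fibers via the local freeness and base change property of Corollary~\ref{coro:rqe}.

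For (\ref{ali:mxoox}), since $u$ is close to a point of $S$, I choose $\eps>0$ small enough that $u\in B(\eps)$, and apply Theorem~\ref{theo:rnpi} to obtain the canonical isomorphism $\sig$ on this tubular neighbourhood. Taking the stalk at $u$: the right hand side of (\ref{ali:oebxas}) yields $R^qf_*({\cal E}\otimes_{{\cal O}_{\cal X}}\Om^{\bul}_{{\cal X}/B}(\log {\cal D}))_u$, because $B(\eps)\sus B$ is open and $f(\eps)$ is the restriction of $f$, so stalks in $B(\eps)$ and in $B$ coincide at $u$. The left hand side yields ${\cal O}_{B(\eps),u}\otimes_{{\cal O}_{S,p(u)}}R^qf_{S*}({\cal E}\otimes_{{\cal O}_{\cal X}}\Om^{\bul}_{X/S}(\log D))_{p(u)}$, which under the identification ${\cal O}_{B(\eps),u}={\cal O}_{B,u}={\cal O}_{\ol{S},u}$ is precisely the left hand side of (\ref{ali:mxoox}). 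Two admissible choices of $\eps$ give the same stalk by the canonicity of $\sig$, so the result is independent of $\eps$.

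For (\ref{ali:muox}), I tensor both sides of (\ref{ali:mxoox}) with the residue field $\kap(u)={\mab C}$ over ${\cal O}_{B,u}$. By Corollary~\ref{coro:rqe} applied to $f$, the module $R^qf_*({\cal E}\otimes_{{\cal O}_{\cal X}}\Om^{\bul}_{{\cal X}/B}(\log {\cal D}))$ is locally free on $B$ and commutes with base change by any strict morphism of log analytic spaces; base change along the closed immersion $\{u\}\os{\sus}{\lo}B$ identifies the right hand side with $H^q({\cal X}_u,{\cal E}\otimes_{{\cal O}_{\cal X}}\Om^{\bul}_{{\cal X}_u/u}(\log {\cal D}_u))$. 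The same corollary applied to $f_S\col X\lo S$ (formally the degenerate case $r_1=0$ of the same setup) identifies the left hand side, after the further tensor $\kap(u)\otimes_{{\cal O}_{S,p(u)}}(\cdots)_{p(u)}=\kap(p(u))\otimes_{{\cal O}_{S,p(u)}}(\cdots)_{p(u)}$, with $H^q(X_{p(u)},{\cal E}\otimes_{{\cal O}_{\cal X}}\Om^{\bul}_{X_{p(u)}/p(u)}(\log D_{p(u)}))$; here the symbol $s$ in (\ref{ali:muox}) denotes the point $p(u)\in S$. This gives (\ref{ali:muox}).

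Canonicity and contravariant functoriality transfer from the corresponding properties of $\sig$ in Theorem~\ref{theo:rnpi} and of the base change maps in Corollary~\ref{coro:rqe}, via Corollary~\ref{coro:urm} for the pull-back compatibility; both stalk formation and residue-field tensor product respect morphisms of base spaces, so this step is automatic. The main obstacle here is purely administrative: one has to fix the meaning of ${\cal O}_{\ol{S},u}$ as the stalk ${\cal O}_{B,u}$ equipped with its ${\cal O}_S$-algebra structure coming from $p^{-1}{\cal O}_S\lo{\cal O}_B$, after which the argument is a direct localization of (\ref{theo:rnpi}) combined with fiber-taking via (\ref{coro:rqe}), with no genuinely new geometric input required.
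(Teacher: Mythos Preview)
Your argument is correct and follows exactly the route the paper indicates: it derives the corollary by taking stalks in the isomorphism $\sig$ of Theorem~\ref{theo:rnpi} and then passing to fibers using the local freeness and base change of Corollary~\ref{coro:rqe}. The only cosmetic point is that the local freeness and base change for $R^qf_{S*}$ is not literally the ``$r_1=0$'' case of Corollary~\ref{coro:rqe} (there $r_1\geq 1$), but is the separate statement Corollary~\ref{coro:ls} (cf.~Remark~\ref{rema:nnta}); this does not affect your argument.
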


\par 
%Consider the case $r_2=0$. 
%Assume that $\os{\circ}{\cal X}$ is proper over $\os{\circ}{B}$. 
%Set ${\cal U}:={\cal X}\setminus {\cal D}$. 
%Let $u$ be any exact point of $B$ whose log structure is trivial. 
%As a corollary of the isomorphism (\ref{ali:mxqbox}), we obtain 
%a canonical contravariantly functorial isomorphism 
%\begin{align*} 
%H^q(X,{\cal E}\otimes_{{\cal O}_{\cal X}}\Om^{\bul}_{X/s}(\log D))
%\os{\sim}{\lo} 
%H^q({\cal U}_u,{\cal E}\otimes_{{\cal O}_{\cal X}}\Om^{\bul}_{{\cal U}_u/u}). 
%\tag{1.6.3}\label{ali:matbox}
%\end{align*} 
%if $u\in \Del^*$ is close from $O$.  
If $({\cal E}\vert_{{\cal U}_u},\nabla)
=({\cal O}_{{\cal U}_u}\otimes_{\mab C}V,d\otimes {\rm id}_V)$ 
for a local system $V$ on ${\cal U}_u$ of 
finite dimensional vector spaces over ${\mab C}$, 
then we obtain a canonical contravariantly functorial isomorphism 
\begin{align*} 
H^q(X_s,{\cal E}\otimes_{{\cal O}_{\cal X}}\Om^{\bul}_{X_s/s}(\log D_s))
\os{\sim}{\lo} H^q({\cal U}_u,V). 
\tag{1.7.3}\label{ali:matox}
\end{align*} 
%if $u$ is close from $O$. 
%Hence we obtain a canonical contravariantly functorial isomorphism 
%\begin{align*} 
%H^q(X,{\cal E}\otimes_{{\cal O}_{\cal X}}\Om^{\bul}_{X/s}(\log D))
%\os{\sim}{\lo} 
%H^q({\cal U}_u,V). 
%\tag{1.3.7}\label{ali:mabox}
%\end{align*} 
This is a generalization of the isomorphism (\ref{ali:dxa}). 
We call this generalization the {\it Steenbrink isomorphism}. 
This also tells us that $H^q({\cal U}_u,V)$ is determined by 
$(X_s,D_s)$ and ${\cal E}\vert_{X_s}$. 
%(if $u$ is close from $O$). 
%The reader should note 
%that the isomorphism (\ref{ali:matox}) is independent of the choice of 
%local parameters of $\Del^{r_1}$ (up to $({\mab C}^*)^{r_1}$). 

%As an application of (\ref{coro:urm}), 
%we obtain the following by the smoothing of log $K3$-surfaces 
%(\cite{frg}, \cite{kawn}): 

%\begin{coro}\label{coro:acc}
%Let $X$ be a log analytic $K3$ surface over the log point $s$ of ${\mab C}$. 
%Assume that $\os{\circ}{X}$ is K\"{a}hler. 
%Then the morphism 
%\begin{align*} 
%{\rm Aut}(X/{\mab C})\lo {\rm Aut}_{\mab C}(H^2_{\rm dR}(X/s))
%\tag{1.9.1}\label{ali:xca}
%\end{align*} 
%is injective. 
%\end{coro} 
%The corollary (\ref{coro:acc}) is an $\infty$-adic analytic analogue of 
%\cite[(6.12)]{nk3}, in which $H^2_{\rm dR}(X/s)$ has been replaced by 
%the log crystalline cohomology $H^2_{\rm crys}(Y/{\cal W})$, 
%where $Y$ is a log $K3$ surface over $s_{\kap}$. 
%(See \cite[\S7]{ma} for examples of 
%nontrivial elements of ${\rm Aut}(X/{\mab C})$ for 
%the algebraic degenerate log $K3$ surfaces.) 

\par 
We can also obtain the algebraic analogues of 
(\ref{theo:rpi}) and (\ref{coro:rqe}) by algebraic proofs 
(without using the Lefschetz principle, GAGA nor (\ref{theo:rpi})). 
This is also one of advantages of our algebraic method. 
In a future article we would like to discuss this.
\par 
As to the locally freeness in (\ref{coro:rqe}),  
%and (\ref{coro:qfe}), 
our method is also applicable for a more general family with worse reduction 
by virtue of the contravariant functoriality of the section (\ref{ali:mocx}). 
This general case is not contained in the case (\ref{theo:kikn}). 
We would also like to discuss this. 
\par 
Our results (\ref{theo:rnpi}) and (\ref{coro:urm}) 
should be applied for (the automorphism groups of) 
concrete degeneratation of analytic spaces, e.~g., 
(log) $K3$ surfaces, more generally (log) Calabi-Yau analytic spaces, 
degenerations of complex tori, degenerations of hyperk\"{a}hler manifolds 
and so on (cf.~\cite{frg}, \cite{kawn}, \cite{bea}, ...). 
\par 
The contents of this article are as follows. 
\par
In \S\ref{sec:da} we recall results in \cite{nf} 
which are necessary in this article. 
\par 
In \S\ref{sec:hie} we give additional results to those in \cite{nf}.   
\par 
In \S\ref{sec:itst} we give the definition of SNCL analytic spaces. 
\par 
In \S\ref{sec:lp} we consider the completed Hirsch extension of 
an integrable connection on a semistable family over a polydisc. 
\par 
In \S\ref{sec:m} we give the invariance theorems (\ref{theo:rnpi}) and (\ref{coro:urm}). 
%\par 
%In \S\ref{sec:ac} we give analogues of results in the algebraic case. 
%\par 
%In \S\ref{sec:k3} we give an application for an automorphism of 
%log analytic K\"{a}hler $K3$ surfaces and a morphism of degenerations of complex tori. 

%The key point of the proof of (\ref{theo:rpi}) is to construct 
%Hence we see that the morphism (\ref{ali:moox}) is surjective for any $q\in {\mab N}$. 
%By using Grauert's upper semi-continuity theorem, we see that 
%$R^qf_*({\cal E}\otimes_{{\cal O}_{\cal X}}\Om^{\bul}_{{\cal X}/B}(\log {\cal D}))$ 
%is locally free and commutes with base change. 
%Since the isomorphism (\ref{ali:mox}) 
%is independent of the choice of the parameters of $\Del^r$, 
%we have the canonical isomorphism between 
%$R^qf_*({\cal E}\otimes_{{\cal O}_{\cal X}}\Om^{\bul}_{X/s}(\log D))$ 
%and 
%$R^qf_*({\cal E}\otimes_{{\cal O}_{\cal X}}\Om^{\bul}_{{\cal X}_u/u})$. 
%In particular we can get rid of the dependence of the isomorphism 
%(\ref{ali:xtc}) on the parameter. 
%In fact, the isomorphism (\ref{ali:mox}) is contravariantly functorial. 
%Our proof of (\ref{theo:rpi}) is almost algebraic unlike \cite{kf} and \cite{ikn}. 

\par
\bigskip
\parno
{\bf Acknowledgment.} 
%I am grateful to T.~Yoshida for 
%having asked me whether the functorial compatibilities 
%with respect to $p$-adic weight spectral sequence 
%has been proved at the conference about Shimura varieties 
%at Kyoto in November 2008. 
%Though the question was only a word, 
%it was a strong happy power in my heart during this work. 
I would like to express my sincere gratitude to T.~Fujisawa 
for pointing out a non-minor mistake 
in a previous version of this article.

\section{Recall of results in \cite{nf}}\label{sec:da}  
%In \cite{fc} and \cite{fr} Fujisawa has considered the ``maximal'' logarithmic case 
%for a generalized semistable morphism: the case $r_2=0$ 
%in the notation (\ref{theo:rpi}) (without the horizontal log structure). 
%However, even in this case, the ``non-maximal'' case (the case $r_2>0)$ 
%arises at points which is different from the ``origin''. 
%Hence we consider a general log analytic space as 
%a base log analytic space from the beginning of this section. 
In this section we recall results in \cite{nf} which are necessary in this article. 
\par 
Let $r$ be a positive integer. 
Let $S$ be an analytic family over ${\mab C}$ of log points of virtual dimension $r$,  
that is, locally on $S$, the log structure $M_S$ of $S$ is isomorphic to  
${\mab N}^r\oplus {\cal O}_{S}^*\owns ((n_1,\ldots,n_r),u)\lom 0^{\sum_{i=1}^rn_i}u\in  
{\cal O}_{S}$ (cf.~\cite[\S2]{nb}), where $0^n=0$ when $n\not =0$ and $0^0:=1$. 
Let $g\col Y\lo S$ be a log smooth morphism of log analytic spaces over ${\mab C}$. 
\par
Locally on $S$,  there exists a family  
$\{t_i\}_{i=1}^r$ of local sections of $M_S$ 
giving a local basis of $\ol{M}_S:=M_S/{\cal O}_S^*$. 
Let $M_i$ be the submonoid sheaf of $M_S$ generated by $t_i$. 
For all $1\leq \forall i\leq r$, the submonoid sheaf 
$\bigoplus_{j=1}^iM_j$ of $M$ and $\os{\circ}{S}$ 
defines a family of log points of virtual dimension $i$. 
Let $S_i:=S(M_1,\ldots,M_i)=(\os{\circ}{S},(\bigoplus_{j=1}^iM_j\lo {\cal O}_S)^a)$ 
be the resulting local log analytic space. Set $S_0:=\os{\circ}{S}$. 
Then we have the following sequence of families of log points of virtual dimensions: 
\begin{align*} 
S=S_r\lo S_{r-1}\lo S_{r-2}\lo \cdots \lo S_1\lo S_0=\os{\circ}{S}.
\tag{2.0.1}\label{ali:ffoy}
\end{align*} 
The one-form $d\log t_i\in \Om^1_{S/\os{\circ}{S}}$ is 
independent of the choice of the generator 
$t_i$ of $M_i$. Denote also by $d\log t_i\in \Om^1_{Y/\os{\circ}{S}}$ 
the image of $d\log t_i\in \Om^1_{S/\os{\circ}{S}}$ in 
$\Om^1_{Y/\os{\circ}{S}}$.  
Let ${\cal F}$ be a 
(not necessarily coherent) locally free ${\cal O}_Y$-module and let 
\begin{align*} 
\nabla \col {\cal F}\lo {\cal F}\otimes_{{\cal O}_Y}\Om^1_{Y/\os{\circ}{S}}
\tag{2.0.2}\label{ali:ffoya}
\end{align*} 
be an integrable connection. 
(In later sections we assume that ${\cal F}$ is coherent.) 
Then we have the complex 
${\cal F}\otimes_{{\cal O}_Y}\Om^{\bul}_{Y/\os{\circ}{S}}$ 
of $g^{-1}({\cal O}_S)$-modules.

\begin{lemm}[{\rm \cite[(3.1)]{nf}}]\label{lemm:qseux}
$(1)$ The sheaf $\Om^i_{Y/\os{\circ}{S}}$ $(i\in {\mab N})$ is a locally free 
${\cal O}_Y$-module. 
\par 
$(2)$ Locally on $Y$, the following sequence 
\begin{align*} 
0  \lo {\cal F}
\otimes_{{\cal O}_Y}
{\Om}^{\bul}_{Y/S_i}[-1] 
\os{{\rm id}_{\cal F}\otimes (d\log t_i\wedge)}{\lo} 
{\cal F}\otimes_{{\cal O}_Y}
{\Om}^{\bul}_{Y/S_{i-1}}  
\lo {\cal F}\otimes_{{\cal O}_Y}{\Om}^{\bul}_{Y/S_i} \lo 0 
\quad (1\leq i\leq r)
\tag{2.1.1}\label{ali:agaxd}
\end{align*} 
is exact. 
$($Note that the morphism 
\begin{align*} 
{\rm id}_{\cal F}\otimes (d\log t_i\wedge) \col {\cal F}
\otimes_{{\cal O}_Y}
{\Om}^{\bul}_{Y/S_i}[-1] \lo 
{\cal F}\otimes_{{\cal O}_Y}
{\Om}^{\bul}_{Y/S_{i-1}}  
\end{align*} 
is indeed a morphism of complexes of $g^{-1}({\cal O}_S)$-modules.$)$ 
\end{lemm}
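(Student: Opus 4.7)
For (1), log smoothness of $g\col Y\lo S$ implies, locally on $Y$, the existence of a chart realizing $\Om^1_{Y/\os{\circ}{S}}$ as the cokernel of an explicit ${\cal O}_Y$-linear map between free ${\cal O}_Y$-modules of finite rank built from the chart monoids; the resulting cokernel is itself locally free. Since $\Om^i_{Y/\os{\circ}{S}}=\bigwedge^i\Om^1_{Y/\os{\circ}{S}}$, local freeness propagates to every exterior power. The same argument applied to each intermediate log smooth morphism $g_j\col Y\lo S_j$ shows that the sheaves $\Om^k_{Y/S_j}$ are all locally free, a fact I will use in part (2).

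For (2), I start from the observation that $\ol{M}_{S_i/S_{i-1}}$ is locally generated by $t_i$, so $\Om^1_{S_i/S_{i-1}}\simeq {\cal O}_{S_i}\cdot d\log t_i$ and hence $g_i^*\Om^1_{S_i/S_{i-1}}\simeq {\cal O}_Y\cdot d\log t_i$ is free of rank one. Log smoothness of $g_i$ and $g_{i-1}$ then gives a locally split short exact sequence of locally free ${\cal O}_Y$-modules
\begin{equation*}
0\lo {\cal O}_Y\cdot d\log t_i\lo \Om^1_{Y/S_{i-1}}\lo \Om^1_{Y/S_i}\lo 0.
\end{equation*}
Taking the $k$-th exterior power and using the standard identity $\bigwedge^k(L\oplus M)\simeq \bigoplus_{p+q=k}\bigwedge^pL\otimes \bigwedge^qM$ (which, for $L$ of rank one, collapses to a two-term exact sequence) yields short exact sequences
\begin{equation*}
0\lo \Om^{k-1}_{Y/S_i}\os{d\log t_i\we}{\lo}\Om^k_{Y/S_{i-1}}\lo \Om^k_{Y/S_i}\lo 0\quad (k\in {\mab N}).
\end{equation*}
Assembling these over all $k$ and tensoring with the locally free sheaf ${\cal F}$ produces the underlying graded exactness of (\ref{ali:agaxd}).

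It remains to promote this to a short exact sequence of complexes of $g^{-1}({\cal O}_S)$-modules, with the de Rham differentials $d_\nabla$ induced by $\nabla$ on the relative log de Rham complexes. The right arrow is functorial in the base, so it is a chain map by construction. For the left arrow, using $d(d\log t_i)=0$ together with the Leibniz rule for $d_\nabla$, one checks
\begin{equation*}
d_\nabla\bigl(s\otimes (d\log t_i\we \om)\bigr)=-\,d\log t_i\we d_\nabla(s\otimes \om)
\end{equation*}
in ${\cal F}\otimes_{{\cal O}_Y}\Om^{\bul}_{Y/S_{i-1}}$, where the sign matches precisely the shift $[-1]$ on the source complex. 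Hence ${\rm id}_{\cal F}\otimes(d\log t_i\we)$ is indeed a map of complexes, and (\ref{ali:agaxd}) is exact.

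The main obstacle, although essentially routine, is the careful sign bookkeeping and the verification that $\nabla$ descends compatibly along $\Om^1_{Y/\os{\circ}{S}}\twoheadrightarrow \Om^1_{Y/S_j}$ to yield the differentials $d_\nabla$ on the relative log de Rham complexes used above; the geometric input is nothing but the standard splitting of the fundamental exact sequence associated to a tower of log smooth morphisms, together with the local structure of $\ol{M}_S$.
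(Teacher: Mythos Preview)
Your argument is correct and follows the standard route one would expect: local freeness of the log de Rham sheaves from log smoothness of the composite $Y\to S\to S_j\to\os{\circ}{S}$, then the fundamental exact sequence of log differentials for the tower $Y\to S_i\to S_{i-1}$ together with the Koszul filtration on exterior powers (rank-one kernel, so only two graded pieces), and finally the Leibniz check using $d(d\log t_i)=0$ to see that wedging is a morphism of complexes after the shift.

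In the present paper this lemma is not proved but merely recalled from \cite[(3.1)]{nf}, so there is no in-paper proof to compare against. Your direct verification is exactly the kind of argument one expects to find in the cited source; the only point worth a small extra remark is that the intermediate morphisms $Y\to S_j$ are log smooth because each $S\to S_j$ (forgetting some of the hollow factors of the log structure) is log smooth, so your appeal to local freeness of $\Om^k_{Y/S_j}$ and to local splitness of the fundamental sequence is justified.
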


\par 
Let $\{t_1,\ldots,t_r\}$ be a set of local sections of $M_S$ whose images in 
$\ol{M}_S$ is a unique system of local generators of $\ol{M}_S$. 
Set $L_S(t_1,\ldots,t_r):=\bigoplus_{i=1}^r{\cal O}_St_i$ 
(a free ${\cal O}_S$-module with basis $t_1,\ldots,t_r$). 
This ${\cal O}_S$-module patches together to a locally free ${\cal O}_S$-module 
$L_S$ by \cite[(3.3)]{nf}. 
In the following we denote $t_i$ in $L_S$ by $u_i$. 
Because we prefer not to use non-intrinsic variables $u_1,\ldots,u_r$, 
we consider the following sheaf $U_S$ 
of sets of $r$-pieces on $\os{\circ}{S}$ defined by the following presheaf: 
\begin{align*} 
U_S\col \{{\rm open}~{\rm log}~{\rm analytic}~{\rm spaces}~{\rm of}~S\}\owns 
V\lom \{{\rm generators}~{\rm of}~\Gam(V,\ol{M}_S)\}\in 
({\rm Sets}).  
\tag{2.1.2}\label{eqn:bgff}
\end{align*}
Let ${\rm Sym}_{{\cal O}_S}(L_S)$ be the symmetric algebra 
of $L_S$ over ${\cal O}_S$ by $\Gam_{{\cal O}_S}(L_S)$. 
%The ${\cal O}_S$-module $L_S$ depends only on $U_S$. 
We consider the following Hirsch extension 
\begin{equation*}  
{\cal F}\otimes_{{\cal O}_Y}
\Om^{\bul}_{Y/\os{\circ}{S}}[U_S]
:= \Gam_{{\cal O}_S}(L_S)\otimes_{{\cal O}_S}
{\cal F}\otimes_{{\cal O}_Y}
\Om^{\bul}_{Y/\os{\circ}{S}} 
\tag{2.1.3}\label{eqn:blhff}
\end{equation*} 
of ${\cal F}\otimes_{{\cal O}_Y}
\Om^{\bul}_{Y/\os{\circ}{S}}$ 
by the morphism 
\begin{align*} 
d\log \col g^{-1}(L_S)\owns u_i\lom d\log t_i\in 
{\rm Ker}(\Om^1_{Y/\os{\circ}{S}}\lo \Om^2_{Y/\os{\circ}{S}})
\tag{2.1.4}\label{eqn:blff}
\end{align*}  
(\cite[\S3]{nhi}). 
(Here we have omitted to write $g^{-1}$ for  
$g^{-1}(\Gam_{{\cal O}_S}(L_S))\otimes_{g^{-1}({\cal O}_S)}$ 
in (\ref{eqn:blhff}). It may be better to denote ${\cal F}\otimes_{{\cal O}_Y}
\Om^{\bul}_{Y/\os{\circ}{S}}[U_S]$ by 
${\cal F}\otimes_{{\cal O}_Y}
\Om^{\bul}_{Y/\os{\circ}{S}}[L_S]$.)
It is easy to check that the morphism 
$g^{-1}(L_S)\lo {\rm Ker}(\Om^1_{Y/\os{\circ}{S}}\lo \Om^2_{Y/\os{\circ}{S}})$ 
is well-defined (cf.~\cite[\S2]{nb}). 
The boundary morphism 
\begin{equation*} 
\nabla \col  {\cal F}\otimes_{{\cal O}_Y}
\Om^i_{Y/\os{\circ}{S}}[U_S]
\lo  {\cal F}\otimes_{{\cal O}_Y}
\Om^{i+1}_{Y/\os{\circ}{S}}[U_S]
\quad (i\in {\mab Z}_{\geq 0})
\end{equation*}   
is, by definition,  the following ${\cal O}_S$-linear morphism: 
\begin{align*} 
&\nabla(m_1^{[e_1]}\cdots m_r^{[e_r]}\otimes \om)=
\sum_{j=1}^rm_1^{[e_1]}\cdots m_j^{[e_j-1]} \cdots m_r^{[e_r]}
d\log m_j\wedge \om +
m_1^{[e_1]}\cdots  m_r^{[e_r]}\otimes \nabla(\om)
\tag{2.1.5}\label{eqn:bdff}\\
&\quad (~\om \in {\cal F}
\otimes_{{\cal O}_Y}
\Om^j_{Y/\os{\circ}{S}},m_1, \ldots m_r\in L_S, 
e_1,\ldots,e_r\in {\mab Z}_{\geq 1}, m_i^{[e_i]}=((e_i)!)^{-1}m^{e_i}_i~). 
\end{align*}
It is easy to check an equality $\nabla^2=0$.
%Because $\os{\circ}{S}$ is of characteristic $0$, 
We have the following equality 
\begin{align*} 
%{\cal O}_S\langle L_S\rangle=
\Gam_{{\cal O}_S}(L_S)
%=:{\cal O}_S[L_S]
={\cal O}_S[\ol{M}_S]. 
\tag{2.1.6}\label{ali:uaubri} 
\end{align*}   
Let $\eps \col S^{\log}\lo S$ be the real blow up defined in \cite{kn}. 
Then $\Gam_{{\cal O}_S}(L_S)$ is nothing but 
$\eps_*({\cal O}_{S^{\log}})$. 
The sheaf ${\cal O}_S[\ol{M}_S]$ is locally isomorphic to 
a sheaf ${\cal O}_{S}[\ul{u}]:=
{\cal O}_{S}[u_1,\ldots,u_r]$ of polynomial algebras over ${\cal O}_S$ of 
independent $r$-variables. 
The projection $\Gam_{{\cal O}_S}(L_S)={\rm Sym}_{{\cal O}_S}(L_S)
\lo {\rm Sym}^0_{{\cal O}_S}(L_S)={\cal O}_S$
induces the following natural morphism of complexes: 
\begin{equation*} 
{\cal F}\otimes_{{\cal O}_Y}
\Om^{\bul}_{Y/\os{\circ}{S}}[U_S]
\lo {\cal F}\otimes_{{\cal O}_Y}
\Om^{\bul}_{Y/S}.
\tag{2.1.7}\label{ali:uafacui} 
\end{equation*}  
(\cite[\S3]{nhi}). 
Let $\Gam^{\geq n}_{{\cal O}_S}(L_S)$ $(n\in {\mab N})$ 
be the degree $\geq n$-part of $\Gam_{{\cal O}_S}(L_S)$. 
%and let ${\rm Sym}^{n}_{{\cal O}_S}(L_S)$
%be the degree $n$-part of ${\rm Sym}_{{\cal O}_S}(L_S)$ 
We set 
\begin{align*} 
\Gam_{{\cal O}_S}(L_S)^{\wedge}
%:={\cal O}_S[[u_1,\ldots,u_r]]
:=
%\prod_{e_1,\ldots,e_r\in {\mab N}}{\cal O}_Sm_1^{[e_1]}\cdots u_r^{[e_r]}. 
\vpl_n(\Gam_{{\cal O}_S}(L_S)/\Gam^{\geq n}_{{\cal O}_S}(L_S)). 
\tag{2.1.8}\label{ali:urui} 
\end{align*} 
Then we have the following ``inclusion morphism''
\begin{align*} 
\Gam_{{\cal O}_S}(L_S)  \os{\sus}{\lo} \Gam_{{\cal O}_S}(L_S)^{\wedge}. 
\tag{2.1.9}\label{ali:uauri} 
\end{align*} 
We also consider the following PD-Hirsch extension 
by completed powers:  
\begin{equation*}  
{\cal F}\otimes_{{\cal O}_Y}
\Om^{\bul}_{Y/\os{\circ}{S}}[[U_S]]:=
\Gam_{{\cal O}_S}(L_S)^{\wedge}\otimes_{{\cal O}_S}
{\cal F}\otimes_{{\cal O}_Y}
\Om^{\bul}_{Y/\os{\circ}{S}} 
\end{equation*} 
of ${\cal F}\otimes_{{\cal O}_Y}
\Om^{\bul}_{Y/\os{\circ}{S}}$; 
the boundary morphism 
\begin{equation*} 
\nabla \col  {\cal F}
\otimes_{{\cal O}_Y}
\Om^j_{Y/\os{\circ}{S}}[[U_S]]
\lo 
{\cal F}\otimes_{{\cal O}_Y}
\Om^{j+1}_{Y/\os{\circ}{S}}[[U_S]]
\quad (j\in {\mab Z}_{\geq 0})
\tag{2.1.10}\label{eqn:bbaadff}
\end{equation*}   
is an ${\cal O}_S$-linear morphism defined by the following 
\begin{align*} 
\nabla(m_1^{[e_1]}\cdots m_r^{[e_r]}\otimes \om)=&
\sum_{j=1}^rm_1^{[e_1]}\cdots m_j^{[e_j-1]} \cdots m_r^{[e_r]}d\log m_j\wedge \om 
\tag{2.1.11}\label{eqn:baadff}\\
&+
m_1^{[e_1]}\cdots  m_r^{[e_r]}\otimes \nabla(\om) 
%\quad (~\om \in {\cal F}
%\otimes_{{\cal O}_Y}
%\Om^j_{Y/\os{\circ}{S}},e_1,\ldots,e_r\in {\mab N}~).  
\end{align*}
%where $u_j^{[-1]}:=0$. 
as in (\ref{eqn:bdff}). 
It is easy to check an equality $\nabla^2=0$ for $\nabla$ in (\ref{eqn:baadff}).  
The projection 
$$\Gam_{{\cal O}_S}(L_S)/\Gam^{\geq n}_{{\cal O}_S}(L_S)(L_S)
\lo 
\Gam_{{\cal O}_S}(L_S)/\Gam^{\geq 1}_{{\cal O}_S}(L_S)
=\Gam^0_{{\cal O}_S}(L_S)={\cal O}_S \quad  (n\geq 1)$$
induces the following natural morphism of complexes: 
\begin{equation*}  
{\cal F}\otimes_{{\cal O}_Y}
\Om^{\bul}_{Y/\os{\circ}{S}}[[U_S]] \lo 
{\cal F}\otimes_{{\cal O}_Y}
\Om^{\bul}_{Y/S}.
\tag{2.1.12}\label{ali:uaafui} 
\end{equation*}   
The morphism (\ref{ali:uauri}) induces 
the following inclusion morphism  
\begin{align*}  
{\cal F}\otimes_{{\cal O}_Y}
\Om^{\bul}_{Y/\os{\circ}{S}}[U_S]
\os{\sus}{\lo} {\cal F}\otimes_{{\cal O}_Y}
\Om^{\bul}_{Y/\os{\circ}{S}}[[U_S]]. 
\tag{2.1.13}\label{ali:ufaui} 
\end{align*} 
We have the following commutative diagram 
\begin{equation*} 
\begin{CD} 
{\cal F}\otimes_{{\cal O}_Y}
\Om^{\bul}_{Y/\os{\circ}{S}}[U_S]@>{\subset}>>
{\cal F}\otimes_{{\cal O}_Y}
\Om^{\bul}_{Y/\os{\circ}{S}}[[U_S]]\\
@VVV @VVV \\
{\cal F}\otimes_{{\cal O}_Y}
\Om^{\bul}_{Y/S}@={\cal F}\otimes_{{\cal O}_Y}
\Om^{\bul}_{Y/S}. 
\end{CD}
\tag{2.1.14}\label{cd:ufyysi} 
\end{equation*}

The following is a (simpler) log analytic version of \cite[(4.3)]{nhi}:

\begin{defi}[{\rm \cite[(3.4)]{nf}}]\label{defi:wal}
(1) Take a local basis of $\Om^1_{Y/\os{\circ}{S}}$ containing 
$\{d\log t_1,\ldots,d\log t_r\}$. 
Let $(d\log t_i)^*\col \Om^1_{Y/\os{\circ}{S}}\lo {\cal O}_{Y}$ 
be the local morphism defined by the local dual basis of $d\log t_i$. 
We say that the connection $({\cal F},\nabla)$ has {\it no poles along} $S$ 
if the composite morphism 
${\cal F}\os{\nabla}{\lo} {\cal F}\otimes_{Y}\Om^1_{Y/\os{\circ}{S}}
\os{{\rm id}_{\cal F}\otimes(d\log t_i)^*}{\lo} {\cal F}$ vanishes for $1\leq \forall i\leq r$. 
\par 
(2) If there exists a finite increasing filtration 
$\{{\cal F}_i\}_{i\in {\mab Z}}$ on ${\cal F}$ locally on $Y$ 
such that ${\rm gr}_i{\cal F}:={\cal F}_i/{\cal F}_{i-1}$ 
is a locally free ${\cal O}_Y$-module 
and $\nabla$ induces a connection 
${\cal F}_i\lo {\cal F}_i\otimes_{{\cal O}_Y}\Om^1_{Y/\os{\circ}{S}}$ 
whose induced connection 
${\rm gr}_i{\cal F}\lo ({\rm gr}_i{\cal F})
\otimes_{{\cal O}_Y}\Om^1_{Y/\os{\circ}{S}}$  has no poles along $M_S$, 
then we say that $({\cal F},\nabla)$ 
is a {\it locally nilpotent integrable connection on $Y$ with respect to} $S$. 
\end{defi}

\par 
Let us recall the following result:  

\begin{theo}[{\rm \cite[(3.5)]{nf}}]\label{theo:afp}
Assume that $(F,\nabla)$ 
is a locally nilpotent integrable connection on $Y$ with respect to $S$. 
Then the morphism {\rm (\ref{ali:uafacui})} is a quasi-isomorphism. 
\end{theo}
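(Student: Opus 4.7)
The plan is to prove the statement locally on $Y$ and $S$ by a two-step argument: a devissage along the local filtration of Definition~\ref{defi:wal} that reduces to the case where $({\cal F},\nabla)$ has no poles along $S$, followed by a filtration argument in the no-poles case using a formal Poincar\'e lemma for the completed divided-power de Rham complex.

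For the devissage step, work locally on $Y$ and $S$ so that global generators $t_1,\ldots,t_r$ of $M_S$ and the filtration $\{{\cal F}_i\}$ from Definition~\ref{defi:wal} both exist. Since ${\cal F}_i$ and ${\rm gr}_i{\cal F}$ are locally free ${\cal O}_Y$-modules, the short exact sequences $0\to {\cal F}_{i-1}\to {\cal F}_i\to {\rm gr}_i{\cal F}\to 0$ of integrable connections remain exact after tensoring with either of the ${\cal O}_Y$-linear complexes $\Om^{\bul}_{Y/\os{\circ}{S}}[[U_S]]$ or $\Om^{\bul}_{Y/S}$. The five lemma applied to the associated long exact cohomology sequences and induction on the length of the filtration reduce the problem to the case where $\nabla({\cal F})\subset {\cal F}\otimes_{{\cal O}_Y}\Om^1_{Y/S}$, $\Om^1_{Y/S}$ being viewed as a subbundle of $\Om^1_{Y/\os{\circ}{S}}$ via the local splitting $\Om^1_{Y/\os{\circ}{S}}=\Om^1_{Y/S}\oplus\bigoplus_{i=1}^r{\cal O}_Y\, d\log t_i$ provided by the no-poles hypothesis.

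In the no-poles case, define a decreasing filtration $\Phi^{\bul}$ on $C:={\cal F}\otimes_{{\cal O}_Y}\Om^{\bul}_{Y/\os{\circ}{S}}[[U_S]]$ by declaring $u_1^{[e_1]}\cdots u_r^{[e_r]}\cdot f\otimes\om\wedge d\log t_J\in \Phi^k$ if and only if $\sum_i e_i+|J|\geq k$, where $\om\in\Om^{\bul}_{Y/S}$ and $J\subset\{1,\ldots,r\}$. Using the no-poles hypothesis, one checks that each summand of the differential $\nabla$ either preserves the weight $|\underline{e}|+|J|$ (namely, the Hirsch contribution $u_i^{[e]}\mapsto u_i^{[e-1]}d\log t_i$ together with the relative part $\nabla_{Y/S}$) or strictly increases it (the ``vertical'' $d\log t_{i'}$-components of $d\om$ for $\om\in\Om^{\bul}_{Y/S}$); hence $\Phi^{\bul}$ is a filtration by subcomplexes, and the morphism (\ref{ali:uafacui}) coincides with the projection $C\to C/\Phi^1$. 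A direct computation identifies
$$\Phi^k/\Phi^{k+1}\cong ({\cal F}\otimes_{{\cal O}_Y}\Om^{\bul}_{Y/S})\otimes_{{\mab C}}K_{(k)},$$
where $K:=\bigwedge^*(\bigoplus_i{\mab C}\cdot d\log t_i)\otimes_{{\mab C}}\Gam^{\wedge}(\bigoplus_i{\mab C}\cdot u_i)$ carries the differential $u_i^{[e]}\mapsto u_i^{[e-1]}d\log t_i$, and $K_{(k)}$ denotes its weight-$k$ subcomplex (where each $u_i$ and each $d\log t_i$ has weight $1$). By the formal Poincar\'e lemma for completed divided-power algebras, $K$ is a resolution of ${\mab C}$ in form-degree $0$; respecting the weight grading, $K_{(0)}={\mab C}$ while $K_{(k)}$ is acyclic for $k\geq 1$. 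K\"unneth over ${\mab C}$ (applied stalkwise, where $K_{(k)}$ is a flat complex of vector spaces) then makes each ${\rm gr}^k$ acyclic for $k\geq 1$, and induction on $k$ yields acyclicity of $\Phi^1/\Phi^k$ for every $k\geq 1$. Since the filtration is Hausdorff ($\bigcap_k\Phi^k=0$) and complete ($\Phi^1=\vpl_k\Phi^1/\Phi^k$) by the very construction of the completed Hirsch extension, the Milnor short exact sequence forces $H^*(\Phi^1)=0$, and the long exact cohomology of $0\to\Phi^1\to C\to C/\Phi^1\to 0$ finishes the proof.

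The main obstacle will be the careful verification in Step~2 that $\Phi^{\bul}$ is compatible with the total differential -- in particular, that the vertical $d\log t_{i'}$-components appearing when one differentiates a form $\om\in\Om^{\bul}_{Y/S}$ inside $\Om^{\bul}_{Y/\os{\circ}{S}}$ only push elements to strictly higher filtration levels. This is precisely where the no-poles condition on $\nabla({\cal F})$ enters essentially, and it explains why the Step~1 devissage is indispensable before any filtration argument can succeed.
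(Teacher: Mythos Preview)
Your argument proves the wrong statement. The morphism (\ref{ali:uafacui}) in Theorem~\ref{theo:afp} is the augmentation of the \emph{non-completed} Hirsch extension ${\cal F}\otimes_{{\cal O}_Y}\Om^{\bul}_{Y/\os{\circ}{S}}[U_S]$, built from the polynomial algebra $\Gam_{{\cal O}_S}(L_S)$; you instead set $C:={\cal F}\otimes_{{\cal O}_Y}\Om^{\bul}_{Y/\os{\circ}{S}}[[U_S]]$ and work throughout with the completion $\Gam_{{\cal O}_S}(L_S)^{\wedge}$. What you actually establish is (a version of) Proposition~\ref{prop:cc}, not Theorem~\ref{theo:afp}. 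The distinction is not cosmetic: your decisive step is the appeal to completeness of $\Phi^{\bul}$ and the Milnor $\vpl$-sequence, and for the polynomial Hirsch extension the filtration $\Phi^{\bul}$ is Hausdorff but \emph{not} complete---its completion is precisely $[[U_S]]$. So from ``${\rm gr}^k_{\Phi}$ acyclic for $k\geq 1$'' you cannot conclude that the polynomial $\Phi^1$ is acyclic. A homological-perturbation repair does not help either: the Koszul contraction $H$ on $K_{(k)}$ lowers $|J|$ and raises $|\underline{e}|$, while $d_+$ raises $|J|$, so $(d_+H)^n$ raises the $u$-degree by $n$ and the perturbed homotopy $\sum_n(-d_+H)^n$ converges only in $[[U_S]]$.

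Note also that the present paper does not reprove Theorem~\ref{theo:afp}; it is quoted from \cite[(3.5)]{nf}, and the locally nilpotent hypothesis---absent in the completed Proposition~\ref{prop:cc}---is essential there. For comparison, the paper's proof of the completed case (Proposition~\ref{prop:cc}) takes a different route from yours even in that setting: instead of a global weight filtration it peels off one variable at a time, using the short exact sequences (\ref{ali:agaxd}) and (\ref{ali:bagaxd}) to show that each single-variable augmentation is a quasi-isomorphism. Your d\'evissage to the no-poles case and the Koszul identification of the associated graded are fine; the gap is solely the convergence step, which your completeness hypothesis papers over.
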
 

In the rest of this section we consider the functoriality of 
the quasi-isomorphism (\ref{ali:uafacui}). 
Let us recall a formalism of the functoriality in \cite{nf}. 
\par 
Let $S'$ be an analytic family over ${\mab C}$ of log points of virtual dimension $r'$ 
and let $v\col S\lo S'$ be a morphism of log analytic spaces. 
This morphism induces a morphism $v^*\col \ol{M}_{S'}\lo v_*(\ol{M}_{S})$.  
Locally on $S$, this morphism  is equal to a morphism 
$v^*\col {\mab N}^{r'}\lo {\mab N}^r$. 
Set 
$e_i=(0, \ldots,0,\os{i}{1},0,\ldots, 0)\in {\mab N}^s$ for $s=r$ or $r'$  
$(1\leq i\leq r)$. 
Let $A=(a_{ij})_{1\leq i\leq r, 1\leq j\leq r'} \in M_{rr'}({\mab N})$ 
be the representing matrix of $v^*$: 
$$v^*(e_1,\ldots,e_{r'})=(e_1,\ldots,e_r)A.$$ 
Let $t_1,\ldots,t_r$ and $t'_1,\ldots, t'_{r'}$ be local sections of 
$M_S$ and $M_{S'}$ 
whose images in $\ol{M}_S\os{\sim}{\lo} {\mab N}^r$ and 
$\ol{M}{}'_{S'}\os{\sim}{\lo}{\mab N}^{r'}$ are 
$e_1,\ldots,e_r$ and $e_1,\ldots,e_{r'}$, respectively. 
Then there exists a local section $b_j\in {\cal O}_{S}^*$ 
$(1\leq j\leq r')$ such that 
\begin{align*} 
v^*(t'_j)=b_jt^{a_{1j}}_1\cdots t^{a_{rj}}_r. 
\tag{2.3.1}\label{ali:vta} 
\end{align*}
Let $u_1,\ldots, u_r$ and $u'_1,\ldots,u'_{r'}$ be 
the corresponding local sections 
to $t_1,\ldots, t_r$ and $t'_1,\ldots,t'_{r'}$ of 
$L_S$ and $L_{S'}$, respectively. Then we define 
an ${\cal O}_{S'}$-linear morphism 
$v^*\col \Gam_{{\cal O}_{S'}}(L_{S'})\lo v_*(\Gam_{{\cal O}_S}(L_S))$
by the following formula: 
\begin{align*} 
v^*(u_j)=a_{1j}u_1+\cdots+a_{rj}u_r 
\tag{2.3.2}\label{ali:vua} 
\end{align*}
and by ${\rm Sym}(v^*)$ for the $v^*$ in (\ref{ali:vua}). 
Since ${\rm Aut}({\mab N}^s)={\mathfrak S}_s$ (\cite[p.~47]{nh3}), 
the morphism $v^*\col \Gam_{{\cal O}_{S'}}(L_{S'})\lo v_*(\Gam_{{\cal O}_S}(L_S))$
is independent of the choice of the local isomorphisms 
$\ol{M}_{S'}\simeq {\mab N}^{r'}$ and $\ol{M}_{S}\simeq {\mab N}^r$. 
In conclusion, 
we obtain the following well-defined morphism 
\begin{align*} 
v^*\col \Gam_{{\cal O}_{S'}}(L_{S'})\lo 
v_*(\Gam_{{\cal O}_S}(L_S))
\tag{2.3.3}\label{ali:kxvef}
\end{align*} 
of sheaves of commutative rings of unit elements on $S'$.  
This morphism satisfies the usual transitive relation 
\begin{align*} 
{\textrm ``}(v\circ v')^*=
v'{}^*v^*{\textrm '}{\textrm '}.
\tag{2.3.4}\label{ali:kxbvef}
\end{align*}

\par 
Let $g'\col Y'\lo S'$ be an analogous morphism of log analytic spaces to 
$g\col Y\lo S$. 
Assume that we are given a commutative diagram 
\begin{equation*} 
\begin{CD}
Y@>{h}>> Y'\\
@V{g}VV @VV{g'}V \\
S@>{v}>> S'. 
\end{CD}
\tag{2.3.5}\label{ali:ooxas}
\end{equation*}

\begin{prop}[Functoriality]\label{prop:rks}
Assume that we are given a commutative diagram 
\begin{equation*} 
\begin{CD}
{\cal F}'@>{\nabla'}>> {\cal F}'\otimes_{{\cal O}_{Y'}}\Om^1_{Y'/\os{\circ}{S}{}'}\\
@VVV @VVV \\
h_*({\cal F}) @>{h_*(\nabla)}>> h_*({\cal F}\otimes_{{\cal O}_X}\Om^1_{Y/\os{\circ}{S}}), 
\end{CD}
\tag{2.4.1}\label{cd:kgsx}
\end{equation*}
where $({\cal F}',{\nabla'})$ is an analogous connection to 
$({\cal F},{\nabla})$ on $Y'/\os{\circ}{S}{}'$. 
Then the morphism {\rm (\ref{ali:uafacui})} is 
contravariantly functorial for 
the commutative diagrams {\rm (\ref{ali:ooxas})} and {\rm (\ref{cd:kgsx})}. 
This contravariance satisfies the usual transitive relation ``$(i\circ h)^*
=h^*\circ i^*$''. 
\end{prop}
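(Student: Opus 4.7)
The plan is to construct an explicit pullback morphism
\begin{equation*}
(h,v)^*\col {\cal F}'\otimes_{{\cal O}_{Y'}}\Om^{\bul}_{Y'/\os{\circ}{S}{}'}[U_{S'}] \lo h_*\bigl({\cal F}\otimes_{{\cal O}_Y}\Om^{\bul}_{Y/\os{\circ}{S}}[U_S]\bigr)
\end{equation*}
as the tensor product of three natural ingredients: (i) the ring morphism $v^*\col \Gam_{{\cal O}_{S'}}(L_{S'})\lo v_*(\Gam_{{\cal O}_S}(L_S))$ of (\ref{ali:kxvef}), which is defined by the matrix $A$ alone and ignores the unit factors $b_j$; (ii) the ordinary pullback $h^*\col \Om^{\bul}_{Y'/\os{\circ}{S}{}'}\lo h_*(\Om^{\bul}_{Y/\os{\circ}{S}})$ of log differentials; and (iii) the given morphism ${\cal F}'\lo h_*({\cal F})$ compatible with connections via the commutative square (\ref{cd:kgsx}). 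Then I would verify that the resulting morphism commutes with the boundary operator $\nabla$ described in (\ref{eqn:bdff}).

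The decisive compatibility is that $(h,v)^*$ intertwines the Hirsch data $d\log\col L\lo \Om^1_{Y/\os{\circ}{S}}$ on both sides. By (\ref{ali:vta}) we have $h^*(t'_j)=v^*(t'_j)=b_j t_1^{a_{1j}}\cdots t_r^{a_{rj}}$ with $b_j\in {\cal O}_S^*$, so
\begin{equation*}
h^*(d\log t'_j)=d\log b_j+\sum_{i=1}^r a_{ij}\,d\log t_i.
\end{equation*}
However, because the log structure on $\os{\circ}{S}$ is trivial and ${\cal O}_S^*={\cal O}_{\os{\circ}{S}}^*$, the differential $db_j$ vanishes in $\Om^1_{Y/\os{\circ}{S}}$, hence $d\log b_j=0$ there. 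On the other hand the algebraic pullback $v^*(u'_j)=\sum_i a_{ij} u_i$ maps under $d\log$ to $\sum_i a_{ij}\,d\log t_i$. Thus the two routes $u'_j\mapsto v^*(u'_j)\mapsto d\log(v^*(u'_j))$ and $u'_j\mapsto d\log t'_j\mapsto h^*(d\log t'_j)$ coincide. Compatibility of $(h,v)^*$ with the full differential $\nabla$ then follows from the hypothesis (\ref{cd:kgsx}) via the Leibniz-type formula (\ref{eqn:bdff}).

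Next I would verify commutativity of the square
\begin{equation*}
\begin{CD}
{\cal F}'\otimes_{{\cal O}_{Y'}}\Om^{\bul}_{Y'/\os{\circ}{S}{}'}[U_{S'}] @>{(h,v)^*}>> h_*({\cal F}\otimes_{{\cal O}_Y}\Om^{\bul}_{Y/\os{\circ}{S}}[U_S])\\
@VVV @VVV \\
{\cal F}'\otimes_{{\cal O}_{Y'}}\Om^{\bul}_{Y'/S'} @>>> h_*({\cal F}\otimes_{{\cal O}_Y}\Om^{\bul}_{Y/S})
\end{CD}
\end{equation*}
in which the vertical arrows are the augmentations (\ref{ali:uafacui}). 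Since $v^*\col \Gam_{{\cal O}_{S'}}(L_{S'})\to v_*(\Gam_{{\cal O}_S}(L_S))$ carries $\Gam^{\geq 1}_{{\cal O}_{S'}}(L_{S'})$ into $v_*(\Gam^{\geq 1}_{{\cal O}_S}(L_S))$ and thus respects the projection onto the degree-zero part $\Gam^0=\mathcal O$, and since the usual pullback of log differentials is compatible with the surjection $\Om^{\bul}_{?/\os{\circ}{?}}\twoheadrightarrow \Om^{\bul}_{?/?}$, commutativity is immediate.

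Finally, the transitive relation ``$(i\circ h)^*=h^*\circ i^*$'' is a formal consequence: each of the three tensor factors satisfies its own transitivity, for $v^*$ precisely by (\ref{ali:kxbvef}), for the log differentials by the classical functoriality, and for the connection morphism by hypothesis. The main obstacle—resolving into a short observation rather than a serious difficulty—is the apparent discrepancy created by the unit factor $b_j$ in (\ref{ali:vta}): the combinatorial morphism (\ref{ali:vua}) does not see $b_j$, yet the geometric pullback of $d\log t'_j$ involves $d\log b_j$; the triviality of the log structure on $\os{\circ}{S}$ eliminates this discrepancy at the level of $\Om^{\bul}_{Y/\os{\circ}{S}}$.
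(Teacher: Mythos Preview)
Your proposal is correct and is precisely the argument the paper has in mind; the paper's own proof consists of the single word ``Obvious.'' You have simply spelled out the three tensor factors and the one nontrivial compatibility check---that $d\log b_j$ vanishes in $\Om^1_{Y/\os{\circ}{S}}$ since $b_j\in {\cal O}_S^*={\cal O}_{\os{\circ}{S}}^*$ is pulled back from the base---which is exactly the well-definedness alluded to after (\ref{eqn:blff}).
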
 
\begin{proof} 
Obvious. 
\end{proof} 

\section{Results on completed Hirsch extensions}\label{sec:hie} 
In this section we give an easy result (\ref{prop:cc}) 
on completed Hirsch extensions. 
Using this result and the result (\ref{theo:afp}) in the previous section, we see that 
the morphism (\ref{ali:ufaui}) is a quasi-isomorphism.  
Though we do not use this result to obtain the main result in this article, 
it is of independent interest. 
%we use the technique of the proof for this result in the next section, which is necessary 
%for the main result. 
%Hence, even 
If one wishes to know the proof of the main result in this article, 
one can skip this section and 
(\ref{lemm:qslux}) and (\ref{prop:ys}) in the next section. 
\par 

\begin{lemm}\label{lemm:uqse}
Let the notations be as in {\rm (\ref{lemm:qseux})}. 
Let $\{t_1,\ldots,t_r\}$ be a set of local sections of $M_S$ whose images in 
$\ol{M}_S$ is a unique system of local generators of $\ol{M}_S$. 
Denote $t_i$ in $L_S$ by $u_i$. 
For $1\leq i\leq r$, let us consider the completed Hirsch extensions 
${\cal F}\otimes_{{\cal O}_Y}
{\Om}^{\bul}_{Y/S_{i-1}}[[u_{i+1},\ldots,u_r]]$ 
and 
${\cal F}
\otimes_{{\cal O}_Y}
{\Om}^{\bul}_{Y/S_i}[[u_{i+1},\ldots,u_r]]$ of 
${\cal F}\otimes_{{\cal O}_Y}
{\Om}^{\bul}_{Y/S_{i-1}}$ and 
${\cal F}
\otimes_{{\cal O}_Y}
{\Om}^{\bul}_{Y/S_i}$
by the morphisms 
$\bigoplus_{j=i+1}^r{\cal O}_Su_j\owns u_j\lom d\log t_j\in {\Om}^1_{Y/S_{i-1}}$ 
and $\bigoplus_{j=i+1}^r{\cal O}_Su_j\owns u_j\lom d\log t_j\in{\Om}^1_{Y/S_i}$, respectively. 
Then the following sequence 
\begin{align*} 
0  &\lo {\cal F}
\otimes_{{\cal O}_Y}
{\Om}^{\bul}_{Y/S_i}[[u_{i+1},\ldots,u_r]][-1] 
\os{{\rm id}_{\cal F}\otimes (d\log t_i\wedge)}{\lo} 
{\cal F}\otimes_{{\cal O}_Y}
{\Om}^{\bul}_{Y/S_{i-1}}[[u_{i+1},\ldots,u_r]]
\tag{3.1.1}\label{ali:bagaxd}\\
&\lo {\cal F}\otimes_{{\cal O}_Y}{\Om}^{\bul}_{Y/S_i}[[u_{i+1},\ldots,u_r]] \lo 0 
\quad (1\leq i\leq r)
\end{align*} 
is exact. 
\end{lemm}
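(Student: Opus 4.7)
The plan is to reduce the exactness of (\ref{ali:bagaxd}) to the already-known exactness of (\ref{ali:agaxd}) in (\ref{lemm:qseux}). By the very definition of the completed Hirsch extension, one has
\begin{align*}
{\cal F}\otimes_{{\cal O}_Y}\Om^{\bul}_{Y/S_?}[[u_{i+1},\ldots,u_r]]
=R\otimes_{{\cal O}_S}({\cal F}\otimes_{{\cal O}_Y}\Om^{\bul}_{Y/S_?}),
\end{align*}
where $R:=\Gam_{{\cal O}_S}(\bigoplus_{j=i+1}^r{\cal O}_Su_j)^{\wedge}$. Since $d\log t_i$ is independent of the variables $u_{i+1},\ldots,u_r$, the middle morphism of (\ref{ali:bagaxd}) is nothing but ${\rm id}_R\otimes({\rm id}_{\cal F}\otimes(d\log t_i\wedge))$, and hence the three-term sequence of complexes in (\ref{ali:bagaxd}) is obtained from (\ref{ali:agaxd}) by applying $R\otimes_{{\cal O}_S}(-)$.

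It therefore suffices to verify that $R\otimes_{{\cal O}_S}(-)$ preserves the exactness of (\ref{ali:agaxd}). The most transparent approach is componentwise: as a graded ${\cal O}_Y$-module (ignoring the Hirsch differential), one has a decomposition
\begin{align*}
{\cal F}\otimes_{{\cal O}_Y}\Om^n_{Y/S_?}[[u_{i+1},\ldots,u_r]]
=\prod_{e\in{\mab N}^{r-i}}({\cal F}\otimes_{{\cal O}_Y}\Om^n_{Y/S_?})\cdot u^{[e]},
\end{align*}
and the map ${\rm id}_{\cal F}\otimes(d\log t_i\wedge)$ is diagonal with respect to this decomposition, sending the $u^{[e]}$-component to the $u^{[e]}$-component. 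On each component the resulting three-term sequence at de Rham degree $n$ is a copy of (\ref{ali:agaxd}), hence exact by (\ref{lemm:qseux})(2). Taking the product over $e\in{\mab N}^{r-i}$ then preserves exactness: since the presheaf product of sheaves of abelian groups is already a sheaf, sections over any open give a product of short exact sequences of abelian groups, which is exact, so the sequence is even exact as presheaves and a fortiori as sheaves.

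The main technical point to justify carefully is the identification of the completed Hirsch extension with the product $\prod_e(-)\cdot u^{[e]}$. Since the base field has characteristic zero, divided powers and ordinary powers coincide up to unit factorials, so $R$ is locally isomorphic as a sheaf of ${\cal O}_S$-algebras to the formal power series ring ${\cal O}_S[[u_{i+1},\ldots,u_r]]$ via $u^{[e]}\leftrightarrow u^e/e!$. Stalkwise, ${\cal O}_S$ is a Noetherian ring in the analytic setting, and the formal power series ring over a Noetherian ring is flat, so $R\otimes_{{\cal O}_S}(-)$ preserves exactness; this is a cleaner alternative to the componentwise argument above. Once this technical point is settled, the exactness of (\ref{ali:bagaxd}) follows immediately from (\ref{lemm:qseux})(2).
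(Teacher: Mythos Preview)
Your proof is correct and follows essentially the same approach as the paper, which in one line reduces to (\ref{lemm:qseux}) by considering the coefficient of each monomial $u_{i+1}^{[e_{i+1}]}\cdots u_r^{[e_r]}$. Your caution about identifying $R\otimes_{{\cal O}_S}M$ with a product (and the detour through flatness, which has its own wrinkle since stalks need not commute with $\varprojlim$) is well-placed but easily bypassed: in each degree the terms of (\ref{ali:agaxd}) are locally free ${\cal O}_Y$-modules, so the sequence is locally split, and tensoring with $g^{-1}(R)$ over $g^{-1}({\cal O}_S)$ preserves exactness without any further hypothesis.
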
 
\begin{proof} 
This immediately follows from (\ref{lemm:qseux}) by considering 
``coefficient forms'' of each $u_{i+1}^{e_{i+1}} \cdots u_r^{e_r}$ 
$(e_{i+1}, \ldots e_r\in {\mab N})$.  
\end{proof}

%Let the notations be as in the previous section. 
%We need the following result in this article: 

\begin{prop}\label{prop:cc}
The morphism {\rm (\ref{ali:uaafui})} is a quasi-isomorphism. 
\end{prop}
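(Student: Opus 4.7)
The plan is to argue locally on $Y$ and reduce, by local nilpotency, to a setting where the completed Hirsch extension splits as a tensor product of ${\cal F}\otimes\Om^\bul_{Y/S}$ with a formal Poincar\'e-type complex whose augmentation admits an explicit chain contraction.

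First I would apply Definition \ref{defi:wal} to obtain, locally on $Y$, a finite $\nabla$-stable filtration $\{{\cal F}_i\}$ of ${\cal F}$ with each ${\rm gr}_i{\cal F}$ having no poles along $M_S$. Since each graded piece of $\Gam_{{\cal O}_S}(L_S)$ is a locally free ${\cal O}_S$-module of finite rank and products of short exact sequences of modules are exact, the completed Hirsch extension sends short exact sequences of complexes of ${\cal O}_Y$-modules to short exact sequences. Combined with the naturality of (\ref{ali:uaafui}) in $({\cal F},\nabla)$, a five-lemma applied to the attached long exact cohomology sequences reduces the claim to the case where $({\cal F},\nabla)$ has no poles along $M_S$.

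In this no-poles case, the log smoothness of $g$ gives a local splitting $\Om^1_{Y/\os{\circ}{S}}=\Om^1_{Y/S}\oplus\bigoplus_{j=1}^r{\cal O}_Y\,d\log t_j$, and since $\nabla$ takes values in ${\cal F}\otimes_{{\cal O}_Y}\Om^1_{Y/S}$, the log de Rham complex decomposes as
\[
{\cal F}\otimes_{{\cal O}_Y}\Om^\bul_{Y/\os{\circ}{S}}\;\cong\;({\cal F}\otimes_{{\cal O}_Y}\Om^\bul_{Y/S})\otimes_{\mab C}\textstyle\bigwedge^\bul_{\mab C}(d\log t_1,\dots,d\log t_r),
\]
the exterior-algebra factor carrying the zero differential. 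Because that factor is finite-dimensional over ${\mab C}$, it passes through the infinite product implicit in the completion of the Hirsch extension, yielding an identification
\[
({\cal F}\otimes_{{\cal O}_Y}\Om^\bul_{Y/\os{\circ}{S}})[[U_S]]\;\cong\;({\cal F}\otimes_{{\cal O}_Y}\Om^\bul_{Y/S})\,\wh{\otimes}_{\mab C}\,{\cal P}^\bul,
\]
where ${\cal P}^\bul:=\textstyle\bigwedge^\bul_{\mab C}(d\log t_1,\dots,d\log t_r)\otimes_{\mab C}{\mab C}[[u_1,\dots,u_r]]$, equipped with the Hirsch differential $u_j\mapsto d\log t_j$, is nothing other than the formal algebraic de Rham complex of ${\mab C}[[u_1,\dots,u_r]]/{\mab C}$ (identifying $d\log t_j=du_j$). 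Under this identification, the morphism (\ref{ali:uaafui}) becomes ${\rm id}\otimes\pi$ where $\pi\col{\cal P}^\bul\to{\mab C}$ is the augmentation to degree zero.

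Finally I would invoke the formal Poincar\'e lemma for ${\cal P}^\bul$: an explicit ${\mab C}$-linear chain contraction $h$ with $dh+hd={\rm id}-\pi$ can be built from the one-variable contractions $u_j^{[n]}\,d\log t_j\wedge\alpha\mapsto u_j^{[n+1]}\alpha$ (extended by zero on summands not containing $d\log t_j$). Because $h$ strictly increases the $(u_1,\dots,u_r)$-adic filtration, it is continuous in the $u$-adic topology and extends to the completion; tensoring with ${\rm id}$ on ${\cal F}\otimes_{{\cal O}_Y}\Om^\bul_{Y/S}$ then furnishes a chain contraction of ${\rm id}\otimes\pi$ on the completed tensor product, proving that (\ref{ali:uaafui}) is a quasi-isomorphism. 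The main obstacle will be making precise the ``completion versus tensor product'' reshuffling and the extension of $h$ to the completion; both rest on the finite-dimensionality over ${\mab C}$ of the exterior-algebra factor and on the fact that $h$ is $u$-adic filtration-increasing.
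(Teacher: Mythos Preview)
Your argument has a genuine gap: you begin by invoking Definition~\ref{defi:wal} to filter ${\cal F}$ so that each graded piece has no poles, but Proposition~\ref{prop:cc} carries no hypothesis that $({\cal F},\nabla)$ be locally nilpotent. The statement is for an arbitrary integrable connection (\ref{ali:ffoya}), and the paper uses it in that generality (the proof of Proposition~\ref{prop:ys} is ``the same as that of (\ref{prop:cc})'', and there $\ol{\nabla}$ is not assumed nilpotent). Without local nilpotency your reduction step is unavailable, and the tensor decomposition in your second step genuinely fails: the pole part of $\nabla$ contributes $d\log t_j$-components that mix the $\Om^{\bul}_{Y/S}$ and $\bigwedge^{\bul}(d\log t_j)$ factors, so the completed Hirsch extension is no longer a tensor product of complexes.

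The paper's proof is both simpler and strictly more general. It peels off the variables $u_r,u_{r-1},\dots,u_1$ one at a time, using only the short exact sequence (\ref{ali:agaxd}) and its Hirsch-extended version (Lemma~\ref{lemm:uqse}). At each step the one-variable completed extension is realised as the product total complex of the double complex displayed in (\ref{cd:hirfd}); the exactness of (\ref{ali:agaxd}) makes its augmented rows exact, and since the rows are bounded on the right the standard acyclic-assembly argument shows the augmentation is a quasi-isomorphism. This never touches $\nabla$ beyond the fact that $d\log t_i\wedge(-)$ is a morphism of complexes, so no nilpotency or filtration of ${\cal F}$ is needed. (If you want to salvage your approach in full generality, the homological perturbation lemma does it: the cross term coming from the poles of $\nabla$ strictly raises $u$-degree, so the perturbation series converges in the completion --- but this is more work than the paper's induction.)
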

\begin{proof} 
The problem is local. 
We may assume that there exists a sequence (\ref{ali:ffoy}) and 
that $L_S=\oplus_{i=1}^r{\cal O}_Su_i$. 
The complex 
${\cal F}\otimes_{{\cal O}_Y}
{\Om}^{\bul}_{Y/S_{r-1}}[[u_r]]$ is, by definition,  
the following  complex: 
\begin{equation*} 
\begin{CD}  
\cdots @>>> \cdots @>>> \cdots \\
@A{{\rm id}\otimes \nabla}AA @A{{\rm id}\otimes \nabla}AA 
@A{{\rm id}\otimes \nabla}AA \\
{\cal O}_Su^2_r
\otimes_{{\cal O}_S}{\cal F}
@>{(?)'d\log t_r\wedge}>> {\cal O}_Su_r\otimes_{{\cal O}_S}
{\cal F}\otimes_{{\cal O}_Y}
\Om^1_{Y/S_{r-1}}  
@>{(?)'d\log t_r\wedge}>> {\cal F}
\otimes_{{\cal O}_Y}\Om^2_{Y/S_{r-1}} \\
@. @A{{\rm id}\otimes \nabla}AA @A{{\rm id}\otimes \nabla}AA \\
@. {\cal O}_Su_r
\otimes_{{\cal O}_S}{\cal F}@>{(?)'d\log t_r\wedge}>>{\cal F}
\otimes_{{\cal O}_Y}\Om^1_{Y/S_{r-1}} 
\\
@. @. @A{\nabla}AA  \\
@. @. {\cal F} ,
\end{CD} 
\tag{3.2.1}\label{cd:hirfd}
\end{equation*} 
where the horizontal arrow $(?)'d\log t_r\wedge$ is defined by 
$u^{[i]}_r\otimes f\otimes \om \lom 
u^{[i-1]}_r\otimes f\otimes (d\log t_r\wedge \om)$ 
$(f\in {\cal F},\om\in \Om^{\bul}_{Y/S_{r-1}})$. 
This is augmented to the complex 
${\cal F}\otimes_{{\cal O}_Y}\Om^{\bul}_{Y/S}$. 
By (\ref{ali:agaxd}) we see that this augmentation 
\begin{align*} 
{\cal F}\otimes_{{\cal O}_Y}\Om^{\bul}_{Y/S_{r-1}}[[u_r]]
\lo {\cal F}\otimes_{{\cal O}_Y}\Om^{\bul}_{Y/S}
\tag{3.2.2}\label{ali:hiysd}
\end{align*} 
is an isomorphism. 
The complex 
${\cal F}\otimes_{{\cal O}_Y}
{\Om}^{\bul}_{Y/S_{r-2}}[[u_r,u_{r-1}]]=
({\cal F}\otimes_{{\cal O}_Y}
{\Om}^{\bul}_{Y/S_{r-2}}[[u_r]])[[u_{r-1}]]$ is, by definition,  
the following  complex: 
\begin{equation*} 
{\footnotesize{
\begin{CD}  
\cdots @>>> \cdots @>>> \cdots \\
@A{{\rm id}\otimes \nabla}AA @A{{\rm id}\otimes \nabla}AA 
@A{{\rm id}\otimes \nabla}AA \\
{\cal O}_Su^2_{r-1}
\otimes_{{\cal O}_S}{\cal F}[[u_r]]
@>{(?)'d\log t_{r-1}\wedge}>> 
{\cal O}_Su_{r-1}\otimes_{{\cal O}_S}{\cal F}\otimes_{{\cal O}_Y}
\Om^1_{Y/S_{r-2}}[[u_r]]  
@>{(?)'d\log t_{r-1}\wedge}>> {\cal F}
\otimes_{{\cal O}_Y}\Om^2_{Y/S_{r-2}}[[u_r]] \\
@. @A{{\rm id}\otimes \nabla}AA @A{{\rm id}\otimes \nabla}AA \\
@. {\cal O}_Su_{r-1}
\otimes_{{\cal O}_S}{\cal F}[[u_r]]@>{(?)'d\log t_{r-1}\wedge}>>{\cal F}
\otimes_{{\cal O}_Y}\Om^1_{Y/S_{r-2}}[[u_r]] 
\\
@. @. @A{\nabla}AA  \\
@. @. {\cal F}. 
\end{CD}}}
\tag{3.2.3}\label{cd:hirsfd}
\end{equation*} 
The augmentation 
\begin{align*} 
({\cal F}\otimes_{{\cal O}_Y}
{\Om}^{\bul}_{Y/S_{r-2}}[[u_r]])[[u_{r-1}]]\lo {\cal F}\otimes_{{\cal O}_Y}
{\Om}^{\bul}_{Y/S_{r-1}}[[u_r]]
\tag{3.2.4}\label{ali:hfd}
\end{align*} 
is an isomorphism by (\ref{lemm:uqse}). 
Continuing this argument repeatedly, we see that 
the morphism {\rm (\ref{ali:uaafui})} is a quasi-isomorphism. 
\end{proof} 

Though we do not use the following in this article, this is of independent interest 
as in \cite[(3.17) (4)]{ey}: 

\begin{coro}\label{coro:qii}
The morphism {\rm (\ref{ali:ufaui})} is a quasi-isomorphism.  
\end{coro}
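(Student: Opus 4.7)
The plan is to deduce this corollary as a purely formal consequence of the commutative diagram (\ref{cd:ufyysi}) together with the two results already at our disposal, namely Theorem \ref{theo:afp} and Proposition \ref{prop:cc}. Concretely, I would first note that under the standing local nilpotency hypothesis on $({\cal F},\nabla)$ (inherited from Section \ref{sec:da} and used in Theorem \ref{theo:afp}), the left vertical arrow of (\ref{cd:ufyysi}), which is precisely the morphism (\ref{ali:uafacui}), is a quasi-isomorphism by Theorem \ref{theo:afp}. Second, the right vertical arrow of (\ref{cd:ufyysi}), which is the morphism (\ref{ali:uaafui}), has just been shown to be a quasi-isomorphism in Proposition \ref{prop:cc}; crucially, the proof of that proposition via the iterated use of (\ref{ali:bagaxd}) did not require local nilpotency, so no hidden hypothesis is smuggled in.

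Once both vertical arrows are known to be quasi-isomorphisms, the bottom arrow of (\ref{cd:ufyysi}) is literally the identity, so the two-out-of-three property for quasi-isomorphisms in the derived category of $g^{-1}({\cal O}_S)$-modules forces the top arrow, which is the inclusion morphism (\ref{ali:ufaui}) in question, to be a quasi-isomorphism as well. This is the whole argument.

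There is essentially no obstacle to speak of: the corollary is a formal consequence of (\ref{theo:afp}) and (\ref{prop:cc}) once one recognizes that the diagram (\ref{cd:ufyysi}) commutes, which is immediate from the construction of the maps as ${\cal O}_S$-linear extensions compatible with the projections $\Gam_{{\cal O}_S}(L_S)\lo {\cal O}_S$ and $\Gam_{{\cal O}_S}(L_S)^{\wedge}\lo {\cal O}_S$ coming from sending the divided powers $u_i^{[e]}$ to zero for $e\geq 1$. The only point worth stating explicitly in the write-up is that the standing local nilpotency assumption of Section \ref{sec:da} is in force here, so that Theorem \ref{theo:afp} applies; the comparison with the ``ordinary'' Hirsch extension (without completion) is what routes the argument through the special fiber and supplies the needed rigidity.
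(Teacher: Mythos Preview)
Your argument is correct and is exactly the paper's own proof: the paper simply records that the corollary ``immediately follows from (\ref{cd:ufyysi}), (\ref{theo:afp}) and (\ref{prop:cc})'', i.e.\ the same two-out-of-three argument on the commutative square that you spell out. Your remark that the local nilpotency hypothesis is needed only for the invocation of (\ref{theo:afp}) (and not for (\ref{prop:cc})) is also accurate.
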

\begin{proof} 
This immediately follows from (\ref{cd:ufyysi}), (\ref{theo:afp}) and (\ref{prop:cc}). 
\end{proof}

\begin{prop}[Functoriality]\label{prop:raks}
Let the notations be as in {\rm (\ref{prop:rks})}. 
Then the morphism {\rm (\ref{ali:ufaui})} is 
contravariantly functorial for 
the commutative diagrams {\rm (\ref{ali:ooxas})} and {\rm (\ref{cd:kgsx})}. 
This contravariance satisfies the usual transitive relation ``$(i\circ h)^*
=h^*\circ i^*$''. 
\end{prop}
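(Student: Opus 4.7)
The plan is to extend the contravariance of Proposition~\ref{prop:rks} to the completed Hirsch extension by checking that the base morphism $v^* \col \Gam_{{\cal O}_{S'}}(L_{S'}) \lo v_*(\Gam_{{\cal O}_S}(L_S))$ of (\ref{ali:kxvef}) is compatible with the $\Gam^{\geq n}$-filtrations, hence passes to the completions, and then to observe that the resulting morphism fits into a commutative square with the inclusion (\ref{ali:uauri}).

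First, I would recall that $v^*$ is defined on the basis $u'_j \in L_{S'}$ by the formula (\ref{ali:vua}), which sends a degree-$1$ element $u'_j$ to a degree-$1$ element $a_{1j}u_1+\cdots+a_{rj}u_r \in L_S$; since $\Gam_{{\cal O}_S}(L_S)$ is generated (as a divided power algebra) in degree one, ${\rm Sym}(v^*)$ preserves the filtration $\Gam^{\geq n}$. Consequently, $v^*$ induces a unique morphism of sheaves of rings
\[
v^*\col \Gam_{{\cal O}_{S'}}(L_{S'})^{\wedge} \lo v_*(\Gam_{{\cal O}_S}(L_S)^{\wedge})
\]
fitting into a commutative square with (\ref{ali:uauri}). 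This morphism satisfies the transitive relation (\ref{ali:kxbvef}) by the universal property of inverse limits.

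Next, combining this with the given commutative square (\ref{cd:kgsx}) and the morphism $h^*$ on log de Rham complexes (which sends $d\log t'_j$ to $d\log v^*(t'_j) = \sum_i a_{ij}\,d\log t_i + d\log b_j$, where $d\log b_j$ is an exact form coming from ${\cal O}_{S}^*$), one checks termwise that the assignment
\[
m'_1{}^{[e_1]}\cdots m'_{r'}{}^{[e_{r'}]}\otimes \om' \lom v^*(m'_1)^{[e_1]}\cdots v^*(m'_{r'})^{[e_{r'}]}\otimes h^*(\om')
\]
defines a morphism of complexes from ${\cal F}'\otimes_{{\cal O}_{Y'}}\Om^{\bul}_{Y'/\os{\circ}{S}{}'}[[U_{S'}]]$ to $h_*({\cal F}\otimes_{{\cal O}_Y}\Om^{\bul}_{Y/\os{\circ}{S}}[[U_S]])$; the compatibility with the differentials (\ref{eqn:baadff}) follows from the identity $d\log v^*(t'_j)\equiv \sum_i a_{ij}\,d\log t_i$ in $\Om^1_{Y/\os{\circ}{S}}$ (up to the vanishing contribution of $d\log b_j$ in the relative sheaf, which is controlled exactly as in the proof of (\ref{prop:rks})).

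Finally, the inclusion (\ref{ali:ufaui}) is compatible with both Hirsch pullbacks because the symmetric-power $v^*$ and its completion are compatible with (\ref{ali:uauri}) by construction. The transitivity ``$(i\circ h)^* = h^*\circ i^*$'' then follows from (\ref{ali:kxbvef}) and the transitivity of pullback on log de Rham complexes, which is the same statement used for the non-completed version in (\ref{prop:rks}). The main (mild) obstacle is purely bookkeeping: verifying that the infinite series introduced by completion cause no issue, which is assured by the filtration-preserving property of $v^*$ established in the first step.
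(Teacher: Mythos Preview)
Your proposal is correct and supplies exactly the details the paper omits: the paper's own proof is the single word ``Obvious.'' Your unpacking---that $v^*$ preserves the degree filtration on $\Gam_{{\cal O}_{S'}}(L_{S'})$ and hence passes to the completion, that the resulting map is compatible with the inclusion (\ref{ali:uauri}), and that the differential (\ref{eqn:baadff}) is respected because $d\log b_j=0$ in $\Om^1_{Y/\os{\circ}{S}}$---is precisely what makes the statement obvious, so there is nothing further to compare.
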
 
\begin{proof} 
Obvious. 
\end{proof} 

\section{Log analytic families}\label{sec:itst}
In this section we consider a ``thicker'' family 
than the family in the previous two sections. 
\par 
Let the notations be as in the previous section. 
Let $r$ be a positive integer. 
Set $\os{\circ}{\Del}{}^r_{\os{\circ}{S}}:=\os{\circ}{\Del}{}^r\times \os{\circ}{S}$. 
The zero section $\os{\circ}{S}\os{\sus}{\lo} \os{\circ}{\Del}_{\os{\circ}{S}}$ 
defines an fs log structure on $\os{\circ}{\Del}_{\os{\circ}{S}}$. 
Let $\Del_{\os{\circ}{S}}$ be the resulting log analytic space and let 
$\Del^r_{\os{\circ}{S}}$ be the $r$-times product of $\Del_{\os{\circ}{S}}$ over 
$\os{\circ}{S}$. 
Locally on $S$, we have the following exact closed immersion
\begin{align*} 
S\os{\sus}{\lo} \Del^r_{\os{\circ}{S}}
\end{align*} 
of log analytic spaces. 
This immersion fits into the following commutative diagram
\begin{equation*} 
\begin{CD} 
S@>{\sus}>> \Del^r_{\os{\circ}{S}}\\
@VVV @VVV \\
\os{\circ}{S}@=\os{\circ}{S},
\end{CD} 
\end{equation*} 
where the vertical morphisms are natural morphisms. 
\par
By replacing ${\mab A}^r_{\os{\circ}{S}}$ in \cite[(1.1)]{nb} by $\Del^r_{\os{\circ}{S}}$, 
we have a well-defined log analytic space $\ol{S}$ over $\os{\circ}{S}$
with an exact closed immersion $S\os{\sus}{\lo} \ol{S}$ 
fitting into the following commutative diagram over $\os{\circ}{S}$: 
\begin{equation*} 
\begin{CD} 
S@>{\sus}>> \ol{S}\\
@VVV @VV{p}V \\
\os{\circ}{S}@=\os{\circ}{S}. 
\end{CD} 
\tag{4.0.1}\label{ali:fsoy}
\end{equation*} 
Here $p\col \ol{S} \lo \os{\circ}{S}$ is the structural morphism. 
Locally on $S$, $\ol{S}$ is isomorphic to 
a unit polydisc $\Del^r_{\os{\circ}{S}}=\{(t_1,\ldots,t_r)\in {\mab C}^r~\vert~
\vert t_1\vert<1,\ldots,\vert t_r\vert<1\}$ with log structures 
${\mab N}^r\owns e_i\lom t_i\in {\cal O}_{\ol{S}}$ over $\os{\circ}{S}$; 
$\ol{S}$ is obtained by gluing $\{(t_1,\ldots,t_r)\in {\mab C}^r~\vert~
\vert t_1\vert<1,\ldots,\vert t_r\vert<1\}$ by 
the log structure $M_S$. It is important to 
note that, by the construction of $\ol{S}$, we obtain well-defined 
closed 1-forms $d\log t_1,\ldots, d\log t_r\in \Om^1_{\ol{S}/\os{\circ}{S}}$ 
for local sections  $t_1,\ldots,t_r$ of $M_S$ in \S\ref{sec:da}; 
we do not permit to take other differential forms 
$d\log (v_1t_1),\ldots, d\log (v_rt_r)$ for $v_1,\ldots v_r\in {\cal O}_{\ol{S}}^*$; 
we permit to take only $d\log (v_it_i)=d\log t_i\in \Om^1_{\ol{S}/\os{\circ}{S}}$ 
for $v_i\in {\cal O}_{S}^*$. 
This convention is similar to the convention in \cite{nb} in which 
to construct the isomorphism (\ref{ali:ckrxw}), we use 
the prime number $p\in {\cal V}$ (note that $p$ is not necessarily a uniformizer of 
${\cal V}$!) and the isomorphism ``$\rho_p$'' in \cite[(6.3.14)]{nb}. 
\par 
The local morphism 
\begin{align*} 
L_S \owns \sum_{i=1}^rc_i u_i\lom \sum_{i=1}^rc_i t_i\in {\cal O}_{\ol{S}}
\tag{4.0.2}\label{ali:osils} 
\end{align*} 
induces the following well-defined global morphism 
\begin{align*} 
\Gam_{{\cal O}_S}(L_S) \lo  {\cal O}_{\ol{S}}. 
\tag{4.0.3}\label{ali:osls} 
\end{align*} 

\begin{rema}\label{rema:sc}
Let $\eps>0$ be a real number. 
Set $\os{\circ}{\Del}(\eps):=\{z\in {\mab C}~\vert~\vert z\vert <\eps \}$. 
Replacing $\os{\circ}{\Del}$ by $\os{\circ}{\Del}(\eps)$, we obtain a log anaytic space 
$\ol{S}(\eps)$ (a tubular neighborhood of $S$ in $\ol{S}$). 
If $\os{\circ}{S}$ is quasi-compact and if $\eps$ is small enough, 
then we have a natural immersion 
\begin{align*} 
\ol{S}(\eps)\os{\sus}{\lo} \ol{S}. 
\tag{4.1.1}\label{ali:ollss} 
\end{align*} 
In this case, for a morphism ${\cal Z}\lo \ol{S}$, we denote 
$f\times_{\ol{S}}\ol{S}(\eps)\col 
{\cal Z}\times_{\ol{S}}\ol{S}(\eps)\lo S(\eps)$ by 
\begin{align*} 
f(\eps)\col {\cal Z}(\eps)\lo S(\eps).
\tag{4.1.2}\label{ali:olzlss} 
\end{align*}  
\end{rema}

\par 
In this section we assume that  the morphism 
$g\col Y\lo S$ in the previous section fits into 
the following cartesian diagram 
\begin{equation*} 
\begin{CD} 
Y@>{\sus}>> {\cal Y}\\
@V{g}VV @VV{h}V \\
S@>{\sus}>> {\ol{S}}, 
\end{CD} 
\tag{4.1.2}\label{ali:ffloy}
\end{equation*} 
where the morphism $h\col {\cal Y}\lo \ol{S}$ is log smooth.  
%Let $M_{\ol{S}}$ be the log structure of $\ol{S}$. 
%Locally on $S$,  there exists a family  
%$\{t_i\}_{i=1}^r$ of local sections of $M_{\ol{S}}$ 
%giving a local basis of $M_{\ol{S}}/{\cal O}_{\ol{S}}^*$. 
%Set $\wt{M}_i:=(t_i) \subset M_{\ol{S}}$. 
%For any $1\leq \forall i\leq r$, the ideal sheaf $\oplus_{j=1}^i\wt{M}_j$ and 
%$\os{\circ}{\ol{S}}$ 
%define an fs log analytic family. 
%Let $\wt{S}_i:=\ol{S}(\wt{M}_1,\ldots,\wt{M}_i)=
%(\os{\circ}{\ol{S}},(\oplus_{j=1}^i\wt{M}_j\lo {\cal O}_{\ol{S}})^a)$ 
%be the resulting local log analytic space. Set $\ol{S}_0:=\os{\circ}{S}$. 
%Locally on $\ol{S}$, we can define a sequence of fs log analytic spaces:  
%\begin{align*} 
%\ol{S}=\ol{S}_r\lo \ol{S}_{r-1}\lo \ol{S}_{r-2}\lo \cdots \lo 
%\ol{S}_1\lo \ol{S}_0=\os{\circ}{S}.
%\tag{4.0.2}\label{ali:fflsoy}
%\end{align*} 
%as in (\ref{ali:ffoy}). 
Let $U_S$ and $L_S$ be as in \S\ref{sec:da}. 
%As in \S\ref{sec:da}, we can define a sheaf $U_S$ on $\ol{S}$ 
%of $r$-pieces and a locally free ${\cal O}_{\ol{S}}$-module 
%$L_{\ol{S}}$ of rank $r$. 
By abuse of notation, 
denote by $d\log t_i\in \Om^1_{{\cal Y}/\os{\circ}{S}}$ the image of 
$d\log t_i$ by the morphism $h^*(\Om^1_{\ol{S}/\os{\circ}{S}})\lo 
\Om^1_{{\cal Y}/\os{\circ}{S}}$. 
Let $\ol{\cal F}$ be a locally free ${\cal O}_{\cal Y}$-module.
Let 
\begin{align*} 
\ol{\nabla}\col  \ol{\cal F}\lo 
\ol{\cal F}\otimes_{{\cal O}_{\cal Y}}\Om^1_{{\cal Y}/\os{\circ}{S}}
\tag{4.1.3}\label{ali:fffoy}
\end{align*} 
be an integrable connection.
As in the previous section, we can define the Hirsch extension 
$\ol{\cal F}\otimes_{{\cal O}_{\cal Y}}
\Om^{\bul}_{{\cal Y}/\os{\circ}{S}}[[U_S]]$ of 
the log de Rham complex
$\ol{\cal F}\otimes_{{\cal O}_{\cal Y}}\Om^{\bul}_{{\cal Y}/\os{\circ}{S}}$ 
by using the well-defined morphism 
$L_S\owns u_j:=t_j\lom d\log t_j\in \Om^1_{{\cal Y}/\os{\circ}{S}}$: 
\begin{align*} 
\ol{\cal F}\otimes_{{\cal O}_{\cal Y}}
\Om^{\bul}_{{\cal Y}/\os{\circ}{S}}[[U_S]]
&:=\Gam_{{\cal O}_S}(L_S)\otimes_{{\cal O}_S}
\ol{\cal F}\otimes_{{\cal O}_{\cal Y}}
\Om^{\bul}_{{\cal Y}/\os{\circ}{S}}
\tag{4.1.4}\label{ali:ffaloy}\\
&:=
(p\circ h)^{-1}(\Gam_{{\cal O}_S}(L_S)\otimes_{(p\circ h)^{-1}({\cal O}_S)}
\ol{\cal F}\otimes_{{\cal O}_{\cal Y}}
\Om^{\bul}_{{\cal Y}/\os{\circ}{S}}. 
\end{align*} 
We also have the log de Rham complex 
$\ol{\cal F}\otimes_{{\cal O}_{\cal Y}}\Om^{\bul}_{{\cal Y}/S}$. 
\par 
The following remark is important: 
\begin{rema}
In the definition (\ref{ali:ffaloy}), 
we consider $L_{S}$ as a sheaf on $\os{\circ}{S}$ {\it not} on $S$. 
The reader should also note that we do not consider 
$\ol{M}_{\ol{S}}:=M_{\ol{S}}/{\cal O}_{\ol{S}}^*$, $U_{\ol{S}}$ 
nor $L_{\ol{S}}$ because the sheaf $\ol{M}_{\ol{S}}$ is {\it not} locally constant.  
\end{rema}
%the log de Rham complex
%$\ol{\cal F}\otimes_{{\cal O}_{\cal Y}}\Om^{\bul}_{{\cal Y}/\os{\circ}{S}}$ 
%by using the well-defined morphism 
%$L_{\ol{S}}\owns u_j:=t_j\lom d\log t_j\in \Om^1_{{\cal Y}/\os{\circ}{S}}$: 

\begin{lemm}\label{lemm:qslux}
Let the notations be as in {\rm (\ref{lemm:qseux})}. 
Let $\{t_1,\ldots,t_r\}$ be a set of local sections of $M_S$ whose images in 
$\ol{M}_S$ is a unique system of local generators of $\ol{M}_S$. 
Denote $t_i$ in $L_S$ by $u_i$. 
Then the following hold$:$ 
\par 
$(1)$ The sheaf $\Om^i_{\ol{\cal Y}/\os{\circ}{S}}$ $(i\in {\mab N})$ is a locally free 
${\cal O}_{\ol{\cal Y}}$-module. 
\par 
$(2)$ Locally on $\ol{S}$, the following sequence 
\begin{align*} 
0  \lo \ol{\cal F}
\otimes_{{\cal O}_{\cal Y}}
{\Om}^{\bul}_{{\cal Y}/\ol{S}_i}[-1] 
\os{{\rm id}_{\cal F}\otimes (d\log t_i\wedge)}{\lo} 
\ol{\cal F}\otimes_{{\cal O}_{\cal Y}}
{\Om}^{\bul}_{{\cal Y}/\ol{S}_{i-1}}  
\lo \ol{\cal F}\otimes_{{\cal O}_{\cal Y}}{\Om}^{\bul}_{{\cal Y}/\ol{S}_i} \lo 0 
\quad (1\leq i\leq r). 
\tag{4.3.1}\label{ali:cagaxd}
\end{align*} 
is exact. 
\par 
$(3)$ 
For $1\leq i\leq r$, let us consider the completed Hirsch extensions 
$\ol{\cal F}\otimes_{{\cal O}_{\cal Y}}
{\Om}^{\bul}_{{\cal Y}/\ol{S}_{i-1}}[[u_{i+1},\ldots,u_r]]$ 
and 
$\ol{\cal F}\otimes_{{\cal O}_{\cal Y}}
\Om^{\bul}_{{\cal Y}/\ol{S}_i}[[u_{i+1},\ldots,u_r]]$ of 
$\ol{\cal F}\otimes_{{\cal O}_{\cal Y}}
\Om^{\bul}_{{\cal Y}/\ol{S}_{i-1}}$ and 
$\ol{\cal F}\otimes_{{\cal O}_{\cal Y}}\Om^{\bul}_{{\cal Y}/\ol{S}_i}$
by the morphisms 
$\bigoplus_{j=i+1}^r{\cal O}_{\ol{S}}u_j\owns u_j\lom d\log t_j\in 
\Om^1_{{\cal Y}/\ol{S}_{i-1}}$ 
and $\bigoplus_{j=i+1}^r{\cal O}_{\ol{S}}
u_j\owns u_j\lom d\log t_j\in \Om^1_{{\cal Y}/\ol{S}_i}$, 
respectively. 
Then the following sequence 
\begin{align*} 
0  &\lo \ol{\cal F}
\otimes_{{\cal O}_{\cal Y}}
\Om^{\bul}_{{\cal Y}/\ol{S}_i}[[u_{i+1},\ldots,u_r]][-1] 
\os{{\rm id}_{\cal F}\otimes (d\log t_i\wedge)}{\lo} 
\ol{\cal F}\otimes_{{\cal O}_{\cal Y}}
{\Om}^{\bul}_{{\cal Y}/\ol{S}_{i-1}}[[u_{i+1},\ldots,u_r]]
\tag{4.3.2}\label{ali:afxd}\\
&\lo 
\ol{\cal F}\otimes_{{\cal O}_{\cal Y}}
\Om^{\bul}_{{\cal Y}/\ol{S}_i}[[u_{i+1},\ldots,u_r]] \lo 0 
\quad (1\leq i\leq r)
\end{align*} 
is exact.
\end{lemm}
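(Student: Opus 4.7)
The plan is to mirror the arguments of Lemma \ref{lemm:qseux} and Lemma \ref{lemm:uqse} in the present ``thicker'' setting, with the base $\ol{S}$ playing the role of $S$ and ${\cal Y}$ playing the role of $Y$ (I read the subscript $\ol{\cal Y}$ in (1) as a typo for ${\cal Y}$, since only ${\cal Y}$ is defined in the set-up).

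For (1), the key observation is that $\ol{S}$ is log smooth over $\os{\circ}{S}$: by the construction in (\ref{ali:fsoy}), locally $\ol{S}$ is isomorphic to the polydisc $\Del^r_{\os{\circ}{S}}$ with the toric log structure given by the coordinate divisors, which is manifestly log smooth over $\os{\circ}{S}$. Since $h\col {\cal Y}\lo \ol{S}$ is log smooth by hypothesis, the composition ${\cal Y}\lo \os{\circ}{S}$ is log smooth, so $\Om^1_{{\cal Y}/\os{\circ}{S}}$ is a locally free ${\cal O}_{\cal Y}$-module and its exterior powers are therefore also locally free.

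For (2), the strategy is to produce, locally on $\ol{S}$, a locally split short exact sequence
\begin{equation*}
0\lo {\cal O}_{\cal Y}\cdot d\log t_i \lo \Om^1_{{\cal Y}/\ol{S}_{i-1}} \lo \Om^1_{{\cal Y}/\ol{S}_i}\lo 0
\end{equation*}
coming from the local description of $\ol{S}_i\lo \ol{S}_{i-1}$ (which kills the log generator $t_i$) combined with the log smoothness of $h$. Taking exterior powers, tensoring with $\ol{\cal F}$ over ${\cal O}_{\cal Y}$, and expressing $\Om^{\bul}_{{\cal Y}/\ol{S}_{i-1}}$ as the exterior algebra built from this length-two sequence over $\Om^{\bul}_{{\cal Y}/\ol{S}_i}$ then yields (\ref{ali:cagaxd}). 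The fact that ${\rm id}_{\ol{\cal F}}\otimes (d\log t_i\we -)$ is a morphism of complexes uses only that $d\log t_i$ is closed, exactly as in the proof of Lemma \ref{lemm:qseux}.

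For (3), I would imitate the proof of Lemma \ref{lemm:uqse}: a local section of the completed Hirsch extension expands uniquely as a formal series $\sum_{\ul{e}} \om_{\ul{e}}\, u_{i+1}^{[e_{i+1}]}\cdots u_r^{[e_r]}$ in divided powers, and the exactness of (\ref{ali:afxd}) is tested coefficient-by-coefficient, reducing immediately to the exactness already established in (2). The main substantive step is therefore (2) --- setting up the locally split short exact sequence of relative log differentials over the thicker base and verifying that the splittings are compatible with the tower $\ol{S}_r\lo \ol{S}_{r-1}\lo \cdots \lo \os{\circ}{S}$; once this is in place, parts (1) and (3) are essentially formal.
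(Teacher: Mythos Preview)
Your proposal is correct and follows essentially the same approach as the paper: the paper's own proof simply states that the argument is the same as that of \cite[(3.1)]{nf} (which is Lemma~\ref{lemm:qseux} here) for parts (1) and (2), and the same as Lemma~\ref{lemm:uqse} for part (3), which is exactly the plan you have written out. Your reading of $\ol{\cal Y}$ as a typo for ${\cal Y}$ is also correct.
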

\begin{proof} 
The proof is the same as that of \cite[(3.1)]{nf} and (\ref{lemm:uqse}); 
the proof is easy. 
\end{proof}

\begin{prop}\label{prop:ys}
The natural morphism 
\begin{align*} 
\ol{\cal F}\otimes_{{\cal O}_{\cal Y}}
\Om^{\bul}_{{\cal Y}/\os{\circ}{S}}[[U_S]]
\lo \ol{\cal F}\otimes_{{\cal O}_{\cal Y}}
\Om^{\bul}_{{\cal Y}/\ol{S}}
\tag{4.4.1}\label{ali:fouu}
\end{align*} 
is a quasi-isomorphism. 
This quasi-isomorphism is contravariantly functorial for a morphism from 
the commutative diagram {\rm (\ref{ali:ffloy})} to a similar commutative diagram 
and a morphism to the integrable connection 
{\rm (\ref{ali:fffoy})} from a similar integrable connection on the similar diagram. 
\end{prop}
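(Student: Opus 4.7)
The plan is to mimic the proof of (\ref{prop:cc}) essentially verbatim, with the exact sequences (\ref{ali:afxd}) from (\ref{lemm:qslux})(3) playing the role that (\ref{ali:bagaxd}) plays there. After reducing to the local situation where $L_S=\bigoplus_{i=1}^r{\cal O}_Su_i$ (with $u_i$ corresponding to the chosen generator $t_i$ of $\ol{M}_S$) and the filtration $\ol{S}=\ol{S}{}_r\lo\ol{S}{}_{r-1}\lo\cdots\lo\ol{S}{}_0$ of (\ref{lemm:qslux}) exists globally, the source of (\ref{ali:fouu}) becomes an iterated completed Hirsch extension in the variables $u_1,\ldots,u_r$.

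Next I would run the telescoping induction of (\ref{prop:cc}). For $0\leq i\leq r$ set
$${\cal K}_i:=\ol{\cal F}\otimes_{{\cal O}_{\cal Y}}\Om^{\bul}_{{\cal Y}/\ol{S}{}_i}[[u_{i+1},\ldots,u_r]],$$
so that ${\cal K}_r=\ol{\cal F}\otimes_{{\cal O}_{\cal Y}}\Om^{\bul}_{{\cal Y}/\ol{S}}$ is the target of (\ref{ali:fouu}) and ${\cal K}_0$ is identified with its source via the local trivialization. I claim that each natural augmentation ${\cal K}_{i-1}\lo {\cal K}_i$ is a quasi-isomorphism. Indeed ${\cal K}_{i-1}$ is the total complex of a double complex whose $e$-th column is $u_i^{[e]}\otimes_{{\cal O}_S}{\cal K}_i$ and whose horizontal differential sends $u_i^{[e]}\otimes \om$ to $u_i^{[e-1]}\otimes(d\log t_i\we \om)$, exactly as in the diagrams (\ref{cd:hirfd}) and (\ref{cd:hirsfd}) used in the proof of (\ref{prop:cc}); the column-wise short exact sequence provided by (\ref{ali:afxd}) then allows the same telescoping argument as in (\ref{prop:cc}) to identify the augmentation with a quasi-isomorphism. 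Composing the $r$ augmentations yields the quasi-isomorphism (\ref{ali:fouu}).

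Contravariant functoriality is formal: the log differential forms, the Hirsch-coefficient morphism (\ref{ali:kxvef}) (satisfying the transitivity (\ref{ali:kxbvef})), and the pull-back of the connection are all natural in the data. Consequently (\ref{ali:fouu}) transports along any morphism of the diagram (\ref{ali:ffloy}) equipped with a compatible morphism of integrable connections, and the usual transitive relation $(i\circ h)^*=h^*\circ i^*$ is inherited from the individual transitivities of its building blocks.

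The one technical point, which is the main obstacle, is that one cannot run the induction on the uncompleted complexes alone and then pass to the limit naively; one needs the completion-compatible short exact sequence (\ref{ali:afxd}) from (\ref{lemm:qslux})(3), which is not a formal consequence of the uncompleted version (\ref{ali:cagaxd}) in (\ref{lemm:qslux})(2). Once (\ref{lemm:qslux})(3) is granted, the argument reduces to the homological algebra already carried out in the proof of (\ref{prop:cc}).
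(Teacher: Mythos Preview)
Your proposal is correct and follows exactly the paper's own proof, which reads in full: ``By using (\ref{lemm:qslux}), the proof is the same as that of (\ref{prop:cc}).'' One small slip: the $e$-th column of your double complex for ${\cal K}_{i-1}$ should be $u_i^{[e]}\otimes_{{\cal O}_S}\ol{\cal F}\otimes_{{\cal O}_{\cal Y}}\Om^{\bul}_{{\cal Y}/\ol{S}_{i-1}}[[u_{i+1},\ldots,u_r]]$, not $u_i^{[e]}\otimes_{{\cal O}_S}{\cal K}_i$, since the horizontal differential $d\log t_i\wedge$ would vanish on the latter (one has $d\log t_i=0$ in $\Om^1_{{\cal Y}/\ol{S}_i}$); with that correction your telescoping argument is precisely the one encoded in (\ref{cd:hirfd})--(\ref{cd:hirsfd}).
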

\begin{proof}
By using (\ref{lemm:qslux}), 
the proof is the same as that of (\ref{prop:cc}). 
\end{proof}

\par
Set ${\cal F}:=\ol{\cal F}\otimes_{{\cal O}_{\cal Y}}{\cal O}_Y$. 
Then the connection (\ref{ali:fffoy}) induces the following connection: 
\begin{align*} 
\nabla\col {\cal F}\lo 
{\cal F}\otimes_{{\cal O}_{\cal Y}}\Om^1_{Y/\os{\circ}{S}}. 
\tag{4.4.2}\label{ali:fffaoy}
\end{align*} 
Indeed, this is a local question. 
We may assume that $\ol{S}=\Del^r_{\os{\circ}{S}}$. 
Then $\ol{\nabla}(t_if)=dt_if+t_i\ol{\nabla}(f)=t_i(fd\log t_i+\ol{\nabla}(f))$ 
$(f\in \ol{\cal F})$. 
Hence $\ol{\nabla}$ induces the connection $\nabla$ in (\ref{ali:fffaoy}).

\section{SNCL analytic spaces}\label{sec:lp}
Let $r$ be a positive integer. 
Let $S$ be a log analytic family of log points of virtual dimension $r$. 
In this section we give a definition of 
an (S)NCL(=(simple) normal crossing log) analytic space 
over $S$. 
%Here LP stands for ``locally product''. 
To define it, we have only to mimic the definition of 
an (S)NCL(=(simple) normal crossing log) analytic scheme defined 
%over $S$ suitably which is a log analytic version of 
%the definition of an (S)NCL scheme 
in \cite{nb} suitably. 
An (S)NCL analytic space over $S$ 
is locally the finitely many product of (S)NCL analytic spaces in the usual sense. 
The definition is an (obvious) generalization of the definition of 
a (generalized) semistable analytic space over a log point of ${\mab C}$ 
virtual dimension $r$ in \cite{fc} and \cite{fr}.
%As in \cite{fr}, we call a generalized semistable analytic space 
%a semistable analytic space simply. 

\par 
Let $B$ be an analytic space over ${\mab C}$. 
Let $a_1<\ldots <a_r$ be nonnegative integers. 
Set ${\bf a}:=(a_1,\ldots a_r)$ 
and let $d$ be a nonnegative integer such that 
$a_r\leq d+r$. Let $z_1,\ldots, z_{d+r}$ be a system of 
standard coordinates of $\os{\circ}{\Del}{}^{d+r}$. 
Let 
$\os{\circ}{\Del}_B({\bf a},d+r)$ be a closed analytic space of 
$\os{\circ}{\Del}{}^{d+r}\times B$ defined by an ideal sheaf 
$$(z_1\cdots z_{a_1},z_{a_1+1}\cdots z_{a_2}, \ldots,z_{a_{r-1}+1}\cdots z_{a_r}).$$ 

\begin{defi}\label{defi:zbsnc}
Let $Z$ be an analytic space over $B$ 
with structural morphism $g \col Z \lo B$. 
We call $Z$ an
{\it $($S$)$NC$($=$($simple$)$ normal crossing$)$ analytic space} over 
$B$ if $Z$ is a union of (smooth) analytic space 
$\{Z_{\lam}\}_{\lam \in \Lam}$ over $B$ 
($\Lam$ is a set) 
and if, for any point of $x \in Z$, 
there exist an open neighborhood $V$ of 
$x$ and an open neighborhood $W$ of 
$g(x)$ such that there exists an \'{e}tale morphism  
$\pi \col V \lo \os{\circ}{\Del}_B({\bf a},d+r)$ 
such that 
\begin{equation*} 
\{Z_{\lam}\vert_{V}\}_{\lam \in \Lam} 
=\{\pi^*(z_{b_1})=\cdots=\pi^*(z_{b_r})=0\}_{1\leq b_1\leq a_1,\ldots, 
a_{r-1}+1\leq b_r\leq a_r}
\tag{5.1.1}\label{eqn:defilsnc}
\end{equation*}   
where $a_1,\ldots,a_r$ and $d$ are nonnegative integers 
such that $\sum_{i=1}^ra_i\leq d+r$,  which depend on 
a local neighborhood of $z$ in $Z$ and 
$Z_{\lam}\vert _V:=Z_{\lam}\cap V$.  
The strict meaning of the equality (\ref{eqn:defilsnc})
is as follows.  
There exists a subset $\Lam(V)$ of $\Lam$ such that
there exists a bijection $i\col \Lam(V) \lo \{1,\ldots,a_1\}\times 
\cdots \times\{a_{r-1}+1,\ldots,a_r\}$ 
such that, if 
$\lam \not\in \Lam(V)$, then 
$Z_{\lam}\vert_{V}=\emptyset$ 
and if $\lam \in \Lam(V)$, then 
$Z_{\lam}$ is a closed analytic space defined by 
the ideal sheaf $(\pi^*(z_{b_1}),\ldots,\pi^*(z_{b_r}))$.  
We call the set $\{Z_{\lam}\}_{\lam \in \Lam}$ 
a {\it decomposition of $Z$ by $($smooth$)$ components of} 
$Z$ over $B$. In this case, we call $Z_{\lam}$ a 
{\it $($smooth$)$ component} of $Z$ over $B$. 
\end{defi} 

\parno
Let $Z$ be an SNC analytic space over $B$. 
For a nonnegative integer $k$, 
set 
\begin{equation}
Z_{\{\lam_0, \lam_1,\ldots \lam_k\}} 
:=Z_{\lam_0}\cap \cdots \cap Z_{\lam_k} \quad 
(\lam_i \not= \lam_j~{\rm if}~i\not= j) 
\tag{5.1.2}\label{eqn:parlm}
\end{equation}
and set
\begin{equation}
Z^{(k)} =  
\us{\{\lam_0, \ldots,  \lam_{k}~\vert~\lam_i 
\not= \lam_j~(i\not=j)\}}{\coprod}
Z_{\{\lam_0, \lam_1, \ldots, \lam_k\}}.   
\tag{5.1.3}\label{eqn:kfdintd}
\end{equation} 
For a negative integer $k$, set 
$Z^{(k)}=\emptyset$.  
Set $\Gam:=\{Z_{\lam}\}_{\lam \in \Lam}$ and 
$\Gam_{V}:=\{Z_{\lam}\vert_{V}\}_{\lam \in \Lam}$
for an open analytic space $V$ of $Z$.

First assume that $M_S$ is the free hollow log structure (\cite{nb}). 
We fix an isomorphism 
\begin{align*} 
(M_S,\al_S)\simeq ({\mab N}^r\oplus {\cal O}_S^*\lo {\cal O}_S) 
\tag{5.1.4}\label{ali:nsos} 
\end{align*} 
globally on $S$. 
Let $M_S({\bf a},d+r)$ be the log structure on 
$\os{\circ}{\Del}_{\os{\circ}{S}}({\bf a},d+r)$ associated to the following morphism 
\begin{equation*} 
{\mab N}^{{\oplus}a_r}\owns 
%e_i:=
(0, \ldots,0,\os{i}{1},0,\ldots, 0)\lom z_i \in 
{\cal O}_{\os{\circ}{\Del}_{\os{\circ}{S}}({\bf a},d+r)}.
\tag{5.1.5}  
\end{equation*} 
Let ${\Del}_B({\bf a},d+r)$ be the resulting log analytic space over $S$.
We have the ``multi-diagonal'' morphism 
${\mab N}^r\lo {\mab N}^{a_1}\oplus {\mab N}^{a_2-a_1}
\oplus \cdots \oplus {\mab N}^{a_r-a_{r-1}}
={\mab N}^{a_r}$ 
induces a morphism ${\Del}_S({\bf a},d+r) \lo S$ of log analytic spaces.  
We call ${\Del}_S({\bf a},d+r)$ the {\it standard NCL analytic space}.

\par 
Let $S$ be a family of log points of virtual dimension $r$  
($M_S$ is not necessarily free). 
%Let $S'\lo S$ be a family of log points. 
%($M_S$ and $M_{S'}$ are not necessarily free). 
%{\it locally free log structure of rank} $1$ on $\os{\circ}{S}$ 
%and we call $S$ {\it a family of log points}. 

\begin{defi}\label{defi:lfac}  
Let $f \col X(=(\os{\circ}{X},M_X)) \lo S$ be a morphism of 
log analytic spaces such that $\os{\circ}{X}$ is 
an (S)NC analytic space over $\os{\circ}{S}$ with a decomposition 
$\Del:=\{\os{\circ}{X}_{\lam}\}_{\lam \in \Lam}$ 
of $\os{\circ}{X}/\os{\circ}{S}$ by its smooth components. 
We call $f$ (or $X/S$) an {\it NCL$($=normal crossing log$)$ analytic space} if, 
for any point of $x \in \os{\circ}{X}$, 
there exist an open neighborhood $\os{\circ}{V}$ of 
$x$ and an open neighborhood $\os{\circ}{W}$ of 
$\os{\circ}{f}(x)$ such that $M_W$ is 
the free hollow log structure of rank $r$ 
and such that  $f\vert_V$ factors through 
a strict \'{e}tale morphism 
$\pi \col V {\lo} {\Del}_S({\bf a},d+r)$ over $W$
for some ${\bf a},d$ depending on $\os{\circ}{V}$
such that $\Del_{\os{\circ}{V}}
=\{\pi^*(z_{b_1})=\cdots=\pi^*(z_{b_r})=0\}_{1\leq b_1\leq a_1,\ldots, 
a_{r-1}+1\leq b_r\leq a_r}$ in $\os{\circ}{V}$.  
If $\os{\circ}{X}/\os{\circ}{S}$ is an SNC analytic space, then 
we call $f$ or $X/S$ an {\it SNCL analytic space}. 
\end{defi}

In the case $r=1$, an SNCL analytic space is an SNCL analytic space 
in the usual sense (\cite{fup}, \cite{nhi}).

%\begin{prop}\label{prop:cls}
%Let $f \col X \lo S$ be a GSNCL analytic space 
%with a decomposition 
%$\Del:=\{\os{\circ}{X}_{\lam}\}_{\lam \in \Lam}$ 
%of $\os{\circ}{X}/\os{\circ}{S}$ by its smooth components. 
%Then there exists canonical subsheaf $M_1(f),\ldots,M_r(f)$ of commutative monoids 
%of $M_X$ such that $\bigoplus_{i=1}^nM_i(f)=M_X$. 
%\end{prop}
%\begin{proof}
%First we construct $M_i(f)$ locally. 
%We assume that $f$ is the natural morphism 
%$\Del_S({\bf a},d+r)\lo S$. 
%Let $M_i(f)$ be the associated log structure of the morphism 
%${\mab N}^{\oplus (a_i-a_{i-1})}\owns e_j\lom 
%z_{a_{i-1}+j}\in {\cal O}_{\Del_S({\bf a},d+r)}$. 
%It is easy to check $M_i(f)$ is independent of 
%the local isomorphism $X\os{\sim}{\lo} \Del_S({\bf a},d+r)$ over $S$ 
%and the coordinates $z_i$'s. 
%\end{proof}

\begin{coro}\label{coro:lps}
Let $f \col X \lo S$ be an SNCL analytic space 
with a decomposition 
$\Del:=\{\os{\circ}{X}_{\lam}\}_{\lam \in \Lam}$ 
of $\os{\circ}{X}/\os{\circ}{S}$ by its smooth components. 
Then, locally on $X$, $X$ is a finitely many product of SNCL analytic spaces 
in the usual sence with some decomposition of their smooth components. 
Consequently $f$ is log smooth. 
\end{coro}
\begin{proof} 
Obvious. 
\end{proof} 

%\begin{defi} 
%Let $D=\{D_{\mu}\}_{\mu}$ be a closed analytic space of $\os{\circ}{X}$. 
%\end{defi} 

The following is an obvious imitation of \cite[(2.1.7)]{nh2}. 

\begin{defi}\label{defsncd}
Let $f \col X \lo S$ be an SNCL analytic space 
with a decomposition 
$\Del:=\{\os{\circ}{X}_{\lam}\}_{\lam \in \Lam}$ 
of $\os{\circ}{X}/\os{\circ}{S}$ by its smooth components. 
We call an effective Cartier divisor $D$ on $X/S$ 
is a {\it relative simple normal crossing divisor 
$(=:$relative SNCD$)$} 
\index{relative simple normal crossing divisor(=:relative SNCD)}
on $X/S$ if there exists a set $\Gam:=\{D_{\mu}\}_{\mu}$ of 
non-zero effective Cartier divisors on $X/S$ of locally finite intersection 
such that 
\begin{equation*}  
D = \sum_{\mu}D_{\mu}
\quad \text{in} \quad {\rm Div}(X/S)_{\geq 0}, 
\tag{5.4.1}
\label{eqn:dcsncd}
\end{equation*} 
such that $D_{\mu}\vert_{Z_{\lam}}$ is a smooth divisor on $Z_{\lam}$ for any $\lam$ 
and, for any point $x$ of $D$, there exist 
an open neighborhood $V\simeq \Del_S({\bf a},d+r)$ of $x$ in $X$ and 
$D\vert_V=\bigcup_{c=a_r+1}^b\{z_c=0\}$ for some $a_r+1\leq b\leq d+r$. 
%\begin{equation*}
%\begin{CD}
%D\vert_V @>{\subset}>> V\\ 
%@V{}VV  @VV{g}V \\
%\ul{\rm Spec}_{S_0}({\cal O}_{S_0}
%[y_1, \ldots, y_d]/(y_1\cdots y_s)) 
%@>{}>> \ul{\rm Spec}_{S_0}({\cal O}_{S_0}[y_1, \ldots, y_d]) 
%\end{CD}
%\tag{5.4.2}
%\label{cd:dvs}
%\end{equation*}
%(for some positive integers $s$ and $d$ 
%such that $s\leq d$), where the morphism $g$ is etale. 
\end{defi}
%Note that we do not require a relation a priori 
%between $\{D_{\lam}\vert_V\}_{\lam \in \Lam_V}$ 
%and the family $\{y_i=0\}_{i=1}^s$ of closed subschemes 
%in $V$ in the diagram  (\ref{cd:dvs}). 
%However, by (\ref{prop:mainsncd}) below, 
%we obtain $\{D_{\lam}\vert_V\}_{\lam \in \Lam_V}=
%\{\{y_i=0\}\}_{i=1}^s$ in the diagram (\ref{cd:dvs}) 
%if $V$ is small. 
%We call a smooth divisor on $X/S$ contained in $D$ 
%a {\it smooth component}\index{smooth component} of $D$. 
We call $\Gam=\{D_{\mu}\}$ a {\it decomposition of} $D$
{\it by smooth components of $D$ over} $S$.  
Note that $D_{\mu}$ itself is not necessarily smooth over $\os{\circ}{S}$; 
$D_{\mu}$ is a union of smooth analytic space over $\os{\circ}{S}$. 
\par 
Let ${\rm Div}_D(X/S)_{\geq 0}$ be a submonoid of 
${\rm Div}(X/S)_{\geq 0}$ consisting 
of effective Cartier divisors $E$'s on $X/S$
such that there exists an open covering 
$X = \bigcup_{i \in I}V_i$ (depending on $E$) 
of $X$ such that 
$E \vert_{V_i}$ is contained 
in the submonoid of ${\rm Div}(V_i/S)_{\geq 0}$ 
generated by $D_{\lam}\vert_{V_i}$ $(\lam \in \Lam)$. 
As in \cite[(A.0.1)]{nh2},  we see that the definition of 
${\rm Div}_D(X/S)_{\geq 0}$ is 
independent of the choice of $\Del$. 
\par 
Let $M(D)'$ be a presheaf of monoids on $X$ defined as follows: 
for an open sub analytic space $V$ of $X$, 
\begin{align*}
\Gamma(V, M(D)'):=
\{(E,a)\in & {\rm Div}_{D\vert_V} (V/S)_{\geq 0} 
\times \Gamma(V, {\cal O}_X)\vert \tag{5.4.2}\label{ali:gm} \\
& a\text{ is a generator of } \Gamma(V, {\cal O}_X(-E))\} 
%\nonumber 
\end{align*} 
with a monoid structure defined by 
$(E,a) \cdot (E',a') := 
(E + E', aa')$. 
The  natural  morphism 
$M(D)' \lo {\cal O}_X$ defined by 
the second projection 
$(E,a) \mapsto a$
induces a morphism 
$M(D)' \lo ({\cal O}_X,*)$
of presheaves of monoids on $X$.
The log structure $M(D)$ is, 
by definition, the associated log 
structure to the sheafification of $M(D)'$. 
Because ${\rm Div}_{D\vert_V}(V/S)_{\geq 0}$ 
is independent of the choice of the decomposition 
of $D\vert_V$ by smooth components, 
$M(D)$ is independent of 
the choice of the decomposition of $D$ 
by smooth components of $D$. 

%\begin{rema}\label{rema:grhc}
%Let $\pi \col Y\lo Z$ be a proper morphism of complex analytic spaces. 
%Assume that $Z$ is reduced. Usually Grauert's theorem has been stated 
%for a $\pi$-flat coherent sheaf. However the proof for this theorem 
%is valid for a bounded complex whose any component 
%is a coherent ${\cal O}_Y$-module with 
%$\pi^{-1}({\cal O}_Y)$-linear boundary morphism. 
%\end{rema}

\section{Main results}\label{sec:m} 
In \cite{fc} Fujisawa has introduced the notion of a generalized semistable family over 
a unit polydisc. In this article we give a generalization of this notion and we call this 
a semistable family. In this section we give a main result in this article. 
\par 
Let the notations be as in the previous section. 
Let $f\col {\cal X}\lo \ol{S}$ be a log smooth morphism. 
We assume that $\os{\circ}{f} \col \os{\circ}{\cal X}\lo \os{\circ}{S}$ 
is smooth and that, locally on $\os{\circ}{\cal X}$, 
this is a product of semistable families of analytic spaces in the usual sense. 
That is, locally on $\os{\circ}{\cal X}$, there exist a system of local coordinates 
$t_1,\ldots, t_r$ of $\ol{S}$ over $\os{\circ}{S}$
and a system of local coordinates 
$x_1, \ldots, x_{d+r}$ such that 
$x_1\cdots x_{a_1}=t_1, x_{a_1+1}\cdots x_{a_2}=t_2, 
\ldots, x_{a_{r-1}+1}\cdots x_{a_r}=t_r$ $(0\leq i_1<i_2<\cdots <i_r\leq d+r)$ 
over $\os{\circ}{S}$ (\cite[(6.2), (6.5)]{fc}; see also \cite[(0.3)]{ak}).  
We also assume that the morphism $f^{-1}(M_{\ol{S}})\lo M_{\cal X}$ of log structures 
of $f$ is locally isomorphic to the associated log structure to the multi-diagonal morphism 
${\mab N}^r\os{\subset}{\lo} {\mab N}^{a_1}\oplus {\mab N}^{a_2-a_1}\oplus \cdots \oplus 
{\mab N}^{a_r-a_{r-1}}={\mab N}^{a_r}$, 
where $M_S$ and $M_{\cal X}$ are the associated log structures to 
the morphism ${\mab N}^r\owns e_i\lom t_i\in {\cal O}_{\ol{S}}$ $(1\leq i\leq r)$ and 
${\mab N}^{a_{i+1}-a_i}\owns e_j\lom x_{j+a_i+1}\in {\cal O}_{\cal X}$ $(1\leq i\leq r)$
in the local situation above. 
We call ${\cal X}/\ol{S}$ a {\it semistable family} over $\ol{S}$. 
Let ${\cal D}$ be a horizontal NCD(=normal crossing divisor) on ${\cal X}/\ol{S}$. 
That is, it is a relative NCD on $\os{\circ}{\cal X}/\os{\circ}{\ol{S}}$ 
which is locally defined by an equation 
$x_{a_r+1}\cdots x_b=0$ for some $a_r+1\leq b\leq d+r$ in the local situation above. 
Set $X:={\cal X}\times_{\ol{S}}S$ and $D:={\cal D}\times_{\ol{S}}S$. 
Let $f_S\col X\lo S$ be the structural morphism. 

\par 

%More generally, we prove that the analogous morphism to 
%(\ref{ali:his}) obtained by the vector bundle $V$ on ${\cal X}$ 
%is a quasi-isomorphism. More precisely, we prove the following: 

\begin{lemm}\label{lemm:cn}
Let ${\cal E}$ be a coherent locally free ${\cal O}_{\cal X}$-module and let 
$\nabla \col {\cal E}\lo {\cal E}\otimes_{{\cal O}_{\cal X}}
\Om^1_{{\cal X}/{\mab C}}(\log D)$ be a locally nilpotent integrable connection 
on ${\cal X}$ 
and let ${\cal E}\otimes_{{\cal O}_{\cal X}}\Om^{\bul}_{{\cal X}/{\mab C}}(\log D)$
be the associated log de Rham complex to $\nabla$. 
Let $\iota \col X\os{\sus}{\lo} {\cal X}$ be the natural exact closed immersion. 
Then the following natural morphism 
\begin{align*}
%\otimes_{\mab C}
\iota^{-1}({\cal E}\otimes_{{\cal O}_{\cal X}}\Om^{\bul}_{{\cal X}/\os{\circ}{S}}(\log {\cal D}))
\lo 
%{\mab C}[\log \tau]
%\otimes_{\mab C}
{\cal E}\otimes_{{\cal O}_{\cal X}}\Om^{\bul}_{X/\os{\circ}{S}}(\log D)
\tag{6.1.1}\label{ali:hixs}
\end{align*} 
is a quasi-isomorphism of complexes of $f^{-1}({\cal O}_S)$-modules.  
This quasi-isomorphism is contravariantly functorial. 
\end{lemm}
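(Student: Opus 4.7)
The assertion is a logarithmic-Poincar\'e-lemma-type statement: germs of log de Rham forms on ${\cal X}$ near $X$ should already compute the log de Rham complex of $X$. The plan is to reduce to a local semistable model and argue via a filtration by powers of the defining ideal of $X$, whose associated graded pieces turn out to be acyclic outside the top level by an iterative application of Lemma~\ref{lemm:qseux}.

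Since $\iota$ is an exact closed immersion and log differentials are compatible with exact base change, one has $\iota^*({\cal E}\otimes\Om^{\bul}_{{\cal X}/\os{\circ}{S}}(\log{\cal D}))={\cal E}\otimes_{{\cal O}_{\cal X}}\Om^{\bul}_{X/\os{\circ}{S}}(\log D)$, and the morphism (\ref{ali:hixs}) factors as a componentwise surjection $\iota^{-1}(\cdots)\twoheadrightarrow\iota^*(\cdots)$. It suffices to show the kernel $F^1K^{\bul}:=I\cdot K^{\bul}$ of this surjection is acyclic, where $K^{\bul}:=\iota^{-1}({\cal E}\otimes\Om^{\bul}_{{\cal X}/\os{\circ}{S}}(\log{\cal D}))$ and $I=(t_1,\ldots,t_r)\cdot\iota^{-1}{\cal O}_{\cal X}$ is the ideal of $X$. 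Setting $F^kK^{\bul}:=I^kK^{\bul}$, the identity $dt_i=t_i\,d\log t_i$ in the log de Rham complex gives $d(t^{\underline{k}}\omega)=t^{\underline{k}}(\sum_i k_i\,d\log t_i\wedge\omega+d\omega)$, and similarly for the connection, so each $F^k$ is a subcomplex of $K^{\bul}$. By flatness of ${\cal X}/\ol{S}$ (the local semistable structure is flat), $t_1,\ldots,t_r$ is a regular sequence on ${\cal O}_{\cal X}$, so $I^k/I^{k+1}$ is a free ${\cal O}_X$-module with basis $\{\overline{t^{\underline{k}}}\}_{|\underline{k}|=k}$, and
\begin{equation*}
F^kK^{\bul}/F^{k+1}K^{\bul}\cong\bigoplus_{|\underline{k}|=k}\Bigl({\cal E}\otimes_{{\cal O}_{\cal X}}\Om^{\bul}_{X/\os{\circ}{S}}(\log D),\;d+\textstyle\sum_i k_i\,d\log t_i\wedge\Bigr).
\end{equation*}

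The heart of the argument is the acyclicity of each summand with $\underline{k}\neq 0$. Let $i_0$ be the smallest index with $k_{i_0}\neq 0$. Applying Lemma~\ref{lemm:qseux} with $Y=X$ yields, for each $i$, the short exact sequence
\begin{equation*}
0\to{\cal E}\otimes_{{\cal O}_{\cal X}}\Om^{\bul}_{X/S_i}(\log D)[-1]\xrightarrow{d\log t_i\wedge}{\cal E}\otimes_{{\cal O}_{\cal X}}\Om^{\bul}_{X/S_{i-1}}(\log D)\to{\cal E}\otimes_{{\cal O}_{\cal X}}\Om^{\bul}_{X/S_i}(\log D)\to 0.
\end{equation*}
Twisting this by $\sum_j k_j\,d\log t_j\wedge$ and using that $d\log t_j=0$ in $\Om^1_{X/S_i}$ for $j\leq i$, one identifies both sub- and quotient-complex with the same twisted complex over $S_i$, and a direct calculation shows the induced connecting morphism in cohomology is $k_i+N_i$, where $N_i$ is the Gauss--Manin-type monodromy operator arising from the splitting $d_{X/S_{i-1}}\omega=d_{X/S_i}\omega+d\log t_i\wedge N_i(\omega)$. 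The locally nilpotent hypothesis on $({\cal E},\nabla)$ forces $N_i$ to act nilpotently on cohomology, so $k_i+N_i$ is invertible (via a finite geometric series) whenever $k_i\neq 0$; hence the twisted complex over $S_{i_0-1}$ is acyclic. For $j<i_0$ one has $k_j=0$, and the long exact sequences derived from the short exact sequences above propagate the vanishing inductively down to $S_0=\os{\circ}{S}$.

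To deduce the acyclicity of $F^1K^{\bul}$ itself from that of its associated graded, I work stalkwise. At $x\in X$, ${\cal O}_{{\cal X},x}$ is a Noetherian analytic local ring, and Krull's intersection theorem yields $\bigcap_k I^k_x(K^{\bul})_x=0$, so the $I$-adic filtration is separated. An explicit Poincar\'e-type homotopy integrating in the $t_i$-directions (generalizing the $r=1$ formula $h(g\,d\log t)=\sum_{n\geq 1}(b_n/n)t^n$ for $g=\sum b_n t^n$, which preserves analyticity since division by $n$ does not worsen convergence) produces genuine analytic primitives within $F^1$ for every closed form, giving the acyclicity. Contravariant functoriality is automatic from the naturality of the construction. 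The main technical subtlety lies precisely in this convergence step: while the formal Poincar\'e lemma on the $I$-adic completion is essentially automatic from the spectral sequence, constructing honest analytic (rather than merely formal) primitives requires the explicit estimates just described.
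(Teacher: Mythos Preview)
Your filtration-by-$I^k$ approach is a reasonable strategy but has two gaps as written. First, you assert that the connecting operator $N_i$ acts nilpotently on the cohomology sheaves of the twisted complexes, citing the locally nilpotent hypothesis on $(\mathcal{E},\nabla)$. But that hypothesis constrains only the residues of $\nabla$ on the bundle $\mathcal{E}$; your $N_i$ is the boundary map of a short exact sequence of \emph{complexes} and therefore also carries the geometric monodromy coming from the semistable log structure of $X$ itself (how $d\log t_i$ decomposes into the $d\log z_j$). Its nilpotence on local cohomology sheaves is true but requires Steenbrink-type input and is not a formal consequence of the hypothesis on $\nabla$. Second, and more seriously, you do not actually construct the analytic homotopy you invoke. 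The one-variable formula $h(\sum b_nt^n\,d\log t)=\sum(b_n/n)t^n$ does not address integration in several $t_i$, interaction with the fibre coordinates $z_j$, or a nontrivial $\nabla$; without it, acyclicity of each $\mathrm{gr}^k_F$ yields only acyclicity of the $I$-adic completion of $F^1K^{\bullet}$, not of $F^1K^{\bullet}$ itself.

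The paper avoids both issues by a different route. It first reduces, via the five lemma and the log Riemann--Hilbert statement \cite[(4.6)(2)]{kn}, to $(\mathcal{E},\nabla)=(\mathcal{O}_{\mathcal{X}},d)$, eliminating the first problem entirely. Then, in the local model $\mathcal{X}=\Delta^{m+1}$ with $t_l=\prod_j z_j$, the stalk of $t_1\cdots t_r\,\Omega^{\bullet}_{\mathcal{X}/\mathbb{C}}(\log\mathcal{D})$ is identified with the Koszul complex on $\mathbb{C}\{z_1,\ldots,z_m\}$ for the commuting operators $z_j\partial_{z_j}+1$ (for $1\le j\le a_r$), $z_j\partial_{z_j}$ (for $a_r<j\le b$), and $\partial_{z_j}$ (for $j>b$). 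Since each $z_j\partial_{z_j}+1$ is bijective on convergent power series (explicit inverse $\sum a_nz^n\mapsto\sum\frac{a_n}{n+1}z^n$), Steenbrink's Koszul lemma \cite[(1.12)]{sti} gives acyclicity directly, with analytic convergence built in. This bijectivity is precisely the ``divide by $n{+}1$'' your sketched homotopy gestures at, but organized so that no infinite-filtration convergence argument is needed.
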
 
\begin{proof} 
(cf.~\cite[(2.10)]{sti}, \cite[(3.17) (2)]{ey})
Since the problem is local, we may assume that $\nabla$ is nilpotent by the five lemma.  
In this case, by \cite[(4.6) (2)]{kn}, there exists a local system $V$ of 
finite dimensional ${\mab C}$-vector spaces on $\os{\circ}{\cal X}$ 
such that ${\cal E}:={\cal O}_{\cal X}\otimes_{\mab C}V$ and 
$\nabla:=d\otimes {\rm id} \col {\cal E}\lo {\cal E}\otimes_{{\cal O}_{\cal X}}
\Om^1_{{\cal X}/{\mab C}}(\log D)$. 
Indeed, we have only to set $V:={\rm Ker}(\nabla)$. 
Let ${\cal E}\otimes_{{\cal O}_{\cal X}}\Om^{\bul}_{{\cal X}/{\mab C}}(\log D)$
be the associated log de Rham complex to $\nabla$. 
We may assume that $V={\mab C}$. 
%Because a generalized semistable family of 
%log analytic spaces over $\ol{S}$ is locally a product of 
%semistable families of log analytic spaces over a one-parameter family over 
%an analytic space, 
%we may assume that $r=1$ 
%(cf.~the following argument on the tensor product of complexes). 
Moreover we may assume that $\os{\circ}{S}=\{O\}$ 
since the family ${\cal X}/\ol{S}$ is locally 
obtained by the base change of a semistable family in the case $\os{\circ}{S}=\{O\}$. 
%We denote $t_1$ simply by $t$.
Since $\Om^{\bul}_{X/{\mab C}}(\log D)=
\Om^{\bul}_{{\cal X}/{\mab C}}(\log {\cal D})\otimes_{{\cal O}_{\cal X}}{\cal O}_X$, 
it suffices to prove that 
the natural morphism 
\begin{align*} 
\iota^{-1}(\Om^{\bul}_{{\cal X}/{\mab C}}(\log {\cal D}))
\lo \Om^{\bul}_{{\cal X}/{\mab C}}(\log {\cal D})
\otimes_{{\cal O}_{\cal X}}{\cal O}_X
\tag{6.1.2}\label{ali:hinxs}
\end{align*} 
is a quasi-isomorphism.  
\par 
Let $(z_1,\ldots,z_{m+1})$ $(\vert z_i\vert <1)$ 
be the local coordinate of a point of ${\cal X}$. 
We may assume that ${\cal X}=\Del^{m+1}$ and that 
the morphism $f\col {\cal X}\lo \Del$ is defined by the following equation 
$$f(z_1,\ldots,z_m)=
(z_1\cdots z_{a_1},z_{a_1+1}\cdots z_{a_2}, \ldots,z_{a_{r-1}+1}\cdots z_{a_r})
=(t_1,\ldots,t_r)\in \Del^{r}.$$  
We may also assume that ${\cal D}$ is a closed analytic space of 
$\os{\circ}{\cal X}$ defined by an ideal sheaf $(z_{a_r+1}\cdots z_b)$ 
$(a_r+1\leq b\leq m)$. 
\par 
To prove that the morphism (\ref{ali:hinxs}) is a quasi-isomorphism, 
it suffices to prove that 
the complex $\iota^{-1}(t_1\cdots t_r\Om^{\bul}_{{\cal X}/{\mab C}}(\log {\cal D}))$ is acyclic. 
(This is indeed a complex.)
Indeed, the stalk of $\iota^{-1}(t_1\cdots t_r\Om^{\bul}_{{\cal X}/{\mab C}}(\log {\cal D}))$ 
at $S$ is the Koszul complex on ${\mab C}\{z_0,\ldots,z_m\}$ with respect to 
the operators $z_i\dfrac{d}{dz_i}+1$ $(1\leq i\leq a_r)$, 
$z_i\dfrac{d}{dz_i}$ $(a_r+1\leq i\leq b)$ and $\dfrac{d}{dz_i}$ $(b+1\leq i\leq m)$.  
Since the operator $z_i\dfrac{d}{dz_i}+1$ $(1\leq i\leq a_r)$ is bijective, 
we see that 
$\iota^{-1}(t_1\cdots t_r\Om^{\bul}_{{\cal X}/{\mab C}}(\log {\cal D}))$ is acyclic by 
\cite[(1.12)]{sti}. 
\par  
We can complete the proof of (\ref{lemm:cn}). 
\end{proof}

\begin{coro}\label{coro:sqi}
The natural morphism 
\begin{align*} 
\iota^{-1}({\cal E}\otimes_{{\cal O}_{\cal X}}
\Om^{\bul}_{{\cal X}/\os{\circ}{S}}(\log {\cal D}))[U_S]
\lo 
{\cal E}\otimes_{{\cal O}_{\cal X}}
\Om^{\bul}_{X/\os{\circ}{S}}(\log {\cal D})[U_S]
\tag{6.2.1}\label{ali:munox}
\end{align*} 
is a quasi-isomorphism of complexes of $f^{-1}({\cal O}_S)$-modules. 
This quasi-isomorphism is contravariantly functorial. 
\end{coro}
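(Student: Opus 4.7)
The plan is to reduce the claim to Lemma~\ref{lemm:cn} via the natural filtration on the Hirsch extension by polynomial degree. Recall from \S\ref{sec:da} that ${\cal F}\otimes_{{\cal O}_Y}\Om^{\bul}_{Y/\os{\circ}{S}}[U_S]=\Gam_{{\cal O}_S}(L_S)\otimes_{{\cal O}_S}{\cal F}\otimes_{{\cal O}_Y}\Om^{\bul}_{Y/\os{\circ}{S}}$ with Hirsch differential (\ref{eqn:bdff}). First I would introduce the increasing exhaustive filtration
\[
F_n:=\bigoplus_{k=0}^{n}\Gam^{k}_{{\cal O}_S}(L_S)\otimes_{{\cal O}_S}(-)\qquad(n\geq 0)
\]
on both sides of (\ref{ali:munox}). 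The key point is that the Hirsch part of the differential (the summand involving $d\log m_j\wedge$) strictly \emph{decreases} the polynomial degree in $L_S$ by one, while the $\nabla$-part preserves it; hence the differential respects $F_\bullet$, and on each graded piece only the $\nabla$-part survives.

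Next I would identify the graded pieces. For each $n\geq 0$,
\[
\mathrm{gr}^{F}_n\bigl(\iota^{-1}({\cal E}\otimes\Om^{\bul}_{{\cal X}/\os{\circ}{S}}(\log{\cal D}))[U_S]\bigr)
\;=\;f_{S}^{-1}(\Gam^{n}_{{\cal O}_S}(L_S))\otimes_{f_S^{-1}({\cal O}_S)}\iota^{-1}({\cal E}\otimes\Om^{\bul}_{{\cal X}/\os{\circ}{S}}(\log{\cal D})),
\]
and analogously on the right hand side with $\iota^{-1}$ removed and $X$ replacing ${\cal X}$. Since $\Gam^{n}_{{\cal O}_S}(L_S)$ is a free ${\cal O}_S$-module of finite rank (locally the degree-$n$ part of a polynomial algebra in $r$ variables), tensoring with $f_S^{-1}(\Gam^n_{{\cal O}_S}(L_S))$ is exact and hence commutes with taking cohomology. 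Combined with (\ref{ali:hixs}), this shows that the morphism induced on $\mathrm{gr}^F_n$ by (\ref{ali:munox}) is a quasi-isomorphism for every $n$.

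Now I would propagate this up the filtration. From the short exact sequences $0\to F_{n-1}\to F_n\to \mathrm{gr}^F_n\to 0$ on both sides, compatibility of (\ref{ali:munox}) with $F_\bullet$, and the five lemma applied to the associated long exact sequences of cohomology sheaves, an induction on $n$ starting from $F_0$ (which is precisely the morphism of Lemma~\ref{lemm:cn}) shows that $F_n\to F'_n$ is a quasi-isomorphism for all $n$. Since $F_\bullet$ is exhaustive and cohomology of complexes of sheaves commutes with filtered colimits (the filtration is by a direct sum over polynomial degrees, so everything is at the level of ordinary direct sums), passing to the colimit yields that (\ref{ali:munox}) is a quasi-isomorphism.

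Finally, the contravariant functoriality is automatic: the contravariant functoriality of (\ref{ali:hixs}) from Lemma~\ref{lemm:cn} and of the Hirsch construction for $v\col S\to S'$ via (\ref{ali:kxvef}) are both compatible with the polynomial-degree filtration $F_\bullet$, so naturality of the colimit argument gives the functoriality on (\ref{ali:munox}); the transitivity relation is inherited from that of (\ref{ali:kxbvef}) and Lemma~\ref{lemm:cn}. I expect no serious obstacle: the only point requiring some care is verifying that the Hirsch differential indeed preserves $F_\bullet$ and kills only the $\nabla$-contribution on $\mathrm{gr}^F_n$, which is immediate from the explicit formula (\ref{eqn:bdff}).
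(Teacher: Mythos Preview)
Your proof is correct and is essentially the same approach as the paper's, just with more detail supplied. The paper's one-line proof simply cites \cite[(3.14)]{nhi} for the fact that the (PD-)Hirsch extension preserves quasi-isomorphisms and then applies Lemma~\ref{lemm:cn}; your polynomial-degree filtration argument is precisely how one proves that cited fact, so you have unpacked the black box rather than taken a different route.
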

\begin{proof}
In \cite[(3.14)]{nhi} we have proved that the (PD-)Hirsch extension preserves 
the quasi-isomorphism. Hence (\ref{coro:sqi}) follows from (\ref{lemm:cn}). 
\end{proof}

\begin{theo}\label{theo:tesc}
Let the notations be as in {\rm (\ref{lemm:cn})}. 
Then there exists a canonical section 
\begin{align*} 
\rho \col {\cal E}\otimes_{{\cal O}_{\cal X}}\Om^{\bul}_{X/S}(\log D)
\lo 
\iota^{-1}({\cal E}\otimes_{{\cal O}_{\cal X}}\Om^{\bul}_{{\cal X}/\ol{S}}(\log {\cal D}))
\tag{6.3.1}\label{ali:mnox}
\end{align*}
of complexes of $f^{-1}({\cal O}_S)$-modules
of the natural morphism 
\begin{align*}
\iota^{-1}({\cal E}\otimes_{{\cal O}_{\cal X}}\Om^{\bul}_{{\cal X}/\ol{S}}(\log {\cal D}))
\lo 
{\cal E}\otimes_{{\cal O}_{\cal X}}\Om^{\bul}_{X/S}(\log D)
\tag{6.3.2}\label{ali:mmoox}
\end{align*} 
in the derived category $D^+(\iota^{-1}f^{-1}({\cal O}_{\ol{S}}))$ of bounded 
below complexes of $\iota^{-1}f^{-1}({\cal O}_{\ol{S}})$-modules. 
The section {\rm (\ref{ali:mnox})} is contravariantly functorial with respect to 
a morphism 
$({\cal X},{\cal D})\lo ({\cal X}',{\cal D}')$ over $\ol{S}\lo \ol{S}{}'$, 
where $({\cal X}',{\cal D}')/\ol{S}{}'$ is an analogous log analytic space over 
${\mab C}$. 
\end{theo}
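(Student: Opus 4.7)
The plan is to realize $\rho$ as a zig-zag of quasi-isomorphisms in the derived category, obtained by interposing the two Hirsch extensions $[U_S]$ and $[[U_S]]$ between the two complexes in the statement and then inverting the quasi-isomorphisms from Sections~2--4. The guiding picture is that $\iota^{-1}({\cal E}\otimes_{{\cal O}_{\cal X}}\Om^{\bul}_{{\cal X}/\ol{S}}(\log {\cal D}))$ is computed, via (\ref{prop:ys}) applied to the log smooth morphism ${\cal X}\lo\ol{S}$ (log structure enhanced by ${\cal D}$), by the completed Hirsch extension $\iota^{-1}(({\cal E}\otimes_{{\cal O}_{\cal X}}\Om^{\bul}_{{\cal X}/\os{\circ}{S}}(\log {\cal D}))[[U_S]])$; at the non-completed level, Corollary~(\ref{coro:sqi}) identifies this with the corresponding Hirsch extension on $X$, which in turn computes ${\cal E}\otimes_{{\cal O}_{\cal X}}\Om^{\bul}_{X/S}(\log D)$ by the log analytic Poincar\'e lemma (\ref{theo:afp}).

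Concretely, I would assemble the commutative diagram
\begin{equation*}
\begin{CD}
\iota^{-1}({\cal E}\otimes_{{\cal O}_{\cal X}}\Om^{\bul}_{{\cal X}/\ol{S}}(\log {\cal D})) @. \\
@A{\alpha}A{\sim}A @. \\
\iota^{-1}(({\cal E}\otimes_{{\cal O}_{\cal X}}\Om^{\bul}_{{\cal X}/\os{\circ}{S}}(\log {\cal D}))[[U_S]]) @. \\
@A{\beta}A{\sim}A @. \\
\iota^{-1}(({\cal E}\otimes_{{\cal O}_{\cal X}}\Om^{\bul}_{{\cal X}/\os{\circ}{S}}(\log {\cal D}))[U_S]) @>{\gamma}>{\sim}> ({\cal E}\otimes_{{\cal O}_{\cal X}}\Om^{\bul}_{X/\os{\circ}{S}}(\log D))[U_S] \\
@. @V{\delta}V{\sim}V \\
@. {\cal E}\otimes_{{\cal O}_{\cal X}}\Om^{\bul}_{X/S}(\log D).
\end{CD}
\end{equation*}
Here $\alpha$ is the quasi-isomorphism of (\ref{prop:ys}) applied with $\ol{\cal F}={\cal E}$ and with the log structure of ${\cal X}$ enhanced by ${\cal D}$ (note $\Om^{\bul}_{{\cal X}/\ol{S}}(\log {\cal D})$ is the log de Rham complex for this enhanced log smooth morphism), $\beta$ is the inclusion~(\ref{ali:ufaui}) which is a quasi-isomorphism by (\ref{coro:qii}), $\gamma$ is the Hirsch-extended natural morphism of (\ref{coro:sqi}), and $\delta$ is the quasi-isomorphism (\ref{ali:uafacui}) of Theorem~(\ref{theo:afp}) applied to $X/S$. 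Each application needs the hypothesis that $\nabla$ is locally nilpotent with respect to $M_S$; this is preserved under the pullback from ${\cal X}$ to $X$, and the required local freeness of $\Om^i_{{\cal X}/\ol{S}}(\log {\cal D})$ and $\Om^i_{X/\os{\circ}{S}}(\log D)$ is immediate from the definition of a semistable family with horizontal NCD. Inverting $\alpha$, $\beta$, $\delta$ in $D^+(\iota^{-1}f^{-1}({\cal O}_{\ol{S}}))$ yields $\rho:=\alpha\circ\beta\circ\gamma^{-1}\circ\delta^{-1}$ (with an evident abuse of notation for the inversions).

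To verify $\rho$ is a section of (\ref{ali:mmoox}), I would show that the composite $(\ref{ali:mmoox})\circ\rho$ equals the identity in the derived category. The natural morphism (\ref{ali:mmoox}) fits into a second commutative square obtained by projecting the $\Gamma_{{\cal O}_S}(L_S)$-factor to its degree-zero part: the projections (\ref{ali:uafacui}) and the analogous projection from $\Om^{\bul}_{{\cal X}/\os{\circ}{S}}(\log {\cal D})[[U_S]]$ to $\Om^{\bul}_{{\cal X}/\ol{S}}(\log{\cal D})\otimes{\cal O}_X$ intertwine $\alpha,\beta,\gamma$ with~(\ref{ali:mmoox}). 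This reduces the identity-check to the purely formal statement that projection to degree zero is a left inverse of the canonical map in each Hirsch extension, which is built into the construction. Functoriality follows by assembling the functoriality statements~(\ref{prop:rks}) and~(\ref{prop:raks}) together with the obvious contravariance of $\gamma$ and of the quasi-isomorphism in (\ref{lemm:cn}).

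The main obstacle is a bookkeeping one rather than a conceptual one: (\ref{prop:ys}) as stated is for a smooth ${\cal Y}\lo\ol{S}$ without horizontal divisor, so I must verify that its proof, which only uses the local freeness of $\Om^i_{{\cal Y}/\os{\circ}{S}}$ and the short exact sequence~(\ref{ali:cagaxd}), goes through verbatim when $\os{\circ}{S}$-differentials are replaced by log differentials along ${\cal D}$; equivalently, I must check that the analogue of (\ref{lemm:qslux}) holds for the enhanced log structure. Once this local-freeness-plus-residue-sequence package is in place, every step above is forced by the cited quasi-isomorphisms, and the non-canonical choices that appeared in the Steenbrink and Hyodo--Kato constructions are absorbed into the intrinsic sheaf $L_S$.
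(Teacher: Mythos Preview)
Your proposal is correct and follows essentially the same route as the paper: the paper assembles the identical zig-zag diagram (its (6.3.3)), using (\ref{theo:afp}) for your $\delta$, (\ref{coro:sqi}) for your $\gamma$, the inclusion $[U_S]\hookrightarrow[[U_S]]$ for your $\beta$, and the natural projection of (\ref{prop:ys}) for your $\alpha$, then reads off the section and its functoriality from (\ref{prop:rks}), (\ref{prop:ys}) and (\ref{coro:sqi}). Two small remarks: first, you only need to invert $\gamma$ and $\delta$ (as your displayed formula for $\rho$ correctly shows), so invoking (\ref{coro:qii}) for $\beta$ is harmless but unnecessary---the paper uses $\beta$ merely as a morphism and notes the quasi-isomorphism of $\alpha$ only parenthetically; second, your closing concern about extending (\ref{prop:ys}) to the case with horizontal divisor is already handled by the generality of \S4, since (\ref{prop:ys}) is stated for an arbitrary log smooth ${\cal Y}\to\ol{S}$ and one simply takes ${\cal Y}$ to be ${\cal X}$ with its log structure enhanced by ${\cal D}$.
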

\begin{proof} 
By (\ref{theo:afp}), (\ref{prop:ys}) and (\ref{coro:sqi})
we obtain the following commutative diagram 
(cf.~\cite[(3.17)]{ey}): 
\begin{equation*} 
\begin{CD}
\iota^{-1}({\cal E}\otimes_{{\cal O}_{\cal X}}\Om^{\bul}_{{\cal X}/\os{\circ}{S}}(\log {\cal D}))[[U_S]]
@>>>\iota^{-1}
({\cal E}\otimes_{{\cal O}_{\cal X}}\Om^{\bul}_{{\cal X}/\ol{S}}(\log {\cal D}))\\
@A{\bigcup}AA @|\\
\iota^{-1}({\cal E}\otimes_{{\cal O}_{\cal X}}\Om^{\bul}_{{\cal X}/\os{\circ}{S}}
(\log {\cal D}))[U_S]
@.\iota^{-1}({\cal E}\otimes_{{\cal O}_{\cal X}}\Om^{\bul}_{{\cal X}/\ol{S}}(\log {\cal D}))\\
@V{\simeq}VV @|\\
{\cal E}\otimes_{{\cal O}_{\cal X}}\Om^{\bul}_{X/\os{\circ}{S}}(\log D)[U_S]
@.\iota^{-1}
({\cal E}\otimes_{{\cal O}_{\cal X}}\Om^{\bul}_{{\cal X}/\ol{S}}(\log {\cal D}))\\
@V{\simeq}VV @VVV\\
{\cal E}\otimes_{{\cal O}_{\cal X}}\Om^{\bul}_{X/S}(\log D)
@={\cal E}\otimes_{{\cal O}_{\cal X}}\Om^{\bul}_{X/S}(\log D). 
\end{CD}
\tag{6.3.3}\label{cd:eoxd}
\end{equation*} 
(By (\ref{prop:ys}) the horizontal morphism in (\ref{cd:eoxd}) 
is indeed an isomorphism.)
\par 
The contravariantly functoriality follows from (\ref{prop:rks}), 
(\ref{prop:ys}) and (\ref{coro:sqi}). 
\end{proof} 

\begin{rema}\label{rema:ptc}
(1) In \cite[(2.9)]{nf}  we have proved that 
the morphism 
\begin{align*}
{\cal E}\otimes_{{\cal O}_{\cal X}}\Om^{\bul}_{X/\os{\circ}{S}}(\log D)[U_S]
\lo 
{\cal E}\otimes_{{\cal O}_{\cal X}}\Om^{\bul}_{X/S}(\log D)
\end{align*} 
is an underlying isomorphism of filtered isomorphism. 
\par 
(2) Because the following natural morphism 
\begin{align*} 
\iota^{-1}({\cal E}\otimes_{{\cal O}_{\cal X}}\Om^{\bul \geq i}_{{\cal X}/\ol{S}}(\log {\cal D}))
\lo 
{\cal E}\otimes_{{\cal O}_{\cal X}}\Om^{\bul \geq i}_{X/S}(\log D)
\quad (i\in {\mab N})
\end{align*} 
is not a quasi-isomorphism in general,  
the morphism (\ref{ali:mnox}) cannot be an underlying morphism of 
a filtered morphism with respect to the Hodge filtration in general. 
\end{rema}

\begin{coro}\label{coro:tesc}
Let $\iota_S\col S\os{\sus}{\lo} \ol{S}$ be the natural inclusion. 
Set $f_S:=f\times_{\ol{S}}S$. 
Then there exists a canonical section 
\begin{align*} 
\rho \col \iota_{S*}Rf_{S*}({\cal E}\otimes_{{\cal O}_{\cal X}}\Om^{\bul}_{X/S}(\log D))
\lo 
Rf_*\iota_{*}\iota^{-1}
({\cal E}\otimes_{{\cal O}_{\cal X}}\Om^{\bul}_{{\cal X}/\ol{S}}(\log {\cal D}))
\tag{6.5.1}\label{ali:amnox}
\end{align*}
of complexes of $f^{-1}({\cal O}_S)$-modules 
of the natural morphism 
\begin{align*}
Rf_*\iota_{*}\iota^{-1}
({\cal E}\otimes_{{\cal O}_{\cal X}}\Om^{\bul}_{{\cal X}/\ol{S}}(\log {\cal D}))
\lo 
\iota_{S*}Rf_{S*}({\cal E}\otimes_{{\cal O}_{\cal X}}\Om^{\bul}_{X/S}(\log D)). 
\tag{6.5.2}\label{ali:ox}
\end{align*} 
The section {\rm (\ref{ali:amnox})} is contravariantly functorial with respect to 
a morphism 
$({\cal X},{\cal D})\lo ({\cal X}',{\cal D}')$ over $\ol{S}\lo \ol{S}{}'$, 
where $({\cal X}',{\cal D}')/\ol{S}{}'$ is an analogous log analytic space over 
${\mab C}$. 
\end{coro}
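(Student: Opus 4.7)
The plan is to deduce (\ref{coro:tesc}) as a direct pushforward of the section already constructed in (\ref{theo:tesc}). Concretely, I would start from the canonical section
\[
\rho \col {\cal E}\otimes_{{\cal O}_{\cal X}}\Om^{\bul}_{X/S}(\log D)
\lo
\iota^{-1}({\cal E}\otimes_{{\cal O}_{\cal X}}\Om^{\bul}_{{\cal X}/\ol{S}}(\log {\cal D}))
\]
in $D^+(\iota^{-1}f^{-1}({\cal O}_{\ol{S}}))$, and then push it down. Since $\iota$ is a closed immersion, $\iota_{\ast}$ is exact, so $R\iota_{\ast}=\iota_{\ast}$; applying $\iota_{\ast}$ to $\rho$ produces a morphism from $\iota_{\ast}({\cal E}\otimes_{{\cal O}_{\cal X}}\Om^{\bul}_{X/S}(\log D))$ to $\iota_{\ast}\iota^{-1}({\cal E}\otimes_{{\cal O}_{\cal X}}\Om^{\bul}_{{\cal X}/\ol{S}}(\log {\cal D}))$ in $D^+(f^{-1}({\cal O}_{\ol{S}}))$, and applying $Rf_{\ast}$ produces a morphism in $D^+({\cal O}_{\ol{S}})$.

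The next step is to identify the left-hand side with $\iota_{S\ast}Rf_{S\ast}({\cal E}\otimes_{{\cal O}_{\cal X}}\Om^{\bul}_{X/S}(\log D))$. This is the only structural point: from the commutative square
\[
\begin{CD}
X @>{\iota}>> {\cal X}\\
@V{f_S}VV @VV{f}V\\
S @>{\iota_S}>> \ol{S}
\end{CD}
\]
one has $f\circ \iota=\iota_S\circ f_S$, hence $Rf_{\ast}\circ R\iota_{\ast}=R\iota_{S\ast}\circ Rf_{S\ast}$ by the composition law for derived pushforward. Because $\iota$ and $\iota_S$ are closed immersions, $R\iota_{\ast}=\iota_{\ast}$ and $R\iota_{S\ast}=\iota_{S\ast}$, so
\[
Rf_{\ast}\iota_{\ast}\bigl({\cal E}\otimes_{{\cal O}_{\cal X}}\Om^{\bul}_{X/S}(\log D)\bigr)
=\iota_{S\ast}Rf_{S\ast}\bigl({\cal E}\otimes_{{\cal O}_{\cal X}}\Om^{\bul}_{X/S}(\log D)\bigr).
\]
Composing with the image of $\rho$ under $Rf_{\ast}\iota_{\ast}$ yields (\ref{ali:amnox}). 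The fact that it is a section of (\ref{ali:ox}) follows by applying $Rf_{\ast}\iota_{\ast}$ to the identity $(\text{morphism (\ref{ali:moomx}) analogue})\circ \rho=\mathrm{id}$ from (\ref{theo:tesc}), using that $\iota_{\ast}$ commutes with pullback of the natural morphism (\ref{ali:mmoox}) to the target of $\rho$.

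Finally, the contravariant functoriality is inherited: given a morphism $({\cal X},{\cal D})\lo ({\cal X}',{\cal D}')$ over $\ol{S}\lo \ol{S}{}'$, the section (\ref{ali:mnox}) is contravariantly functorial in $D^+(\iota^{-1}f^{-1}({\cal O}_{\ol{S}}))$ by (\ref{theo:tesc}), and applying $Rf_{\ast}\iota_{\ast}$ preserves commutative diagrams in the derived category; combined with the base-change compatibility of $\iota_{S\ast}Rf_{S\ast}$ with pullback of functions on $\ol{S}$, this gives the claimed contravariance. There is no serious obstacle here: the only thing to be careful about is the identification $Rf_{\ast}\iota_{\ast}=\iota_{S\ast}Rf_{S\ast}$, which holds on the nose because both $\iota$ and $\iota_S$ have exact direct images.
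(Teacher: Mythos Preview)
Your proposal is correct and is precisely the intended argument: the paper states (\ref{coro:tesc}) without proof, immediately after (\ref{theo:tesc}), because it follows by applying $Rf_*\iota_*$ to the section (\ref{ali:mnox}) and using $Rf_*\iota_* = \iota_{S*}Rf_{S*}$ from the cartesian square, exactly as you do. There is nothing to add.
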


%For $({\cal X},{\cal D})$, we can reprove \cite[(6.4)]{ikn} when 
%$\os{\circ}{S}$ is a point. 

\begin{theo}\label{theo:lff}
Assume that $\os{\circ}{S}$ is smooth and that 
$\os{\circ}{f}\col \os{\circ}{\cal X}\lo \os{\circ}{\ol{S}}$ is proper. 
Then the higher direct image 
$R^qf_*({\cal E}\otimes_{{\cal O}_{\cal X}}\Om^{\bul}_{{\cal X}/\ol{S}}(\log {\cal D}))$ 
is a locally free ${\cal O}_{\ol{S}}$-module and commutes with base change. 
\end{theo}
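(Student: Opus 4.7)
My plan is to prove local freeness of ${\cal H}^q:=R^qf_*({\cal E}\otimes_{{\cal O}_{\cal X}}\Om^{\bul}_{{\cal X}/\ol{S}}(\log {\cal D}))$ by checking freeness of its formal completion along $S$. First, since $\os{\circ}{f}$ is proper and $\Om^{\bul}_{{\cal X}/\ol{S}}(\log {\cal D})$ is a bounded complex of coherent locally free ${\cal O}_{\cal X}$-modules, Grauert's direct image theorem gives that ${\cal H}^q$ is coherent on $\ol{S}$. For $x\in S$, the $S$-adic completion of ${\cal O}_{\ol{S},x}$ is faithfully flat over ${\cal O}_{\ol{S},x}$ (the $S$-ideal being contained in the maximal ideal), so freeness of $({\cal H}^q_x)^{\wedge}$ implies freeness of ${\cal H}^q_x$.

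The core computation is to apply $Rf_{S*}=Rf_*\circ \iota_*$ to the quasi-isomorphism (\ref{ali:fouu}) of Proposition \ref{prop:ys}. Since $\Gam_{{\cal O}_S}(L_S)^{\wedge}$ is flat over ${\cal O}_S$ and is pulled back from $S$, the projection formula yields a canonical isomorphism
\begin{equation*}
\Gam_{{\cal O}_S}(L_S)^{\wedge}\otimes_{{\cal O}_S}R^qf_{S*}\iota^{-1}({\cal E}\otimes \Om^{\bul}_{{\cal X}/\os{\circ}{S}}(\log {\cal D})) \os{\sim}{\lo} R^qf_*\iota_*\iota^{-1}({\cal E}\otimes \Om^{\bul}_{{\cal X}/\ol{S}}(\log {\cal D})).
\end{equation*}
Via the canonical ring map (\ref{ali:osls}), $\Gam_{{\cal O}_S}(L_S)^{\wedge}$ is identified with the formal completion $({\cal O}_{\ol{S}})^{\wedge}_S$ of ${\cal O}_{\ol{S}}$ along $S$ (locally the power-series ring ${\cal O}_S[[t_1,\ldots,t_r]]$; over $\mab C$ the divided-power completion and the ordinary completion agree up to units). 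Combined with Grothendieck's formal function theorem in the analytic setting (valid by properness of $\os{\circ}{f}$), the right-hand side is identified with $({\cal H}^q)^{\wedge}_S$. Using further Corollary \ref{coro:sqi} and Theorem \ref{theo:afp} --- i.e.\ passing through the Hirsch extension $[U_S]$ --- the left-hand side is rewritten as $({\cal O}_{\ol{S}})^{\wedge}_S\otimes_{{\cal O}_S} R^qf_{S*}({\cal E}\otimes \Om^{\bul}_{X/S}(\log D))$.

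Local freeness of $R^qf_{S*}({\cal E}\otimes \Om^{\bul}_{X/S}(\log D))$ on $\os{\circ}{S}$ can be proved by induction on the virtual dimension $r$ of $S$ using the exact sequences of Lemma \ref{lemm:qseux}, reducing to Steenbrink's theorem \ref{theo:xds} (extended to the present setting with an integrable connection and horizontal NCD) at $r=1$; alternatively one can invoke Theorem \ref{theo:kikn} of Illusie-Kato-Nakayama. Granting this, $({\cal H}^q)^{\wedge}_S$ is a free $({\cal O}_{\ol{S}})^{\wedge}_S$-module, whence by faithful flatness of the $S$-adic completion and coherence of ${\cal H}^q$, the sheaf ${\cal H}^q$ is locally free on a neighborhood of $S$. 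For points of $\ol{S}\setminus S$, the argument is repeated after replacing $S$ by the subfamily of log points cut out by the coordinates vanishing at the given point; iterating covers all of $\ol{S}$. Base change compatibility then follows from local freeness combined with the contravariant functoriality of Proposition \ref{prop:ys} and the canonical section of Theorem \ref{theo:tesc}.

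The main obstacle I anticipate is upgrading the derived-category quasi-isomorphism of Proposition \ref{prop:ys} to a genuine module-theoretic isomorphism after $R^qf_{S*}$: in particular, rigorously invoking the analytic formal function theorem in the presence of a nontrivial log structure, and justifying the identification of $\Gam_{{\cal O}_S}(L_S)^{\wedge}$ with $({\cal O}_{\ol{S}})^{\wedge}_S$ compatibly with (\ref{ali:fouu}), are the technical cores that make the Nakayama-type descent of freeness from the completion go through.
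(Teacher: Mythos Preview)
Your proposal contains a genuine gap at the ``projection formula'' step, and this is where the argument breaks down rather than at the technical points you flag at the end.

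The completed Hirsch extension ${\cal E}\otimes\Om^{\bul}_{{\cal X}/\os{\circ}{S}}(\log {\cal D})[[U_S]]$ is \emph{not} the tensor product complex $\Gam_{{\cal O}_S}(L_S)^{\wedge}\otimes_{{\cal O}_S}\bigl({\cal E}\otimes\Om^{\bul}_{{\cal X}/\os{\circ}{S}}(\log {\cal D})\bigr)$ with differential ${\rm id}\otimes\nabla$: by the formula (\ref{eqn:baadff}) the boundary morphism has an extra piece $u_j^{[e_j]}\mapsto u_j^{[e_j-1]}d\log t_j\wedge$ which lowers the $u$-degree. Hence there is no projection formula that lets you pull $\Gam_{{\cal O}_S}(L_S)^{\wedge}$ outside $R^qf_{S*}$, and your displayed isomorphism does not follow from Proposition~\ref{prop:ys}. (Indeed, if that identity held, then already on $X$ over $S$ the analogous statement would give $\Gam^{\wedge}\otimes R^qf_{S*}(\Om^{\bul}_{X/\os{\circ}{S}}) \simeq R^qf_{S*}(\Om^{\bul}_{X/S})$, which is false: the left-hand side has infinite rank over ${\cal O}_S$.)

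There is a second conflation. The quasi-isomorphism (\ref{ali:fouu}) is induced by the \emph{augmentation} $\Gam_{{\cal O}_S}(L_S)^{\wedge}\to {\cal O}_S$ sending every $u_i$ to $0$ (cf.\ (\ref{ali:uaafui})), whereas the ring map (\ref{ali:osls}) that you use to identify $\Gam_{{\cal O}_S}(L_S)^{\wedge}$ with $({\cal O}_{\ol{S}})^{\wedge}_S$ sends $u_i\mapsto t_i$. These are two different maps, and (\ref{ali:fouu}) is not compatible with the latter. The map $u_i\mapsto t_i$ enters the paper only through the Yamada section $\sig_{({\cal X},{\cal D})/\ol{S}}$ in (\ref{ali:xxd}), which is a genuinely different morphism from (\ref{ali:fouu}) and is not a quasi-isomorphism.

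The paper's proof avoids formal completions entirely. It applies Grauert's criterion directly: to show the base-change map at $y\in S$ is surjective, it uses the canonical section (\ref{ali:mnox}) of Theorem~\ref{theo:tesc} (built from the \emph{uncompleted} Hirsch extension via (\ref{cd:eoxd})) to split the natural map $\iota^{-1}(\Om^{\bul}_{{\cal X}/\ol{S}})\to \Om^{\bul}_{X/S}$ in the derived category, then shows via an elementary ${\cal K}^{\bul}$-argument and proper base change that the stalks $R^qf_*(\cdots)_y$ and $R^qf_*\iota_*\iota^{-1}(\cdots)_y$ agree. No appeal to the formal function theorem, to Theorem~\ref{theo:kikn}, or to freeness of $R^qf_{S*}(\Om^{\bul}_{X/S})$ is needed; this is what makes the argument ``purely algebraic'' in the sense of Remark~6.7.
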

\begin{proof}
By Grauert's theorem (\cite[p.~62 Satz 3]{grh}), 
we have only to prove that, for any point $y\in \ol{S}$, 
the natural morphism 
\begin{align*} 
R^qf_*({\cal E}\otimes_{{\cal O}_{\cal X}}\Om^{\bul}_{{\cal X}/\ol{S}}(\log {\cal D}))
\otimes_{{\cal O}_{\ol{S}}}\kap(y)
\lo 
H^q(X_y,{\cal E}\otimes_{{\cal O}_{\cal X}}\Om^{\bul}_{{\cal X}_y/y}(\log {\cal D}_y))
\tag{6.6.1}\label{ali:eexs}
\end{align*} 
is surjective. 
%Because we have the Gauss-Manin connection on 
%$R^qf_*({\cal E}\otimes_{{\cal O}_{\cal X}}\Om^{\bul}_{{\cal X}/\ol{S}}(\log {\cal D}))$ and 
%because $\os{\circ}{\ol{S}}$ is smooth, 
%we may assume that $M_y\not={\cal O}_y^*$ by \cite[(2.17)]{de}. 
%Furthermore we may assume that $y\in S$ because, if $y\notin S$, then 
%we have only to consider the smaller $r$. 
%In the following we assume that $y\in S$. 
We may assume that $\os{\circ}{y}$ is the origin $\os{\circ}{S}$. 
Indeed, if $\os{\circ}{y}$ is not the origin, 
we have only to consider the semistable family around $y$. 
%In this case, the morphism (\ref{ali:eexs}) factors through the natural morphism 
%\begin{align*} 
%R^qf_*({\cal E}\otimes_{{\cal O}_{\cal X}}\Om^{\bul}_{{\cal X}/\ol{S}}(\log {\cal D}))
%\otimes_{{\cal O}_{\ol{S}}}\kap(y)
%\lo 
%R^qf_{S*}({\cal E}\otimes_{{\cal O}_{\cal X}}\Om^{\bul}_{X/S}(\log D))
%\otimes_{{\cal O}_{S}}\kap(y). 
%\tag{6.6.2}\label{ali:eeoxs}
%\end{align*}  
%We claim that this morphism is surjective at $y\in S$. 
By (\ref{ali:mnox}) we see that the following natural morphism 
\begin{align*} 
R^qf_*
(\iota_*\iota^{-1}({\cal E}\otimes_{{\cal O}_{\cal X}}\Om^{\bul}_{{\cal X}/\ol{S}}(\log {\cal D})))
&\lo 
R^qf_*(\iota_*({\cal E}\otimes_{{\cal O}_{\cal X}}\Om^{\bul}_{X/S}(\log D)))\tag{6.6.2}\label{ali:mnmax}\\
=\iota_{S*}R^qf_{S*}({\cal E}\otimes_{{\cal O}_{\cal X}}\Om^{\bul}_{X/S}(\log D))
\end{align*} 
is surjective. 
%Hence we have the following morphism 
%\begin{align*} 
%&\iota_{S*}Rf_{S*}({\cal E}\otimes_{{\cal O}_{\cal X}}\Om^{\bul}_{X/S}(\log D))=
%Rf_*\iota_*({\cal E}\otimes_{{\cal O}_{\cal X}}\Om^{\bul}_{X/S}(\log D))
%\tag{6.5.5}\label{ali:manobx}\\
%&\lo 
%Rf_*\iota_*(\iota^{-1}({\cal E}\otimes_{{\cal O}_{\cal X}}\Om^{\bul}_{{\cal X}/\ol{S}}
%(\log {\cal D}))). 
%\end{align*} 
Set 
$${\cal K}^{\bul}:={\rm Ker}({\cal E}\otimes_{{\cal O}_{\cal X}}\Om^{\bul}_{{\cal X}/\ol{S}}(\log {\cal D})\lo 
\iota_*\iota^{-1}({\cal E}\otimes_{{\cal O}_{\cal X}}\Om^{\bul}_{{\cal X}/\ol{S}}(\log {\cal D})))
$$
(${\cal K}^{\bul}$ is equal to 
$j_{!}j^*({\cal E}\otimes_{{\cal O}_{\cal X}}\Om^{\bul}_{{\cal X}/\ol{S}}(\log {\cal D}))$, 
where $j\col {\cal X}\setminus X\os{\sus}{\lo}{\cal X}$ is the natural open immersion.)
Then we have the following exact sequence 
\begin{align*}
0\lo {\cal K}^{\bul}\lo {\cal E}\otimes_{{\cal O}_{\cal X}}\Om^{\bul}_{{\cal X}/\ol{S}}(\log {\cal D})\lo 
\iota_*\iota^{-1}({\cal E}\otimes_{{\cal O}_{\cal X}}\Om^{\bul}_{{\cal X}/\ol{S}}(\log {\cal D})))\lo 0
\end{align*} 
of $f^{-1}({\cal O}_{\ol{S}})$-modules. 
Hence we have the following exact sequence 
\begin{align*} 
\cdots &\lo R^qf_*({\cal K}^{\bul})\lo 
R^qf_*({\cal E}\otimes_{{\cal O}_{\cal X}}\Om^{\bul}_{{\cal X}/\ol{S}}(\log {\cal D}))\\
&\lo 
R^qf_*\iota_*\iota^{-1}({\cal E}\otimes_{{\cal O}_{\cal X}}\Om^{\bul}_{{\cal X}/\ol{S}}(\log {\cal D}))\lo R^{q+1}f_*({\cal K}^{\bul})\lo \cdots
\end{align*}
of $f^{-1}({\cal O}_{\ol{S}})$-modules. 
By the proper base change theorem and by the following cartesian diagrams: 
\begin{equation*} 
\begin{CD}
X_s@=X@>>>{\cal X}\\
@VVV @VVV @VVV\\
s@= S@>{\subset}>> \ol{S}, 
\end{CD}
\end{equation*} 
$R^qf_*({\cal K}^{\bul})_y=R^qf_{S*}(\iota^{-1}({\cal K}^{\bul}))_y$. 
Since $\iota^{-1}({\cal K}^{\bul})={\cal K}^{\bul}\vert_X=0$, 
$R^qf_*({\cal K}^{\bul})_y=0$. 
Hence 
\begin{align*} 
R^qf_*({\cal E}\otimes_{{\cal O}_{\cal X}}\Om^{\bul}_{{\cal X}/\ol{S}}(\log {\cal D}))_y
=
R^qf_*\iota_*\iota^{-1}
({\cal E}\otimes_{{\cal O}_{\cal X}}\Om^{\bul}_{{\cal X}/\ol{S}}(\log {\cal D}))_y.
\tag{6.6.3}\label{ali:eosexs}
\end{align*} 
%Since we have a natural morphism 
%\begin{align*} 
%Rf_*\iota_*(\iota^{-1}
%({\cal E}\otimes_{{\cal O}_{\cal X}}\Om^{\bul}_{{\cal X}/\ol{S}}(\log {\cal D})))\lo 
%Rf_*({\cal E}\otimes_{{\cal O}_{\cal X}}\Om^{\bul}_{{\cal X}/\ol{S}}(\log {\cal D})), 
%\end{align*} 
%this is a local problem. Moreover 
%\begin{align*} 
%R^qf_*({\cal E}\otimes_{{\cal O}_{\cal X}}\Om^{\bul}_{{\cal X}/\ol{S}}(\log {\cal D}))
%=R^qf_{*}\iota_*(\iota^{-1}
%({\cal E}\otimes_{{\cal O}_{\cal X}}\Om^{\bul}_{{\cal X}/\ol{S}}(\log {\cal D}))) 
%\quad (q\in {\mab N}). 
%\end{align*} 
%We may assume that $M_S/{\cal O}_S\simeq {\mab N}^r$ and 
%that there exists the immersion $S\os{\sus}{\lo} \Del_S=\Del^r\times \os{\circ}{S}$.  
%Because the homotopy type of ${\cal X}\setminus {\cal D}$ is invariant even if 
%we shrink ${\cal X}\setminus {\cal D}$ with respect to the direction of $\Del^r$, 
%we have the following equality as in the proof of \cite[(2.5)]{sti}: 
%\begin{align*} 
%R^qf_*({\cal E}\otimes_{{\cal O}_{\cal X}}\Om^{\bul}_{{\cal X}/\ol{S}}(\log {\cal D}))
%&=
%\vil_{S\subset U\subset \ol{S}}
%R^qf_*j_U(
%{\cal E}\otimes_{{\cal O}_{\cal X}}\Om^{\bul}_{{\cal X}/\ol{S}}(\log {\cal D})\vert_{f^{-1}(U)})\\
%&=
%\vil_{X\subset V\subset {\cal X}}
%R^qf_{*}j_{V*}
%({\cal E}\otimes_{{\cal O}_{\cal X}}\Om^{\bul}_{{\cal X}/\ol{S}}(\log {\cal D})\vert_{V})\\
%&= 
%R^qf_{*}\iota_*
%\iota^{-1}
%({\cal E}\otimes_{{\cal O}_{\cal X}}\Om^{\bul}_{{\cal X}/\ol{S}}(\log {\cal D})). 
%\end{align*} 
%Here $j_U \col f^{-1}(U)\os{\sus}{\lo} {\cal X}$ (resp.~$j_V\col V\os{\sus}{\lo}{\cal X}$) 
%is the open immersion.  
Consequently the morphism 
\begin{align*}
R^qf_*({\cal E}\otimes_{{\cal O}_{\cal X}}\Om^{\bul}_{{\cal X}/\ol{S}}(\log {\cal D}))_y 
\lo 
\iota_{S*}Rf_{S*}({\cal E}\otimes_{{\cal O}_{\cal X}}\Om^{\bul}_{X/S}(\log D))_y
=Rf_{S*}({\cal E}\otimes_{{\cal O}_{\cal X}}\Om^{\bul}_{X/S}(\log D))_y
\end{align*} 
is surjective. 
%Now we have only to prove that the morphism 
%\begin{align*} 
%R^qf_{S*}({\cal E}\otimes_{{\cal O}_{\cal X}}\Om^{\bul}_{X/S}(\log D))
%\otimes_{{\cal O}_S}\kap(y) \lo 
%H^q(X_y,{\cal E}\otimes_{{\cal O}_{\cal X}}\Om^{\bul}_{X_y/y}(\log D_y))
%\tag{6.6.5}\label{ali:eoexs}
%\end{align*} 
%is surjective. 
%In (\ref{coro:lfs}) we have already proved this surjectivity.  
\par
We complete the proof of this theorem. 
\end{proof} 

\begin{rema}
(1) Though (\ref{theo:lff}) is a special case of (\ref{theo:kikn}), 
our proof of (\ref{theo:lff}) is purely algebraic. 
\par 
(2) 
Let $\pi \col Y\lo Z$ be a proper morphism of complex analytic spaces. 
Assume that $Z$ is reduced. Usually Grauert's theorem has been stated 
for a $\pi$-flat coherent sheaf. However the proof for this theorem 
is valid for a bounded complex whose any component 
is a coherent ${\cal O}_Y$-module with 
$\pi^{-1}({\cal O}_Y)$-linear boundary morphism. 
\end{rema}

%\begin{coro}\label{coro:amm}
%{\rm (\ref{theo:rpi})} holds.
%\end{coro}
%\begin{proof}
%\end{proof} 

We also consider the case where $\os{\circ}{S}$ is not necessarily a point. 

\begin{coro}\label{coro:nsii}
Assume that $\os{\circ}{S}$ is smooth. 
Then the higher direct image 
$R^qf_*({\cal E}\otimes_{{\cal O}_{\cal X}}\Om^{\bul}_{{\cal X}/\ol{S}}(\log {\cal D}))$ 
is a locally free ${\cal O}_{\ol{S}}$-module and commutes with base change. 
\end{coro}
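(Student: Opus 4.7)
The plan is to reduce (\ref{coro:nsii}) to (\ref{theo:lff}) by working locally on $\ol{S}$ and fiberwise over $\os{\circ}{S}$. Since the assertion is local on $\ol{S}$, I fix a point $y_0\in \ol{S}$ with image $s_0:=p(y_0)\in \os{\circ}{S}$. By smoothness of $\os{\circ}{S}$, one may shrink $\os{\circ}{S}$ to a small polydisc-like neighborhood of $s_0$, and by the local description of the semistable family at the beginning of Section \ref{sec:m}, one may assume that \'{e}tale-locally ${\cal X}/\ol{S}$ is given by the equations $x_1\cdots x_{a_1}=t_1, \ldots, x_{a_{r-1}+1}\cdots x_{a_r}=t_r$, with the horizontal divisor ${\cal D}$ cut out by $x_{a_r+1}\cdots x_b=0$. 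Crucially, these defining equations do not involve the coordinates of $\os{\circ}{S}$.

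For each $s\in \os{\circ}{S}$, consider the restricted family $f_s\col {\cal X}_s\lo \ol{S}_s$ obtained by base change to the fiber of $p$ over $s$. Then $f_s$ is a proper semistable family over $\ol{S}_s$, whose associated log base $S_s$ is a log point of virtual dimension $r$ (i.e., $\os{\circ}{S}_s$ is a point). Applying (\ref{theo:lff}) to this restricted family yields that $R^q(f_s)_*({\cal E}_s\otimes_{{\cal O}_{{\cal X}_s}}\Om^{\bul}_{{\cal X}_s/\ol{S}_s}(\log {\cal D}_s))$ is a locally free ${\cal O}_{\ol{S}_s}$-module and commutes with base change. In particular, the fiber dimension function $y\lom \dim_{\kap(y)}H^q(X_y,{\cal E}\otimes \Om^{\bul}_{{\cal X}_y/y}(\log {\cal D}_y))$ is constant as $y$ varies within a single fiber $\ol{S}_s$ of $p$.

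To promote local freeness fiber-by-fiber to local freeness on $\ol{S}$, I invoke the form of Grauert's theorem recalled in the remark following (\ref{theo:lff}) (applied to a bounded complex of coherent sheaves with $f^{-1}({\cal O}_{\ol{S}})$-linear boundaries): it suffices to check that the fiber dimension is locally constant on the whole of $\ol{S}$, not merely on each vertical fiber $\ol{S}_s$. Constancy in the horizontal direction (as $s$ varies in $\os{\circ}{S}$) follows from the observation above that the local charts for ${\cal X}/\ol{S}$ are pulled back from charts on the model family $\ol{\Del}_{\{s_0\}}({\bf a},d+r)\lo \ol{\Del}^r_{\{s_0\}}$ over the fiber $\ol{S}_{s_0}$. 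Combining this with the proper base change theorem (or equivalently the contravariant functoriality of the canonical section $\rho$ in (\ref{coro:tesc}) applied to the maps $\ol{S}_{s_0}\os{\sus}{\lo}\ol{S}$ and their translates), one concludes that the dimension of the fiber cohomology at $y$ depends only on the image of $y$ in a single model fiber, so is constant near $y_0$. Grauert's theorem then yields local freeness, and base-change-compatibility follows from the local freeness together with the analytic cohomology base change theorem.

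The main obstacle is the rigorous verification that the fiber cohomology is genuinely ``horizontally constant'' on $\os{\circ}{S}$; while the local semistable charts suggest this is automatic, one needs to assemble the local isomorphisms globally, and here the base-change compatibility of the canonical Hirsch-type section $\rho$ from (\ref{coro:tesc})—which is not a literal pullback of the family but an \'{e}tale-local one—plays the essential role in bridging the local charts to the global higher direct image.
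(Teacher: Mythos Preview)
Your approach differs substantially from the paper's and contains a genuine gap in the ``horizontal constancy'' step.

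The paper's proof is a one-line reduction: given any point $y\in\ol{S}$, one observes that in a neighborhood of $y$ the family ${\cal X}\to\ol{S}$ is again a semistable family in the sense of \S\ref{sec:m}, now over a smaller base in which $y$ lies on the new zero-section. Concretely, if some coordinates $t_i(y)\neq 0$, those directions are absorbed into an enlarged $\os{\circ}{S}{}'$, which remains smooth; the remaining $t_j$ with $t_j(y)=0$ furnish the log structure of the new $S'$, and $y\in S'$. Theorem~\ref{theo:lff} then applies verbatim to this re-centered family and gives the surjectivity of the base-change map at $y$. No slicing over $\os{\circ}{S}$ is required.

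Your argument instead fixes $s\in\os{\circ}{S}$, applies Theorem~\ref{theo:lff} fiberwise to get constancy of $\dim H^q$ along each $\ol{S}_s$, and then tries to establish constancy as $s$ varies. The gap is exactly here. You assert that ``the local charts for ${\cal X}/\ol{S}$ are pulled back from charts on the model family over $\ol{S}_{s_0}$,'' but this is only true \'etale-locally on ${\cal X}$; the cohomology $H^q({\cal X}_y,\ldots)$ is a global invariant of the fiber, and a local product decomposition does not manufacture an isomorphism ${\cal X}_y\simeq{\cal X}_{y'}$ for $y,y'$ lying over distinct points of $\os{\circ}{S}$. The subsequent appeal to the functoriality of $\rho$ from Corollary~\ref{coro:tesc} does not close this gap either: that section compares $\iota^{-1}(\ldots)$ with the special-fiber complex, both living over $S$, and its functoriality is with respect to morphisms of the whole diagram $({\cal X},{\cal D})/\ol{S}$, not with respect to translating the base point inside $\os{\circ}{S}$. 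You yourself flag this ``bridging'' as the main obstacle in your last paragraph, and it is not actually carried out. One could attempt a rescue by first proving local freeness of $R^qf_{S*}$ over $S$ (Corollary~\ref{coro:ls}), but in the paper that statement is deduced \emph{after} the present corollary, so invoking it here would be circular.
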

\begin{proof} 
Take any point $y\in \ol{S}$. Then we have only to consider the semistable family around 
$y$ and we may assume that $y$ is the origin of this neighborhood of $y$. 
\end{proof} 

\begin{coro}\label{coro:ls}
Assume that $\os{\circ}{S}$ is smooth. 
Then $R^qf_*({\cal E}\otimes_{{\cal O}_{\cal X}}\Om^{\bul}_{X/S}(\log {\cal D}))$ 
is a locally free ${\cal O}_{\ol{S}}$-module and commutes with base change. 
\end{coro}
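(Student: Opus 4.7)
The plan is to deduce this directly from (\ref{coro:nsii}) by applying the base change property to the strict exact closed immersion $\iota_S \col S \os{\sus}{\lo} \ol{S}$.

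First, I invoke (\ref{coro:nsii}), which asserts that $R^qf_*({\cal E}\otimes_{{\cal O}_{\cal X}}\Om^{\bul}_{{\cal X}/\ol{S}}(\log {\cal D}))$ is a locally free ${\cal O}_{\ol{S}}$-module and commutes with every base change $T\lo \ol{S}$. I then specialize this property to $\iota_S$. In the cartesian square
\begin{equation*}
\begin{CD}
X @>{\iota}>> {\cal X}\\
@V{f_S}VV @VV{f}V\\
S @>{\iota_S}>> \ol{S},
\end{CD}
\end{equation*}
the upper horizontal morphism $\iota$ is a strict exact closed immersion, so log differentials pull back in the expected way: $\iota^*\Om^i_{{\cal X}/\ol{S}}(\log {\cal D})=\Om^i_{X/S}(\log D)$ for every $i\in{\mab N}$, and consequently
\begin{equation*}
\iota^*({\cal E}\otimes_{{\cal O}_{\cal X}}\Om^{\bul}_{{\cal X}/\ol{S}}(\log {\cal D}))=({\cal E}\vert_X)\otimes_{{\cal O}_X}\Om^{\bul}_{X/S}(\log D).
\end{equation*}

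Applying the base change isomorphism of (\ref{coro:nsii}) to $\iota_S$ then yields a canonical isomorphism
\begin{equation*}
\iota_S^*R^qf_*({\cal E}\otimes_{{\cal O}_{\cal X}}\Om^{\bul}_{{\cal X}/\ol{S}}(\log {\cal D}))\os{\sim}{\lo}R^qf_{S*}({\cal E}\otimes_{{\cal O}_{\cal X}}\Om^{\bul}_{X/S}(\log D)).
\end{equation*}
The left-hand side is the pullback of a locally free ${\cal O}_{\ol{S}}$-module along the closed immersion $\iota_S$, hence locally free over ${\cal O}_S$; pushing it forward via $\iota_{S*}$ supplies the locally free structure claimed in the statement. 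The commutation with further base change is then formal: any base change over $S$ factors through $\iota_S$, so by transitivity of the base change isomorphism combined with the base change property already granted by (\ref{coro:nsii}) one obtains the desired compatibility.

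I do not anticipate a serious obstacle here. The genuine analytic content—Grauert's theorem applied through the canonical section (\ref{ali:amnox}) produced by (\ref{theo:tesc})—has already been carried out in (\ref{theo:lff}) and (\ref{coro:nsii}). The only modest verification is the compatibility of the log de Rham complex with pullback along strict exact closed immersions, a standard fact in log geometry already implicit in the proof of (\ref{lemm:cn}).
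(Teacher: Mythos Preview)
Your argument is correct and matches the intended approach: the paper states (\ref{coro:ls}) as an immediate corollary of (\ref{coro:nsii}) without proof, and the only reasonable reading is exactly the base change along $\iota_S\col S\os{\sus}{\lo}\ol{S}$ that you carry out. Your remark about the typographical issues (the statement should read $f_{S*}$, $D$, and ${\cal O}_S$ rather than $f_*$, ${\cal D}$, and ${\cal O}_{\ol{S}}$) is well taken; the subsequent use in (\ref{rema:nnta}) and (\ref{theo:nm}) confirms this interpretation.
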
 

\begin{rema}\label{rema:nnta} 
In (\ref{coro:nsii}) and (\ref{coro:ls}) 
it is not necessary to assume that $\os{\circ}{S}$ is smooth 
by \cite[(6.4)]{ikn}.  
\end{rema}

In the following we do not assume that $\os{\circ}{S}$ is smooth. 

Let 
\begin{align*} 
\sig_{({\cal X},{\cal D})/\ol{S}} \col 
{\cal E}\otimes_{{\cal O}_{\cal X}}\Om^{\bul}_{{\cal X}/\os{\circ}{S}}(\log {\cal D})[U_S]
\lo 
{\cal E}\otimes_{{\cal O}_{\cal X}}\Om^{\bul}_{{\cal X}/\ol{S}}(\log {\cal D})
\tag{6.10.1}\label{ali:xxd} 
\end{align*} 
be the induced morphism by the morphism 
\begin{align*} 
{\cal E}\otimes_{{\cal O}_{\cal X}}\Om^{\bul}_{{\cal X}/\os{\circ}{S}}(\log {\cal D})
\os{{\rm proj}.}{\lo}  
{\cal E}\otimes_{{\cal O}_{\cal X}}\Om^{\bul}_{{\cal X}/\ol{S}}(\log {\cal D})
\end{align*} 
and the morphism (\ref{ali:osls}).  
The morphism $\sig_{({\cal X},{\cal D})/\ol{S}}$ is 
a morphism of complexes since ``$d\log t_i=0=dt_i$'' in  
$\Om^1_{{\cal X}/\ol{S}}(\log {\cal D})$. 

\begin{defi} 
We call the following morphism 
\begin{align*} 
{\cal E}\otimes_{{\cal O}_{\cal X}}\Om^{\bul}_{X/S}(\log D)
&\os{\sim}{\longleftarrow} 
{\cal E}\otimes_{{\cal O}_{\cal X}}\Om^{\bul}_{X/\os{\circ}{S}}(\log D)[U_S]
\os{\sim}{\longleftarrow}  
\iota^{-1}({\cal E}\otimes_{{\cal O}_{\cal X}}\Om^{\bul}_{{\cal X}/\os{\circ}{S}}(\log {\cal D}))[U_S]\tag{6.11.1}\label{cd:eaoxd}\\
&
\os{\iota^{-1}(\sig_{({\cal X},{\cal D})/\ol{S}})}{\lo} 
\iota^{-1}({\cal E}\otimes_{{\cal O}_{\cal X}}\Om^{\bul}_{{\cal X}/\ol{S}}(\log {\cal D}))
\end{align*} 
the $\infty${\it -adic Yamada section}. 
We denote this section by $\iota^{-1}(\sig_{({\cal X},{\cal D})/\ol{S}})$ by abuse of notation. 
\end{defi} 
\parno
The $\infty$-adic Yamada section is indeed a section of the following natural morphism 
\begin{align*}
\iota^{-1}({\cal E}\otimes_{{\cal O}_{\cal X}}\Om^{\bul}_{{\cal X}/\ol{S}}(\log {\cal D}))
\lo 
{\cal E}\otimes_{{\cal O}_{\cal X}}\Om^{\bul}_{X/S}(\log D) 
\end{align*}
because the following diagram is commutative: 
\begin{equation*} 
\begin{CD}
\iota^{-1}({\cal E}\otimes_{{\cal O}_{\cal X}}\Om^{\bul}_{{\cal X}/\os{\circ}{S}}
(\log {\cal D}))[U_S]
@>{\iota^{-1}(\sig_{({\cal X},{\cal D})/\ol{S}})}>>\iota^{-1}
({\cal E}\otimes_{{\cal O}_{\cal X}}\Om^{\bul}_{{\cal X}/\ol{S}}(\log {\cal D}))\\
@V{\simeq}VV @VVV\\
{\cal E}\otimes_{{\cal O}_{\cal X}}\Om^{\bul}_{X/\os{\circ}{S}}(\log D)[U_S]
@.{\cal E}\otimes_{{\cal O}_{\cal X}}\Om^{\bul}_{X/S}(\log D).\\
@V{\simeq}VV @|\\
{\cal E}\otimes_{{\cal O}_{\cal X}}\Om^{\bul}_{X/S}(\log D)
@={\cal E}\otimes_{{\cal O}_{\cal X}}\Om^{\bul}_{X/S}(\log D). 
\end{CD}
\tag{6.11.2}\label{cd:eoaxd}
\end{equation*} 
This commutative diagram is the crux in this article.
The $\infty$-adic Yamada section induces the following morphism 
\begin{align*} 
&\iota_{S*}Rf_{S*}({\cal E}\otimes_{{\cal O}_{\cal X}}\Om^{\bul}_{X/S}(\log D))
\os{\sim}{\longleftarrow} 
\iota_{S*}Rf_{S*}({\cal E}\otimes_{{\cal O}_{\cal X}}\Om^{\bul}_{X/\os{\circ}{S}}(\log D)[U_S])\tag{6.11.3}\label{cd:eboxd}\\
&\os{\sim}{\longleftarrow}  
Rf_*(\iota_*\iota^{-1}
({\cal E}\otimes_{{\cal O}_{\cal X}}\Om^{\bul}_{{\cal X}/\os{\circ}{S}}(\log {\cal D}))[U_S])
\os{\iota^{-1}(\sig_{({\cal X},{\cal D})/\ol{S}})}{\lo} 
Rf_*(\iota_*\iota^{-1}
({\cal E}\otimes_{{\cal O}_{\cal X}}\Om^{\bul}_{{\cal X}/\ol{S}}(\log {\cal D}))). 
\end{align*} 
Let $q$ be a nonnegative integer. 
Obviously the morphism (\ref{cd:eboxd}) induces the following isomorphism of 
${\cal O}_S$-modues: 
\begin{align*} 
&\iota_{S*}R^qf_{S*}({\cal E}\otimes_{{\cal O}_{\cal X}}\Om^{\bul}_{X/S}(\log D))
\os{\sim}{\longleftarrow} 
R^qf_*(\iota_*\iota^{-1}
({\cal E}\otimes_{{\cal O}_{\cal X}}\Om^{\bul}_{{\cal X}/\ol{S}}(\log {\cal D}))). \tag{6.11.4}\label{cd:ekoxd}\\
\end{align*} 
Let $y$ be any exact point of $S$. 
Because $R^qf_{S*}({\cal E}\otimes_{{\cal O}_{\cal X}}\Om^{\bul}_{X/S}(\log D))$ is 
a coherent ${\cal O}_S$-module, there exists an open neighbourhood $V(y)$ of 
$y$ in $\ol{S}$ such that the restriction of the isomorphism (\ref{cd:ekoxd}) to $V(y)$ 
factors through the natural morphism  
\begin{align*} 
R^qf_*({\cal E}\otimes_{{\cal O}_{\cal X}}\Om^{\bul}_{{\cal X}/\ol{S}}
(\log {\cal D}))\vert_{V(y)}
\lo R^qf_*\iota_*\iota^{-1}
({\cal E}\otimes_{{\cal O}_{\cal X}}\Om^{\bul}_{{\cal X}/\ol{S}}(\log {\cal D}))\vert_{V(y)}
\tag{6.11.5}\label{ali:eosmexs}
\end{align*} 
of ${\cal O}_S$-modules by (\ref{ali:eosexs}). 
Consequently we have the following injective morphism 
\begin{align*} 
&\iota_{S*}R^qf_{S*}({\cal E}\otimes_{{\cal O}_{\cal X}}\Om^{\bul}_{X/S}(\log D))
\vert_{V(y)}
\os{\subset}{\lo} 
R^qf_*({\cal E}\otimes_{{\cal O}_{\cal X}}\Om^{\bul}_{{\cal X}/\ol{S}}
(\log {\cal D}))\vert_{V(y)}. 
\tag{6.11.6}\label{cd:ekomxd}
\end{align*} 
Assume that $\os{\circ}{S}$ is quasi-compact. 
If $\eps>0$ is small enough, then we have a natural immersion 
$\ol{S}(\eps)\os{\sus}{\lo} \ol{S}$ ((\ref{ali:ollss})). 
%\begin{align*} 
%R^qf_*({\cal E}\otimes_{{\cal O}_{\cal X}}\Om^{\bul}_{{\cal X}/\ol{S}}(\log {\cal D}))\vert_V
%=R^qf_*\iota_*\iota^{-1}
%({\cal E}\otimes_{{\cal O}_{\cal X}}\Om^{\bul}_{{\cal X}/\ol{S}}(\log {\cal D}))\vert_V.
%\tag{6.11.5}\label{ali:eoxs}
%\end{align*} 
Because $\os{\circ}{S}$ is quasi-compact, there exists an 
open neighbourhood $V$ of $S$ in $\ol{S}$ 
such that the morphism restricted to $V\cap V(y)$ 
extends to the following morphism 
\begin{align*} 
\iota_{V*}R^qf_{S*}({\cal E}\otimes_{{\cal O}_{\cal X}}\Om^{\bul}_{X/S}(\log D))
\lo 
R^qf_*({\cal E}\otimes_{{\cal O}_{\cal X}}\Om^{\bul}_{{\cal X}/\ol{S}}(\log {\cal D}))\vert_V
\tag{6.11.7}\label{ali:amnomx}
\end{align*}
of ${\cal O}_S$-modules. Here $\iota_V\col S\os{\sus}{\lo} V$ is the inclusion morphism. 
Let $p_V \col V\lo \os{\circ}{S}$ be the projection. 
Applying $p_{V*}$ to (\ref{ali:amnomx}), we have the following morphism 
\begin{align*} 
R^qf_{S*}({\cal E}\otimes_{{\cal O}_{\cal X}}\Om^{\bul}_{X/S}(\log D)) 
\lo 
p_{V*}(R^qf_*({\cal E}\otimes_{{\cal O}_{\cal X}}\Om^{\bul}_{{\cal X}/\ol{S}}(\log {\cal D}))\vert_V)
\tag{6.11.8}\label{ali:amoomx}
\end{align*}
(Because $\ol{S}(\eps)\lo S$ for $\eps \lo 0$, we may assume that 
$\ol{S}(\eps)\subset V$.)  
Hence we have the following morphism 
\begin{align*} 
p^{-1}_{V}R^qf_{S*}({\cal E}\otimes_{{\cal O}_{\cal X}}\Om^{\bul}_{X/S}(\log D))
\lo 
R^qf_*({\cal E}\otimes_{{\cal O}_{\cal X}}\Om^{\bul}_{{\cal X}/\ol{S}}(\log {\cal D}))_V. 
\end{align*} 
of $p_V^{-1}({\cal O}_S)$-modules. 
Consequently we have the following morphism 
\begin{align*}  
\sig_V:=\sig(({\cal X},{\cal D})/\ol{S})_V \col 
p^*_{V}R^qf_{S*}({\cal E}\otimes_{{\cal O}_{\cal X}}\Om^{\bul}_{X/S}(\log D))
\lo 
R^qf_*({\cal E}\otimes_{{\cal O}_{\cal X}}\Om^{\bul}_{{\cal X}/\ol{S}}(\log {\cal D}))_V 
\tag{6.11.9}\label{cd:aazxd}
\end{align*} 
of ${\cal O}_V$-modules.

The following is the main result in this article:

\begin{theo}[\bf Invariance theorem]\label{theo:nm}
If $V$ is small enough $($containing $S)$, 
then the morphism $\sig_V$ is an isomorphism.  
%For any small $\eps>0$, 
%there exists a canonical contravariantly functorial morphism 
%\begin{align*} 
%\sig(\eps) \col  
%p(\eps)^*(R^qf_{S*}({\cal E}\otimes_{{\cal O}_{\cal X}}\Om^{\bul}_{X/S}(\log D)))
%\os{\sim}{\lo}  
%R^qf(\eps)_*({\cal E}\otimes_{{\cal O}_{\cal X}}\Om^{\bul}_{{\cal X}(\eps)/\ol{S}(\eps)}
%(\log {\cal D}(\eps))).
%\tag{6.11.1}\label{ali:oexas}
%\end{align*} 
%The isomorphisms $\sig(\eps)$'s for small $\eps$'s are compatible in an obvious sense.  
\end{theo}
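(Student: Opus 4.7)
The plan is to prove that $\sig_V$ is an isomorphism by observing that its source and target are coherent locally free ${\cal O}_V$-modules of the same rank, and that the restriction of $\sig_V$ along the exact closed immersion $\iota_V\col S\hookrightarrow V$ is an isomorphism. Nakayama's lemma together with a rank comparison will then give the result in a neighbourhood of $S$, made uniform by quasi-compactness of $\os{\circ}{S}$.

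First I would verify the local freeness of both sides. The source $p^*_V R^qf_{S*}({\cal E}\otimes_{{\cal O}_{\cal X}}\Om^{\bul}_{X/S}(\log D))$ is the inverse image by $p_V\col V\lo \os{\circ}{S}$ of a locally free ${\cal O}_S$-module by (\ref{coro:ls}), and the target $R^qf_*({\cal E}\otimes_{{\cal O}_{\cal X}}\Om^{\bul}_{{\cal X}/\ol{S}}(\log {\cal D}))|_V$ is locally free on $V$ by (\ref{coro:nsii}). Both commute with base change, and for any exact point $y\in S$ their fibers at $y$ are canonically identified with $H^q(X_y,{\cal E}\otimes_{{\cal O}_{\cal X}}\Om^{\bul}_{X_y/y}(\log D_y))$. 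Consequently the two ${\cal O}_V$-modules are locally free of the same rank in a neighbourhood of $S$ (after possibly shrinking $V$).

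Next I would identify the restriction $\iota_V^*\sig_V$ and show it is an isomorphism. Tracing the construction in (\ref{cd:aazxd}) and the extension step (\ref{ali:amnomx}), $\iota_V^*\sig_V$ is obtained by composing the canonical isomorphism (\ref{cd:ekoxd}) (produced on the special fiber by the $\infty$-adic Yamada section) with the inverse of the natural morphism (\ref{cd:ekomxd}); the latter is an isomorphism on stalks at every point $y\in S$ by the stalkwise identification (\ref{ali:eosexs}), which in turn rests on the vanishing of $R^qf_*({\cal K}^{\bul})_y$. The key structural input is the commutative diagram (\ref{cd:eoaxd}), which encodes the fact that the Yamada section is a section of the natural augmentation from the Hirsch extension to the log de Rham complex of the special fiber.

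Finally, Nakayama's lemma applied stalkwise at each $y\in S$ upgrades the isomorphism $\iota_V^*\sig_V$ to surjectivity of $\sig_V$ in a neighbourhood of $S$; since both sides are locally free of the same rank there, this surjection is an isomorphism. By quasi-compactness of $\os{\circ}{S}$ we can then shrink $V$ to a single open neighbourhood of $S$ on which $\sig_V$ is globally an isomorphism. The main technical obstacle lies in the second step: one must carefully verify that the pullback of $\sig_V$ to $S$ really does coincide with the canonical identification coming from (\ref{cd:ekoxd}) and (\ref{ali:eosexs}), which requires tracing through the chain of quasi-isomorphisms built from the completed Hirsch extension (\ref{prop:ys}), the quasi-isomorphism (\ref{lemm:cn}), and the defining commutative diagram (\ref{cd:eoaxd}) of the $\infty$-adic Yamada section.
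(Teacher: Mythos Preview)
Your proposal is correct and follows essentially the same argument as the paper: establish local freeness and equality of ranks of source and target (the paper cites (\ref{rema:nnta}) and (\ref{theo:lff}) rather than (\ref{coro:ls}) and (\ref{coro:nsii}), to cover the case where $\os{\circ}{S}$ is not assumed smooth), check that the fiber of $\sig_V$ at each $s\in S$ is the identity on $R^qf_{s*}({\cal E}\otimes_{{\cal O}_{\cal X}}\Om^{\bul}_{{\cal X}_s/s}(\log {\cal D}_s))$, apply Nakayama's lemma to obtain surjectivity in a neighbourhood, deduce injectivity from equality of ranks of free modules, and globalize by quasi-compactness of $\os{\circ}{S}$. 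Your identification of the ``technical obstacle'' in the second step is accurate and matches what the paper encapsulates in (\ref{ali:oexsa}).
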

\begin{proof} 
By (\ref{rema:nnta}) and (\ref{theo:lff}) 
the source and the target of the morphism (\ref{cd:aazxd}) 
are locally free ${\cal O}_{\ol{S}}$-modules and commutes with base change. 
Hence the morphism (\ref{cd:aazxd})
induces the following isomorphism 
\begin{align*} 
{\mab C}\otimes_{{\mab C}}
R^qf_{s*}({\cal E}\otimes_{{\cal O}_{\cal X}}\Om^{\bul}_{{\cal X}_s/s}(\log {\cal D}_s))
=
R^qf_{s*}({\cal E}\otimes_{{\cal O}_{\cal X}}\Om^{\bul}_{{\cal X}_s/s}(\log {\cal D}_s))
\tag{6.12.1}\label{ali:oexsa}
\end{align*}
at any point $s\in S$, where $f_s\col {\cal X}_s\lo s$ is the structural morphism. 
%Take an open log analytic space $W$ of $\ol{S}$ such that $W\cap S\not=\emptyset$. 
Take an open neighborhood $W$ of $s$ in $\ol{S}$ such that 
$R^qf_*({\cal E}\otimes_{{\cal O}_{\cal X}}\Om^{\bul}_{{\cal X}/\ol{S}}(\log {\cal D}))\vert_W$ 
and 
$R^qf_{S*}({\cal E}\otimes_{{\cal O}_{\cal X}}\Om^{\bul}_{X/S}(\log D))\vert_{W\cap S}$ 
are free. Let $\{v_i\}_{i=1}^d$ and $\{w_{i}\}_{i=1}^d$ be bases of 
$R^qf_*({\cal E}\otimes_{{\cal O}_{\cal X}}\Om^{\bul}_{{\cal X}/\ol{S}}(\log {\cal D}))\vert_W$ 
and   
$R^qf_{S*}({\cal E}\otimes_{{\cal O}_{\cal X}}\Om^{\bul}_{X/S}(\log D))\vert_{W\cap S}$, 
respectively.  
Let  $\ol{\mathfrak m}_s$ be the maximal ideal of ${\cal O}_{\ol{S}}$ at $s$. 
%Set $\sig(\eps):=\sig_{\ol{S}(\eps)}$. 
Then 
\begin{align*} 
v_{j}~{\rm mod}~\ol{\mathfrak m}_s
=\sum_{i=1}^d(f_{ij}~{\rm mod}~\ol{\mathfrak m}_s)
(\sig\vert_{V\cap \ol{S}(\eps)}(p^{-1}(w_i))~{\rm mod}~\ol{\mathfrak m}_s)
\end{align*}
for some local sections $f_{ij}$'s of ${\cal O}_{\ol{S}}$. 
By Nakayama's lemma,  
$v_{j}=\sum_{i=1}^df_{ij}\sig(\eps)(p^{-1}(w_i))$ 
for some local sections $f_{ij}$'s of ${\cal O}_{\ol{S}}$.  
Hence there exist an open log analytic space $V_1$ and a set 
$\{f_{ij}\}_{i,j=1}^r$ of local sections of $\Gam(V_1,{\cal O}_{\ol{S}})$ 
such that $v_j=\sum_{i=1}^rf_{ij}\sig(p^{-1}(w_i))$ on $V_1$. 
This means that the morphism 
\begin{align*} 
\Gam(V_1,{\cal O}_{\ol{S}}\otimes_{p^{-1}({\cal O}_S)}
p^{-1}R^qf_*({\cal E}\otimes_{{\cal O}_{\cal X}}\Om^{\bul}_{X/S}(\log D)))
\lo 
\Gam(V_1,
R^qf_*({\cal E}\otimes_{{\cal O}_{\cal X}}\Om^{\bul}_{{\cal X}(\eps)/\ol{S}(\eps)}
(\log {\cal D}(\eps))))
\tag{6.12.2}\label{ali:omexs}
\end{align*} 
is surjective. 
\par 
Because the source and the target of the morphism (\ref{ali:omexs}) 
are free $\Gam(V_1,{\cal O}_{\ol{S}})$-modules of 
the same rank, the morphism (\ref{ali:omexs}) is injective by  
\cite[II \S3 Corollaire to Th\'{e}oreme 1]{bou} and 
[loc.~cit., II \S3 Corollaire to Proposition 6]. 
Because $\os{\circ}{S}$ is quasi-compact, 
we have only to consider a finitely many open covering $W$'s. 
%\par 
%(2): (2) follows from (1). 
%Let $w$ be an element  of $\Gam(V_1\cap S,
%p^{-1}R^qf_*({\cal E}\otimes_{{\cal O}_{\cal X}}\Om^{\bul}_{X/S}(\log D)))$ 
%such that $\sig(w)=0$. Then $\sig(w)~{\rm mod}~\ol{\mathfrak m}^n_s=0$ 
%for any $n\in{\mab Z}_{\geq 1}$. 
%Because $p^{-1}({\cal O}_S)\lo {\cal O}_{\ol{S}}$ is faithfully flat and 
%because 
%$R^qf_*({\cal E}\otimes_{{\cal O}_{\cal X}}\Om^{\bul}_{X/S}(\log D)$ 
%commutes with base change, 
%this vanishing means that $w~{\rm mod}~{\mathfrak m}^n_s=0$, where 
%${\mathfrak m}_s$ is the maximal ideal of ${\cal O}_{S,s}$. 
%Since $s$ is any point of $V_1\cap S$, 
%we see that $w=0$ as in the proof of (\ref{theo:qq}). 
%This means that the morphism (\ref{ali:omexs}) is injective. 
\end{proof}

%We have proved the following in the proof of the theorem above. 
%\begin{coro}\label{coro:cb}
%If $\eps>0$ is sufficiently small, then there exists the following canonical isomorphism
%\begin{align*} 
%{\cal O}_{\ol{S}}\otimes_{p^{-1}({\cal O}_S)}
%p^{-1}R^qf_*({\cal E}\otimes_{{\cal O}_{\cal X}}\Om^{\bul}_{X/S}(\log D))
%\os{\sim}{\lo} 
%R^qf_*({\cal E}\otimes_{{\cal O}_{\cal X}}\Om^{\bul}_{{\cal X}(\eps)/\ol{S}(\eps)}
%(\log {\cal D}(\eps)))) \quad (q\in {\mab N}). 
%\end{align*} 
%s\end{coro}

\begin{exem}
Let $f\col {\cal E} \lo \Del$ be a log elliptic curve (Tate curve) 
over $\Del$ described in \cite[0.2.10]{ku} 
defined as follows. (Strictly speaking, the definition in [loc.~cit.] is not perfect.) 
\par  
Set $X=\{(t_1,t_2)\in {\mab C}^2~\vert~\vert t_1t_2\vert <1\}$ 
and let the equivalence relation $\sim$ be as follows: 
$(t_1,t_2)\sim (t'_1,t'_2) \us{\rm def}{\Longleftrightarrow} 
t_1t_2=t'_1t'_2$. Let $\pi \col X\lo X/\sim$ be the projection 
and let $g\col X/\sim \owns [(t_1,t_2)]\lom t_1t_2\in \Del$.  
We define ${\cal E}$ as the quotient of $X/\sim$ by 
the following equivalence relation 
$\approx$: 
for $q=t_1t_2=t_1't_2'\in \Del^*$,  
$[(t_1,t_2)]\approx [(t'_1,t'_2)]\us{\rm def}{\Longleftrightarrow} 
t_1'=q^nt_1$ and $t_2'=q^{-n}t_2$ for some $n\in {\mab Z}$; 
for $q=0$, $[(t_1,t_2)]\approx [(t'_1,t'_2)]
\us{\rm def}{\Longleftrightarrow}$ 
if $t_1=0$, take $t_2\not=0$. 
Then $(t'_1,t'_2)=(0,t_2)$ or $(t_2^{-1},0)$. 
If $t_2=0$, take $t_1\not=0$. Then $(t'_1,t'_2)=(t_1,0)$ or $(0,t_1^{-1})$. 
Let $f\col {\cal E}\lo \Del$ be the induced morphism by $g$. 
Let $O$ be the origin of $\Del$. 
Then $f^{-1}(q)={\mab C}^*/q^{\mab Z}$ and 
$f^{-1}(O)={\mab P}^1_{\mab C}({\mab C})/(0\sim {\inf})$. 
Let $E$ be the log special fiber of ${\cal E}$ at $O$.
\par 
%for ${\cal O}_{\Del}\owns \al \not=0$. 
%However we have not proved this.  
%a section 
%$$R^1f_*(\Om^{\bul}_{E_s/s})\os{\sus}{\lo} R^1f_*(\Om^{\bul}_{E/\Del})$$ 
%of the projection $R^1f_*(\Om^{\bul}_{E/\Del})\lo 
%R^1f_*(\Om^{\bul}_{E_s/s})$.
%Obviously 
%an isomorphism 
%$$R^1f_*(\Om^{\bul}_{E/\Del})={\cal O}_{\Del}\otimes_{\mab C}
%H^1_{\rm dR}(E_s/s).$$
Consider the map $h\col X\owns (t_1,t_2)\lom \vert t_1t_2\vert \in {\mab R}$.
Set $W_1(q):=(\vert q\vert^{\frac{3}{2}}-\eps, \vert q\vert +\eps)$ 
and $W_2(q):=(\vert q^2\vert-\eps,\vert q\vert^{\frac{3}{2}}+\eps)$ be 
intervals in ${\mab R}$ for a small $\eps>0$. 
Set $W_1:=\cup_{\vert q\vert <1}h^{-1}(W_1(q))$ and 
$W_2:=\cup_{\vert q\vert <1}h^{-1}(W_2(q))$. 
Let $U_i$ $(i=1,2)$ be the image of $W_i$ in $E$. 
Then we obtain an open covering $E=U_1\cup U_2$. 
If $\eps >0$ is small, then $U_1\cap U_2=V_1\coprod V_2$ for 
two nonempty open sets $V_1$ and $V_2$. 
%Consider the affine analytic spaces
%$$U_1:={\rm Spec}({\mab C}[t,\dfrac{z}{t},\dfrac{q}{z}]):=
%{\rm Spec}({\mab C}[t,x_1,y_1]/(x_1y_1-t))$$ 
%and 
%$$U_2={\rm Spec}({\mab C}[t,z,\dfrac{t}{z}])
%:={\rm Spec}({\mab C}[t,x_2,y_2]/(x_2y_2-t)).$$ 
%Consider the affine scheme $\{\vert z \vert=\vert t \vert\}$ in $U$ and $V$, respectively:
%$$V_1:={\rm Spec}({\mab C}[t,(\dfrac{z}{t})^{\pm 1}]):=
%{\rm Spec}({\mab C}[t,x_1^{\pm 1}]),$$  
%$$V'_1:={\rm Spec}({\mab C}[t,(\dfrac{t}{z})^{\pm 1}]):=
%{\rm Spec}({\mab C}[t,y_2^{\pm 1}]).$$  
%Patch 
%$V_1$ and  $V'_1$ by $z\lom z$. 
%Furthermore, 
%consider also the affine scheme $\{\vert z \vert=\vert q \vert\}\simeq 
%\{\vert z \vert=\vert 1 \vert\}$ in $U$ and $V$, respectively:
%$$V_2:={\rm Spec}({\mab C}[t,(\dfrac{t}{z})^{\pm 1}]):=
%{\rm Spec}({\mab C}[t,y_1^{\pm 1}]),$$  
%$$V'_2:={\rm Spec}({\mab C}[t,z^{\pm 1}]):=
%{\rm Spec}({\mab C}[t,x_2^{\pm 1}]),$$  
%Patch 
%$V_2$ and  $V'_2$ by $z\lom qz$. 
The notation $(0)$ stands for the specialization $q=0$. 
\par 
The cohomology $H^1_{\rm dR}(E/s)$ is calculated by 
the $H^1$ of the following double complex:  
\begin{equation*} 
\begin{CD}
\bigoplus_{j=1}^2\bigoplus_{i=0}^{\infty}\Gam(U_j(0),\Om^2_{U_j(0)/{\mab C}}u^{[i]})
@>{\partial}>>
\bigoplus_{j=1}^2\bigoplus_{i=0}^{\infty}\Gam(V_j(0),\Om^2_{V_j(0)/{\mab C}}u^{[i]})\\
@A{d}AA @AA{-d}A \\
\bigoplus_{j=1}^2\bigoplus_{i=0}^{\infty}\Gam(U_j(0),\Om^1_{U_j(0)/{\mab C}}u^{[i]})@>{\partial}>>
\bigoplus_{j=1}^2\bigoplus_{i=0}^{\infty}\Gam(V_j(0),\Om^1_{V_j(0)/{\mab C}}u^{[i]})\\
@A{d}AA @AA{-d}A \\
\bigoplus_{j=1}^2\bigoplus_{i=0}^{\infty}\Gam(U_j(0),{\cal O}_{U_j(0)}u^{[i]})
@>{\partial}>>\bigoplus_{j=1}^2\bigoplus_{i=0}^{\infty}\Gam(V_j(0),{\cal O}_{V_j(0)}u^{[i]}). 
\end{CD}
\end{equation*} 
Here $\partial$'s are the usual boundary morphsims in \v{C}ech cohomologies. 
Let $\ol{\om}(0)$ be the cohomology class of $(0,1)
\in \Gam(V_1(0),{\cal O}_{V_1(0)})\oplus 
\Gam(V_2(0),{\cal O}_{V_2(0)})$. 
Let $\om(0)$ be the cohomology class of $(d\log t_1,d\log t_2)\oplus(-u,u)
\in \bigoplus_{j=1}^2\Gam(U_j(0),\Om^1_{U_j(0)/s})\oplus 
\bigoplus_{j=1}^2\Gam(V_j(0)u,{\cal O}_{V_j(0)}u)$.
\par 
The cohomology sheaf $H^1_{\rm dR}({\cal E}/\Del)$ 
is calculated by the $H^1$ of the following double complex:  
\begin{equation*} 
\begin{CD}
%\bigoplus_{j=1}^2\bigoplus_{i=0}^{\infty}\Gam(U_j,\Om^2_{U_j/\Del})
%@>{\partial}>>
%\bigoplus_{j=1}^2\bigoplus_{i=0}^{\infty}\Gam(V_j,\Om^2_{V_j/\Del})\\
%@A{d}AA @AA{-d}A \\
\bigoplus_{j=1}^2\Gam(U_j,\Om^1_{U_j/\Del})@>{\partial}>>
\bigoplus_{j=1}^2\Gam(V_j,\Om^1_{V_j/\Del})\\
@A{d}AA @AA{-d}A \\
\bigoplus_{j=1}^2\bigoplus_{i=0}^{\infty}\Gam(U_j,{\cal O}_{U_j})
@>{\partial}>>\bigoplus_{j=1}^2\Gam(V_j,{\cal O}_{V_j}). 
\end{CD}
\end{equation*} 
Let $\ol{\om}$ be the cohomology class of 
$(0,1)\in \Gam(V_1,{\cal O}_{V_1})\oplus \Gam(V_2,{\cal O}_{V_2})$ 
and let $\om$ be the cohomology class of $(d\log t_1,d\log t_2)
\in \bigoplus_{j=1}^2\Gam(V_j,\Om^1_{V_j/\Del})$.  
Then the morphism 
\begin{align*} 
\sig_{\Del(\eps)} \col H^1_{\rm dR}(E/s)\lo H^1_{\rm dR}({\cal E}/\Del)
\end{align*} is given by the following: 
\begin{align*} 
\sig_{\Del(\eps)}(\ol{\om}(0))=\ol{\om}, \quad \sig_{\Del(\eps)}(\om(0))=\om+q\ol{\om}.
\end{align*}   
In particular, $\sig_{\Del(\eps)}$ is not a filtered morphism on 
$\Del^*=\{q\in {\mab C}~\vert~0<\vert q\vert <1\}$. 
However the following inclusion holds:
\begin{align*} 
\sig_{\Del(\eps)}(F^i_HH^1_{\rm dR}(E/s))\subset 
\sum_{j=0}^iq^{i-j}F^i_HH^1_{\rm dR}({\cal E}/\Del) \quad (i,j\in {\mab N}),  
\end{align*}
where $F_H$ means the Hodge filtration. 
\end{exem}

\begin{rema}
%(1) 
See \cite[\S5]{ey} for the Hyodo-Kato section between the rigid cohomology of 
a Tate curve over a complete discrete valuation ring of mixed characteristics. 
%\par 
%(2) 
\end{rema}

As in \cite[(10.2.4)]{nhi} consider the morphism 
\begin{equation*} 
d/du_k: {\cal O}_S[\ul{u}] \owns u^{[i]}_k \lom u^{[i-1]}_k\in {\cal O}_S[\ul{u}] 
\quad (1\leq k\leq r). 
\tag{6.14.1}\label{ali:eyufnt}
\end{equation*}
Because this morphism is independent of the choice of $\ul{u}$, we have the following morphism 
\begin{equation*}
D_k \col \Gam_{{\cal O}_T}(L_S)\lo \Gam_{{\cal O}_T}(L_S). 
\tag{6.14.2}\label{eqn:bufnt}
\end{equation*}
This morphism induces the following morphism 
\begin{equation*} 
D_k: {\cal E}\otimes_{{\cal O}_{\cal X}}\Om^{\bul}_{X/\os{\circ}{S}}(\log D)[U_S]
\lo {\cal E}\otimes_{{\cal O}_{\cal X}}\Om^{\bul}_{X/\os{\circ}{S}}(\log D)[U_S]. 
\tag{6.14.3}\label{ali:eufant}
\end{equation*}
\par 
Let the notations be as in the beginning of {\rm \S\ref{sec:da}}. 
Set $S^k:=(\os{\circ}{S},(\oplus_{j\not =k}M_j\lo {\cal O}_S)^a)$. 
Then we have the following exact sequence 
\begin{align*} 
0\lo {\cal E}\otimes_{{\cal O}_{\cal X}}\Om^{\bul}_{X/S}(\log D)[-1]
\os{d\log t_k\wedge }{\lo} 
{\cal E}\otimes_{{\cal O}_{\cal X}}\Om^{\bul}_{X/S^k}(\log D)
\lo 
{\cal E}\otimes_{{\cal O}_{\cal X}}\Om^{\bul}_{X/S}(\log D)\lo 0. 
\tag{6.14.4}\label{ali:eufbant}
\end{align*} 
Hence we have the following boundary morphism 
\begin{align*} 
N_k\col {\cal E}\otimes_{{\cal O}_{\cal X}}\Om^{\bul}_{X/S}(\log D) \lo
{\cal E}\otimes_{{\cal O}_{\cal X}}\Om^{\bul}_{X/S}(\log D). 
\tag{6.14.5}\label{ali:eufnt}
\end{align*} 

\begin{prop}\label{prop:monf}
The following diagram is commutative: 
\begin{equation*} 
\begin{CD}
{\cal E}\otimes_{{\cal O}_{\cal X}}\Om^{\bul}_{X/S}(\log D)
@>{N_k}>> 
{\cal E}\otimes_{{\cal O}_{\cal X}}\Om^{\bul}_{X/S}(\log D)\\
@A{\simeq}AA @AA{\simeq}A \\ 
{\cal E}\otimes_{{\cal O}_{\cal X}}\Om^{\bul}_{X/\os{\circ}{S}}(\log D)[U_S]
@>{-D_k}>> {\cal E}\otimes_{{\cal O}_{\cal X}}\Om^{\bul}_{X/\os{\circ}{S}}(\log D)[U_S].  
\end{CD} 
\tag{6.15.1}\label{cd:efnt}
\end{equation*}
\end{prop}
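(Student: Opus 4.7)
The plan is to verify the commutativity of \eqref{cd:efnt} by unwinding the definitions of $N_k$ and $D_k$ and tracing a cocycle through both composites. The vertical quasi-isomorphism $\pi$ from (\ref{theo:afp}) sends $\al=\sum_I u^{[I]}\otimes \al_I \in {\cal E}\otimes_{{\cal O}_{\cal X}}\Om^{\bul}_{X/\os{\circ}{S}}(\log D)[U_S]$ to $\al_0|_{X/S}$, so I will represent any cohomology class on top by lifting it to a cocycle $\al$ in the Hirsch extension and then compute both sides.

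First, reading off the coefficient of $u^{[0]}$ in the relation $\nabla(\al)=0$ using formula \eqref{eqn:bdff} gives
\[
d\al_0 + \sum_{j=1}^{r} d\log t_j \wedge \al_{e_j} = 0 \quad \text{in } {\cal E}\otimes_{{\cal O}_{\cal X}}\Om^{\bul+1}_{X/\os{\circ}{S}}(\log D).
\]
Projecting to ${\cal E}\otimes_{{\cal O}_{\cal X}}\Om^{\bul+1}_{X/S^k}(\log D)$, where $d\log t_j=0$ for $j\neq k$ but $d\log t_k$ survives, the terms for $j\neq k$ vanish, leaving
\[
d(\al_0|_{X/S^k}) = -\,d\log t_k \wedge (\al_{e_k}|_{X/S^k}).
\]
Since $\al_0|_{X/S^k}$ is a lift of $\pi(\al)=\al_0|_{X/S}$ from $\Om^{\bul}_{X/S}$ to $\Om^{\bul}_{X/S^k}$ through the short exact sequence \eqref{ali:eufbant}, the definition of the connecting morphism yields $N_k(\pi(\al)) = -\al_{e_k}|_{X/S}$.

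On the other hand, $D_k$ simply shifts the $u_k$-index: $D_k(\al)=\sum_{I:\,I_k\geq 1}u^{[I-e_k]}\otimes \al_I$, so its $u^{[0]}$-component is $\al_{e_k}$ and therefore $\pi(D_k(\al))=\al_{e_k}|_{X/S}$. Combining, $N_k\circ\pi=\pi\circ(-D_k)$ on every cocycle $\al$. To upgrade this to an identity in the derived category, one notes that the pair $(\al_{e_k}|_{X/S},\al_0|_{X/S^k})$ defines, up to sign, an explicit cocycle in the mapping cone of $d\log t_k\wedge\col \Om^{\bul}_{X/S}[-1]\lo \Om^{\bul}_{X/S^k}$, giving a chain-level model for the connecting morphism $N_k$ whose projection to the $\Om^{\bul}_{X/S}$-component of the cone is precisely $-\pi(D_k(\al))$; since every cohomology class of ${\cal E}\otimes_{{\cal O}_{\cal X}}\Om^{\bul}_{X/S}(\log D)$ arises via $\pi$ from \eqref{theo:afp}, commutativity of \eqref{cd:efnt} in $D^+$ follows.

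The main obstacle is the careful bookkeeping of signs in the connecting morphism for \eqref{ali:eufbant} together with the verification that the projection ${\cal E}\otimes_{{\cal O}_{\cal X}}\Om^{\bul}_{X/\os{\circ}{S}}(\log D)\lo {\cal E}\otimes_{{\cal O}_{\cal X}}\Om^{\bul}_{X/S^k}(\log D)$ annihilates $d\log t_j$ for $j\neq k$ while retaining $d\log t_k$ (otherwise the extraction of $N_k$ from the $u^{[0]}$-coefficient identity would be compromised). The chain-map property of $D_k$ itself is immediate from \eqref{eqn:bdff} via the commutator identity $[\nabla, D_k]=0$ checked directly on generators $u^{[I]}\otimes c$.
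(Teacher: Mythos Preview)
Your proof is correct and is precisely the standard argument the paper alludes to when it writes ``Because the proof is standard, we omit the proof (cf.~\cite[(10.3)]{nhi}).'' In fact the same computation (in ordinary-power rather than divided-power notation, hence with the extra factor $(k_l+1)$) appears explicitly in the paper's proof of Proposition~\ref{prop:ison}, where the cocycle relation in the Hirsch extension is unwound to obtain $N_l(\ol{\om}_{\ul{k}})=-(k_l+1)\ol{\om}_{\ul{k}+e_l}$; your derivation of $N_k(\pi(\alpha))=-\alpha_{e_k}|_{X/S}$ is the divided-power version of the same identity, and your mapping-cone remark correctly upgrades the cohomological identity to a statement in $D^+$.
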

\begin{proof} 
Because the proof is standard, we omit the proof 
(cf.~\cite[(10.3)]{nhi}).  
\end{proof} 

Consider the local situation in the beginning of \S\ref{sec:da}. 
and the following morphism 
\begin{align*} 
L_S \owns \sum_{i=1}^rc_i u_i\lom \sum_{i=1}^rc_i (t_i+\al_i(\ul{t})) \in {\cal O}_{\ol{S}}
\tag{6.15.2}\label{ali:osials} 
\end{align*} 
instead of the morphism (\ref{ali:osils}), where $\al(\ul{t})\in \Gam(\ol{S},{\cal O}_{\ol{S}})$. 
Here $\ul{t}=(t_1,\ldots,t_r)$. 
Let 
\begin{align*}  
\sig_{V,\{\al_i(\ul{t})\}_{i=1}^r}\col p^*_{V}R^qf_{S*}({\cal E}\otimes_{{\cal O}_{\cal X}}\Om^{\bul}_{X/S}(\log D))
\os{\sim}{\lo}  
R^qf_*({\cal E}\otimes_{{\cal O}_{\cal X}}\Om^{\bul}_{{\cal X}/S}(\log {\cal D}))_V 
\tag{6.15.3}\label{cd:aabzxd}
\end{align*} 
be the isomorphism by the use of the morphism (\ref{ali:osials}) instead of 
the morphism (\ref{ali:osils}).

\begin{prop}\label{prop:ison}
The isomorphism 
\begin{align*}  
\sig_{V,\{\al_i(\ul{t})\}_{i=1}^r}\circ \sig_V^{-1} \col 
p^*_{V}R^qf_{S*}({\cal E}\otimes_{{\cal O}_{\cal X}}\Om^{\bul}_{X/S}(\log D))
\lo 
p^*_{V}R^qf_{S*}({\cal E}\otimes_{{\cal O}_{\cal X}}\Om^{\bul}_{X/S}(\log D)) 
\tag{6.16.1}\label{cd:amzxd}
\end{align*} 
is equal to 
${\rm id}_V\otimes{\rm exp}(-\sum_{l=1}^r\al_l(\ul{0})N_l)$. 
\end{prop}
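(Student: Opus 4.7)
The plan is to trace the comparison of the two isomorphisms down to the Hirsch extension, where their difference becomes a PD-translation automorphism, whose effect on cohomology can then be computed via (\ref{prop:monf}).

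First, observe that $\sig_V$ and $\sig_{V,\{\al_i(\ul t)\}}$ arise from the $\infty$-adic Yamada section by exactly the same construction, differing only in the choice of ${\cal O}_S$-algebra morphism $\Gam_{{\cal O}_S}(L_S)\to {\cal O}_{\ol S}$ entering $\sig_{({\cal X},{\cal D})/\ol S}$ of (\ref{ali:xxd}): the morphism (\ref{ali:osls}) sending $u_i\mapsto t_i$ (call it $\vphi_0$) versus (\ref{ali:osials}) sending $u_i\mapsto t_i+\al_i(\ul t)$ (call it $\vphi_\al$). Thus $\sig_V^{-1}\circ \sig_{V,\{\al\}}$ is the automorphism induced on $R^qf_{S*}$-cohomology by the comparison of these two Yamada sections.

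Next, specialize to the special fiber $S$. Pulling back along $\iota_S\col S\sus \ol S$ sends $t_i\lom 0$, so $\vphi_0|_S\col u_i\mapsto 0$ while $\vphi_\al|_S\col u_i\mapsto c_i$, where $c_i:=\al_i(\ul 0)\in{\mab C}$. Let $T_{\ul c}\col \Gam_{{\cal O}_S}(L_S)\os{\sim}{\lo}\Gam_{{\cal O}_S}(L_S)$ be the ${\cal O}_S$-algebra automorphism $u_i\mapsto u_i+c_i$; then $\vphi_\al|_S=\vphi_0|_S\circ T_{\ul c}$. A direct check using $(u_i+c_i)^{[e_i]}=\sum_{j_i}c_i^{[j_i]}u_i^{[e_i-j_i]}$ shows that $T_{\ul c}$ commutes with the Hirsch boundary (\ref{eqn:bdff}), so it lifts to a chain automorphism of ${\cal E}\otimes_{{\cal O}_{\cal X}}\Om^{\bul}_{X/\os{\circ}{S}}(\log D)[U_S]$.

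By the divided-power Taylor formula this chain automorphism equals $\exp(\sum_{l=1}^r c_l D_l)$, with $D_l$ the PD-derivation (\ref{eqn:bufnt}). Combining this with (\ref{prop:monf}), under the quasi-isomorphism ${\cal E}\otimes_{{\cal O}_{\cal X}}\Om^{\bul}_{X/\os{\circ}{S}}(\log D)[U_S]\os{\sim}{\lo}{\cal E}\otimes_{{\cal O}_{\cal X}}\Om^{\bul}_{X/S}(\log D)$ the operator $D_l$ corresponds to $-N_l$; hence after applying $R^qf_{S*}$ the restriction of $\sig_V^{-1}\circ \sig_{V,\{\al\}}$ to $S$ is ${\rm id}\otimes \exp(-\sum_l c_l N_l)$, matching the claimed formula.

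The final and hardest step is to promote this special-fiber identification to one on all of $V$. By (\ref{coro:nsii}) and (\ref{coro:ls}) both the source and target of $\sig_V^{-1}\circ \sig_{V,\{\al\}}$ are locally free ${\cal O}_{\ol S}$-modules compatible with base change, and $\exp(-\sum_l\al_l(\ul 0)N_l)$ is by construction the ${\cal O}_V$-linear extension of its restriction to $S$. Combining ${\cal O}_V$-linearity of $\sig_V^{-1}\circ \sig_{V,\{\al\}}$ with the contravariant functoriality (\ref{prop:rks}) for $\iota_S$ then yields the required identity on $V$. The main obstacle is this last step: verifying that the ${\cal O}_V$-linear automorphism is genuinely constant in $\ul t$ (rather than acquiring $\ul t$-dependent matrix entries), which is the analogue of the rigidity/base-change argument appearing in the proof of (\ref{theo:nm}).
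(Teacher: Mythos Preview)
Your argument is correct and is essentially a conceptual repackaging of the paper's proof. The paper carries out the same computation by hand at the \v{C}ech level: starting from a lift $\sum_{\ul k}\ul u^{\ul k}\om_{\ul k}$ of $\ol\om_{\ul 0}$, it rederives the relation $N_l(\ol\om_{\ul k})=-(k_l+1)\ol\om_{\ul k+e_l}$ (the cocycle-level content of (\ref{prop:monf})), solves for $\ol\om_{\ul k}$ in terms of $N^{\ul k}(\ol\om_{\ul 0})$, and then evaluates $\sum_{\ul k}(\ul t+\ul\al(\ul t))^{\ul k}\ol\om_{\ul k}\vert_{\ul t=0}$. Your route via $T_{\ul c}=\exp(\sum_l c_l D_l)$ and (\ref{prop:monf}) is cleaner because it avoids reproving that relation and makes the role of the PD-translation transparent; the paper's route has the advantage of being entirely explicit at the cocycle level and hence self-contained without invoking (\ref{prop:monf}).

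On your final paragraph: your worry is well placed, but note that the paper's proof has exactly the same feature---it also only evaluates at $\ul t=0$ (the ``$\vert_{\ul t=0}$'' in the displayed computation). In other words, both arguments establish the identity after restriction along $\iota_S\col S\hookrightarrow V$, and neither supplies an independent argument that the ${\cal O}_V$-linear automorphism is constant along the fibers of $p_V$. So your proof is at least as complete as the paper's on this point; the substantive content of the proposition is the special-fiber identity, and your derivation of it is sound.
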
 
\begin{proof}
We have only to give a generalization of the argument in \cite[p.~248]{sti}. 
\par 
We use the multi-index notation $\ul{u}^{\ul{k}}:=u_1^{k_1}\cdots u_r^{k_r}$ 
$(k_1,\ldots, k_r\in {\mab N})$. Let ${\mathfrak U}$ be a Stein covering of $X$.
Set $\vert \ul{k}\vert =\sum_{j=1}^rk_j$. 
Let $C^{\bul}({\mathfrak U},
\iota^{-1}({\cal E}\otimes_{{\cal O}_{\cal X}}\Om^{\bul}_{X/\os{\circ}{S}}(\log D)))$ 
be the \v{C}ech complex.  
Let $\sum_{\vert \ul{k}\vert=0}^{\vert \ul{s}\vert}\ul{u}^{\ul{k}}\om_{\ul{k}}\in 
C^q({\mathfrak U},
\iota^{-1}({\cal E}\otimes_{{\cal O}_{\cal X}}
\Om^{\bul}_{{\cal X}/\os{\circ}{S}}(\log {\cal D})))$ 
$(q\in {\mab N})$ be a cocycle. Set $e_l:=(0,\ldots,0,\us{l}{1},0,\ldots,0)$. 
Then $d\om_{\ul{s}}=0$ and 
$d\om_{\ul{k}}+\sum_{l=1}^r(k_l+1)d\log t_l\wedge \om_{\ul{k}+e_l}=0$ $(\ul{k}<\ul{s})$. 
The last equation implies that 
$d\om_{\ul{k}}+(k_l+1)d\log t_l\wedge \om_{\ul{k}+e_l}=0$ in 
$C^q({\mathfrak U},
\iota^{-1}({\cal E}\otimes_{{\cal O}_{\cal X}}
\Om^{\bul}_{{\cal X}/S^l}(\log {\cal D})))$.  
Hence the image $\ol{\om}_{\ul{k}}$ of $\om_{\ul{k}}$ in 
$C^q({\mathfrak U},
\iota^{-1}({\cal E}\otimes_{{\cal O}_{\cal X}}\Om^{\bul}_{X/S}(\log D)))$ 
satisfies the following relations: 
$N_l^{s_l+1}(\ol{\om}_{\ul{0}})=0$ and 
$N_l(\ol{\om}_{\ul{k}})=-(k_l+1)\ol{\om}_{\ul{k}+e_l}$ $(\ul{k}<\ul{s})$.  
Hence $\ol{\om}_{\ul{k}}=-\sum_{l=1}^r\dfrac{N^{k_1}_1\cdots 
N^{k_r}_r(\ol{\om}_{\ul{0}})}{k_1!\cdots k_r!}$. 
Then 
\begin{align*}  
&\sig_{V,\{\al_i(\ul{t})\}_{i=1}^r}\circ \sig_V^{-1}
(\ol{\om}_{\ul{0}})=
\sig_{V,\{\al_i(\ul{t})\}_{i=1}^r}
(\sum_{\vert \ul{k}\vert=0}^{\vert \ul{s}\vert}\ul{u}^{\ul{k}}\ol{\om}_{\ul{k}})=
\sum_{\vert \ul{k}\vert=0}^{\vert \ul{s}\vert}(\ul{t}+\ul{\al}(\ul{t}))^{\ul{k}}\ol{\om}_{\ul{k}}
\vert_{\ul{t}=0}\\
&=-\sum_{l=1}^r\sum_{k_1,\ldots,k_r\geq 0}\al_1(\ul{0})^{k_1}\cdots \al_r(\ul{0})^{k_r}
\dfrac{N^{k_1}_1\cdots N^{k_r}_r(\ol{\om}_{\ul{0}})}{k_1!\cdots k_l!}\\
&={\rm exp}(-\sum_{l=1}^r\al_l(\ul{0})N_l)(\ol{\om}_{\ul{0}}). 
\end{align*} 
\end{proof}

\begin{coro}[{\bf Invariance theorem of the pull-backs of morphisms}]\label{coro:ucrm}
Let $B'$ and $S'$ be an analogous log analytic spaces to $B$ and $S$, respectively. 
Let $({\cal X}',{\cal D}')/B'$ 
be an analogous log analytic space to $({\cal X},{\cal D})/B$ 
and let $(X',D')$ be an analogous log analytic space to 
$(X,D)$ for $({\cal X}',{\cal D}')/B'$. 
%For $i=1,2$, let 
%\begin{equation*} 
%\begin{CD}
%({\cal X},{\cal D})@>{g_i}>> ({\cal X}',{\cal D}')\\
%@V{f}VV @VV{f'}V \\
%B @>{u}>> B'
%\end{CD}
%\end{equation*}
%be two commutative diagrams of log analytic spaces. 
Assume that we are given the following diagram 
\begin{equation*} 
\begin{CD}
(X,D)@>{g}>> (X',D')\\
@VVV @VVV \\
S@>{u\vert_S}>> S'\\
@V{\bigcap}VV @VV{\bigcap}V \\
B @>{u}>> B'. 
\end{CD}
\end{equation*}
%Let $t$ be a point of $S$. 
Let $({\cal E}',\nabla')$ be an analogous integrable connection to $({\cal E},\nabla)$ 
for ${\cal X}'/\os{\circ}{S}{}'$. 
Set $E^{\square}:={\cal E}^{\square}$, where $\square=$nothing or $'$. 
Let $\Phi \col g^*(E')\lo E$ 
be a morphism of ${\cal O}_X$ modules fitting into 
the following commutative diagram$:$
\begin{equation*} 
\begin{CD}
g^*(E')@>{}>> E\\
@V{g^*(\nabla')}VV @VV{\nabla}V \\
g^*(E'\otimes_{{\cal O}_{X'}}\Om^1_{X'/\os{\circ}{S}{}'}(\log D'))
@>>> E\otimes_{{\cal O}_X}\Om^1_{X/\os{\circ}{S}{}}(\log D). 
\end{CD}
\end{equation*}
%If the two diagrams above are the same on $X$, 
Then there exist $\eps>0$ and $\eps'>0$ 
such that $u(B(\eps))  \subset B'(\eps')$ 
such that there exists the following canonical morphism 
\begin{align*}
g(\eps,\eps')^*\col &u(\eps,\eps')^*
(R^qf'(\eps')_*({\cal E}'\otimes_{{\cal O}_{{\cal X}'}}\Om^{\bul}_{{\cal X}'(\eps')/B'(\eps')}
(\log {\cal D}'(\eps'))))\\
&\lo 
R^qf(\eps)_*({\cal E}\otimes_{{\cal O}_{\cal X}}\Om^{\bul}_{{\cal X}(\eps)
/B(\eps)}(\log {\cal D}(\eps))). 
\end{align*} 
%for $i=1,2$ are equal. 
Here $u(\eps,\eps')\col B(\eps)\lo B'(\eps')$ is the induced morphism
by $u$. The morphism $g(\eps,\eps')$ satisfies the usual transitive relation. 
If the morphism $g$ is lifted to a morphism 
$\wt{g}\col {\cal X}(\eps)\lo {\cal X}'(\eps')$ and if 
the morphism $\Phi$ is lifted to a morphism 
$\wt{\Phi} \col g^*({\cal E}')\lo {\cal E}$ 
be a morphism of ${\cal O}_{\cal X}$ modules fitting into 
the following commutative diagram$:$
\begin{equation*} 
\begin{CD}
\wt{g}^*({\cal E}'(\eps'))@>{}>> {\cal E}(\eps)\\
@V{g^*(\nabla')}VV @VV{\nabla}V \\
\wt{g}^*({\cal E}'\otimes_{{\cal O}_{{\cal X}'}}\Om^1_{{\cal X}'(\eps')/\os{\circ}{S}{}'}(\log {\cal D}'(\eps')))
@>>> {\cal E}\otimes_{{\cal O}_{\cal X}}\Om^1_{{\cal X}(\eps)/\os{\circ}{S}{}}(\log {\cal D}(\eps)), 
\end{CD}
\end{equation*}
where ${\cal E}^{\square}(\eps^{\square})=
{\cal E}^{\square}\vert_{{\cal X}^{\square}(\eps^{\square})}$, 
then 
\begin{align*}
g(\eps,\eps')^*=\wt{g}^*\col &u(\eps,\eps')^*
(R^qf'(\eps')_*({\cal E}'\otimes_{{\cal O}_{{\cal X}'}}\Om^{\bul}_{{\cal X}'(\eps')/B'(\eps')}
(\log {\cal D}'(\eps'))))\\
&\lo 
R^qf(\eps)_*({\cal E}\otimes_{{\cal O}_{\cal X}}\Om^{\bul}_{{\cal X}(\eps)
/B(\eps)}(\log {\cal D}(\eps))). 
\end{align*}
\end{coro}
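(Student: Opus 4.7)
The plan is to reduce to the Invariance Theorem (\ref{theo:nm}) and exploit the contravariant functoriality of the $\infty$-adic Yamada section. First, applying (\ref{theo:nm}) to both families yields tubular neighborhoods $V \supset S$ in $\ol{B}$ and $V' \supset S'$ in $\ol{B}'$ on which canonical isomorphisms $\sig_V$ and $\sig'_{V'}$ are defined. Since $u$ is continuous and $u(S) \subset S'$, one can choose $\eps' > 0$ with $B'(\eps') \subset V'$, then $\eps > 0$ so small that $u(B(\eps)) \subset B'(\eps')$ and $B(\eps) \subset V$; this produces the desired morphism $u(\eps,\eps') \colon B(\eps) \to B'(\eps')$.

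Second, the pair $(g,\Phi)$ induces, by standard log de Rham functoriality on the special fibers, a canonical pullback
\begin{equation*}
g^*_S \colon (u\vert_S)^* R^qf'_{S'*}({\cal E}' \otimes_{{\cal O}_{{\cal X}'}} \Om^\bul_{X'/S'}(\log D')) \lo R^qf_{S*}({\cal E} \otimes_{{\cal O}_{\cal X}} \Om^\bul_{X/S}(\log D)).
\end{equation*}
Let $\pi\colon B(\eps) \to \os{\circ}{S}$ and $\pi'\colon B'(\eps') \to \os{\circ}{S}{}'$ denote the projections. Using $\pi' \circ u(\eps,\eps') = u\vert_S \circ \pi$, set
\begin{equation*}
g(\eps,\eps')^* := \sig_V \circ u(\eps,\eps')^*(\pi'{}^* g^*_S) \circ (u(\eps,\eps')^*\sig'_{V'})^{-1}.
\end{equation*}
Transitivity is immediate from transitivity of $g^*_S$ on log special fibers together with the naturality of the isomorphisms $\sig_V$.

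The substantive part is to show $g(\eps,\eps')^* = \wt{g}^*$ whenever $g,\Phi$ admit lifts $\wt{g},\wt{\Phi}$ to the tubular neighborhoods. Equivalently, one must verify commutativity of the square
\begin{equation*}
\begin{CD}
u(\eps,\eps')^*\pi'{}^* R^qf'_{S'*}(\cdots') @>{\sig'_{V'}}>> u(\eps,\eps')^* R^qf'(\eps')_*(\cdots') \\
@V{u(\eps,\eps')^*(\pi'{}^* g^*_S)}VV @VV{\wt{g}^*}V \\
\pi^* R^qf_{S*}(\cdots) @>{\sig_V}>> R^qf(\eps)_*(\cdots)
\end{CD}
\end{equation*}
where $(\cdots)$ and $(\cdots')$ are shorthand for the respective log de Rham complexes. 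Tracing through the construction (\ref{cd:eaoxd}) of $\sig$ from the $\infty$-adic Yamada section, each constituent arrow is contravariantly functorial: the Hirsch-extension quasi-isomorphism is functorial by (\ref{prop:rks}) and (\ref{prop:raks}); the projection onto Hirsch degree zero and the restriction to the special fiber are functorial by construction; and the morphism $\Gam_{{\cal O}_S}(L_S) \to {\cal O}_{\ol{S}}$ of (\ref{ali:osls}) intertwines the Hirsch pullback $u^*$ (from (\ref{ali:kxvef})) with pullback of the local coordinates $t_j$. This chain of compatibilities forces the square to commute, giving $g(\eps,\eps')^* = \wt{g}^*$. The main technical obstacle lies precisely in this verification: a careful, layer-by-layer check that $\wt{g}^*$ agrees with the sandwich-constructed morphism once the Yamada isomorphisms are inverted.
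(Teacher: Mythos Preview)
Your proposal is correct and follows exactly the approach the paper intends: the paper states this corollary without proof, treating it as an immediate consequence of the Invariance Theorem (\ref{theo:nm}) together with the contravariant functoriality of the $\infty$-adic Yamada section recorded in (\ref{prop:rks}), (\ref{coro:sqi}), and (\ref{theo:tesc}). Your definition $g(\eps,\eps')^* = \sig_V \circ (\text{pullback of } g^*_S) \circ (u(\eps,\eps')^*\sig'_{V'})^{-1}$ and the commutativity check via layer-by-layer functoriality of the Hirsch-extension diagram (\ref{cd:eaoxd}) are precisely the details the paper leaves implicit; note only that the middle factor should be written $\pi^*(g^*_S)$ rather than $u(\eps,\eps')^*(\pi'{}^* g^*_S)$, since $g^*_S$ lives over $S$, not $S'$ --- the identification $\pi' \circ u(\eps,\eps') = u\vert_S \circ \pi$ you invoke is exactly what makes this the same map.
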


\begin{coro}[{\bf Invariance of relative log de Rham complexes}]\label{coro:a}
Let the notations be as in {\rm (\ref{coro:ucrm})}. 
Assume that $g_1=g_2$ and $\Phi_1=\Phi_2$. 
Assume also that $(X',D')/S'=(X,D)/S$, 
that $u_S={\rm id}_S$, $g_1\vert_{(X,D)}={\rm id}_{(X,D)}$,
${\cal E}\vert_{X_S}={\cal E}'_{X_S}$  
and $\Phi_1\vert_{{\cal E}\vert_{X_S}}={\rm id}\vert_{{\cal E}\vert_{X_S}}$. 
Then there exists a positive integer $\eps$ such that 
\begin{align*} 
R^qf'(\eps)_*({\cal E}'\otimes_{{\cal O}_{{\cal X}'}}\Om^{\bul}_{{\cal X}'(\eps)/\ol{S}'(\eps)}
(\log {\cal D}'(\eps))))
=
R^qf(\eps)_*({\cal E}\otimes_{{\cal O}_{\cal X}}\Om^{\bul}_{{\cal X}(\eps)
/\ol{S}(\eps)}(\log {\cal D}(\eps))). 
\tag{6.18.1}\label{ali:mosuox}
\end{align*} 
\end{coro}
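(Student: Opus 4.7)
The plan is to reduce the claim to an application of the invariance theorem (\ref{theo:nm}) together with the functoriality established in (\ref{coro:ucrm}). The whole content is essentially that the canonical isomorphism $\sigma$ produced by the invariance theorem is dictated only by the data on the special fibre, so that two different realisations $({\cal X},{\cal D})/\ol{S}$ and $({\cal X}',{\cal D}')/\ol{S}{}'$ with the same restriction to $S$ must yield the same higher direct images after restricting to a sufficiently small tubular neighbourhood.

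First I would apply (\ref{theo:nm}) separately to $({\cal X},{\cal D})/\ol{S}$ and to $({\cal X}',{\cal D}')/\ol{S}{}'$. For a sufficiently small tubular neighbourhood $V \supset S$ in $\ol{S}$ and $V' \supset S'$ in $\ol{S}{}'$ this produces canonical isomorphisms of locally free modules
\[
\sigma_V \col p_V^{-1} R^q f_{S*}({\cal E}\otimes_{{\cal O}_{\cal X}}\Om^{\bul}_{X/S}(\log D)) \os{\sim}{\lo} R^q f_*({\cal E}\otimes_{{\cal O}_{\cal X}}\Om^{\bul}_{{\cal X}/\ol{S}}(\log {\cal D}))|_V
\]
and an analogous $\sigma'_{V'}$ on the primed side. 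By the hypotheses $(X',D')/S' = (X,D)/S$ and ${\cal E}|_{X_S} = {\cal E}'|_{X_S}$, the sources of $\sigma_V$ and $\sigma'_{V'}$ coincide as ${\cal O}_S$-modules.

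Next I would choose $\eps > 0$ small enough that both $\ol{S}(\eps) \subset V$ and $u(\ol{S}(\eps)) \subset \ol{S}{}'(\eps) \subset V'$, which is possible by (\ref{coro:ucrm}). Since $u_S = {\rm id}_S$, the pullback $u(\eps,\eps)^* p_{V'}^{-1}$ agrees with $p_V^{-1}$ on sheaves pulled back from $S$, so the composite $\sigma_V \circ u(\eps,\eps)^*(\sigma'_{V'})^{-1}$ yields a canonical isomorphism between $u(\eps,\eps)^* R^q f'(\eps)_*(\cdots)$ and $R^q f(\eps)_*(\cdots)$, which is the content of (\ref{ali:mosuox}).

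The main obstacle is to verify that this composite is literally the identity, so that the equality (rather than merely a canonical isomorphism) in (\ref{ali:mosuox}) is justified. For this I would invoke (\ref{coro:ucrm}) to identify the composite with $g(\eps,\eps)^*$: the hypotheses $g_1|_{(X,D)} = {\rm id}_{(X,D)}$ and $\Phi_1|_{{\cal E}|_{X_S}} = {\rm id}$ make the induced map on $R^q f_{S*}({\cal E}\otimes_{{\cal O}_{\cal X}}\Om^{\bul}_{X/S}(\log D))$ the identity. Then I would combine this with (\ref{prop:ison}), which expresses how $\sigma$ is modified by a reparameterisation $t_i \mapsto t_i + \alpha_i(\ul{t})$ via the factor ${\rm exp}(-\sum_l \alpha_l(\ul{0}) N_l)$. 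The hypothesis $u_S = {\rm id}_S$ forces $u^*(t'_i) \equiv t_i$ modulo the maximal ideal at the origin, so each $\alpha_l(\ul{0})$ vanishes and all correction factors become trivial. Together these ingredients show that $g(\eps,\eps)^*$ agrees with the canonical identification, giving the asserted equality (\ref{ali:mosuox}) on a sufficiently small $\ol{S}(\eps)$.
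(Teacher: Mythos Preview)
Your proposal is correct and follows the route the paper implicitly intends: the corollary is stated without proof, as an immediate consequence of the invariance theorem (\ref{theo:nm}) together with the functoriality in (\ref{coro:ucrm}). The core observation---that both $\sigma_V$ and $\sigma'_{V'}$ identify the respective higher direct images with the \emph{same} sheaf $p_V^*R^qf_{S*}({\cal E}\otimes\Om^{\bul}_{X/S}(\log D))$, since the special-fibre data coincide---is exactly the point, and your composition $\sigma_V\circ u(\eps,\eps)^*(\sigma'_{V'})^{-1}$ is the canonical identification the paper calls ``$=$''.

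Your final paragraph, invoking (\ref{prop:ison}) to check that a reparametrisation $t_i\mapsto t_i+\al_i(\ul t)$ with $\al_i(\ul 0)=0$ contributes no monodromy correction, is more than the paper spells out but is a reasonable way to justify that the identification is independent of the auxiliary choice of $u$ extending ${\rm id}_S$; the paper would simply absorb this into the contravariant functoriality asserted in (\ref{theo:nm}) and (\ref{coro:ucrm}).
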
 

By the proof of (\ref{theo:nm}) we obtain the following: 

\begin{coro}\label{coro:pis}
Let $s$ be an exact point of $S$.  
%Consider the case $\os{\circ}{S}=\{O\}$. 
For a sufficiently close point $u\in \ol{S}=\Del^r$ from $s$ 
such that $p(u)=\os{\circ}{s}$, 
there exists a canonical contravariantly functorial isomorphism 
\begin{align*} 
{\cal O}_{\ol{S},u}\otimes_{{\cal O}_{{\cal O}_{S,s}}}
R^qf_{S*}({\cal E}\otimes_{{\cal O}_{\cal X}}\Om^{\bul}_{X/S}(\log D))_s
\os{\sim}{\lo} 
R^qf_*({\cal E}\otimes_{{\cal O}_{\cal X}}\Om^{\bul}_{{\cal X}/B}(\log {\cal D})))_u. 
\tag{6.19.1}\label{ali:mxocox}
\end{align*} 
Consequently there exists a canonical contravariantly functorial isomorphism 
\begin{align*} 
R^qf_{S*}({\cal E}\otimes_{{\cal O}_{\cal X}}\Om^{\bul}_{X/S}(\log D))\otimes
_{{\cal O}_{S}}\kap(s)
\os{\sim}{\lo}  
R^qf_{u*}({\cal E}\otimes_{{\cal O}_{\cal X}}\Om^{\bul}_{{\cal X}_u/u}
(\log {\cal D}_u))). 
\tag{6.19.2}\label{ali:mcuox}
\end{align*} 
Here $f_u\col ({\cal X}_u,{\cal D}_u)\lo \{u\}$ is the structural morphism. 
\end{coro}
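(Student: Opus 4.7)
The plan is to derive (\ref{coro:pis}) as a direct consequence of the Invariance theorem (\ref{theo:nm}). First I would apply (\ref{theo:nm}) to obtain, on an open neighbourhood $V$ of $S$ in $\ol{S}$ chosen small enough, the canonical isomorphism
\begin{align*}
\sig_V \col p^*_V R^qf_{S*}({\cal E}\otimes_{{\cal O}_{\cal X}}\Om^{\bul}_{X/S}(\log D))
\os{\sim}{\lo}
R^qf_*({\cal E}\otimes_{{\cal O}_{\cal X}}\Om^{\bul}_{{\cal X}/\ol{S}}(\log {\cal D}))\vert_V.
\end{align*}
By shrinking $V$ around $s$ if necessary, we may assume that every point $u$ ``sufficiently close to $s$'' with $p(u)=\os{\circ}{s}$ is contained in $V$.

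The isomorphism (\ref{ali:mxocox}) would then be obtained by passing to the stalk of $\sig_V$ at such a point $u$. Since $p$ is the projection $\ol{S}\lo \os{\circ}{S}$ and $p(u)=\os{\circ}{s}$, for any sheaf ${\cal G}$ on $S$ we have $(p^{-1}{\cal G})_u={\cal G}_s$, hence $(p^*_V{\cal G})_u={\cal O}_{\ol{S},u}\otimes_{{\cal O}_{S,s}}{\cal G}_s$. Applying this to ${\cal G}=R^qf_{S*}({\cal E}\otimes_{{\cal O}_{\cal X}}\Om^{\bul}_{X/S}(\log D))$ and unwinding the stalk of $\sig_V$ at $u$ produces exactly the morphism of (\ref{ali:mxocox}). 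The contravariant functoriality is inherited from that of $\sig_V$, which in turn descends from the contravariant functoriality of the $\infty$-adic Yamada section established in (\ref{prop:rks}), (\ref{prop:ys}) and (\ref{coro:sqi}).

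For the second isomorphism (\ref{ali:mcuox}), I would tensor (\ref{ali:mxocox}) with the residue field $\kap(u)$ over ${\cal O}_{\ol{S},u}$. Since we work over ${\mab C}$ and the map ${\cal O}_{S,s}\lo {\cal O}_{\ol{S},u}\lo \kap(u)$ factors through $\kap(s)={\mab C}=\kap(u)$, the left-hand side reduces to $R^qf_{S*}({\cal E}\otimes_{{\cal O}_{\cal X}}\Om^{\bul}_{X/S}(\log D))_s \otimes_{{\cal O}_{S,s}}\kap(s)$. On the right-hand side, by (\ref{coro:nsii}) the sheaf $R^qf_*({\cal E}\otimes_{{\cal O}_{\cal X}}\Om^{\bul}_{{\cal X}/\ol{S}}(\log {\cal D}))$ is locally free and commutes with base change along the strict inclusion $\{u\}\os{\sus}{\lo}\ol{S}$, so its stalk at $u$ tensored with $\kap(u)$ computes $R^qf_{u*}({\cal E}\otimes_{{\cal O}_{\cal X}}\Om^{\bul}_{{\cal X}_u/u}(\log {\cal D}_u))$. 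This yields (\ref{ali:mcuox}), and functoriality transports through both steps.

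Because the Invariance theorem (\ref{theo:nm}) does all the substantive work, no real obstacle remains. The only point demanding care is the routine verification that the canonical identification $(p^*_V{\cal G})_u\cong {\cal O}_{\ol{S},u}\otimes_{{\cal O}_{S,s}}{\cal G}_s$ matches the morphism displayed in (\ref{ali:mxocox}), together with the analogous matching of base change identifications on the target side; both are straightforward from the definition of $p^*_V$ and the local freeness in (\ref{coro:nsii}).
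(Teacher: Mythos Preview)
Your proposal is correct and follows the same route as the paper: the paper records this corollary with the single sentence ``By the proof of (\ref{theo:nm}) we obtain the following'', and you have simply spelled out the stalk-and-base-change argument that this sentence encodes, invoking the local freeness from (\ref{coro:nsii}) exactly where it is needed.
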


\begin{coro}\label{coro:sc}
Let the notations be as in {\rm (\ref{coro:pis})}. 
Assume that $u\in \ol{S}\setminus S$. 
%(1) 
%Consider the case $\os{\circ}{S}=\{O\}$. 
%Then $\ol{S}$ is the polydisc $\Del^r$ $(r\in {\mab Z}_{\geq 1})$. 
%In this case 
%$R^qf_*\iota_*({\cal E}\otimes_{{\cal O}_{\cal X}}\Om^{\bul}_{X/S}(\log D))$ 
%is a finite dimensional ${\mab C}$-vector space. 
%Set $(\Del^r)^*:=\{u \in \Del^r~\vert~M_u={\cal O}_u^*\}$ 
%and let $u$ be a point of $(\Del^r)^*$. 
%Let $O$ be the origin of $\Del^r$. Then $S$ is the log point of virtual dimension $r$ 
%whose underlying analytic space is $\{O\}$. 
Set ${\cal U}:={\cal X}\setminus {\cal D}$ and 
$V:={\rm Ker}(\nabla \col {\cal E}\vert_{{\cal U}_u}\lo 
{\cal E}\vert_{{\cal U}_u}\otimes_{{\cal O}_{{\cal U}_u}}\Om^1_{{{\cal U}_u}/{\mab C}})$.
If $u$ is sufficiently close from $s$, 
then there exists the following canonical isomorphisms 
\begin{align*} 
R^qf_{S*}({\cal E}\otimes_{{\cal O}_{\cal X}}\Om^{\bul}_{X/S}(\log D))\otimes
_{{\cal O}_{S}}\kap(s)
\os{\sim}{\lo}  R^qf_{u*}(\Om^{\bul}_{{\cal U}_u/u})
\os{\sim}{\longleftarrow} R^qf_{u*}(V). 
\tag{6.20.1}\label{ali:mcauox}
\end{align*} 
\end{coro}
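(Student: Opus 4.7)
The plan is to chain the statement of \ref{coro:pis} with two classical comparison results for the fiber ${\cal X}_u$, exploiting the fact that for $u\in \ol{S}\setminus S$ the log structure of $\{u\}$ is trivial and the restriction ${\cal X}_u$ is smooth over $\{u\}$ with ${\cal D}_u$ an ordinary NCD.

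First, I would invoke \ref{coro:pis} to obtain the canonical contravariantly functorial isomorphism
\begin{align*}
R^qf_{S*}({\cal E}\otimes_{{\cal O}_{\cal X}}\Om^{\bul}_{X/S}(\log D))\otimes_{{\cal O}_S}\kap(s)
\os{\sim}{\lo}
R^qf_{u*}({\cal E}\otimes_{{\cal O}_{\cal X}}\Om^{\bul}_{{\cal X}_u/u}(\log {\cal D}_u)),
\end{align*}
which gives the left-hand isomorphism in \ref{coro:sc} after the identification made in the next step. Here I use that $u$ is sufficiently close to $s$ with $\os{\circ}{p}(u)=\os{\circ}{s}$, so that \ref{coro:pis} applies.

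Next, since $u\in \ol{S}\setminus S$, the log analytic space $\{u\}$ has trivial log structure, so $\os{\circ}{\cal X}_u/\{u\}$ is smooth and the relative log de Rham complex $\Om^{\bul}_{{\cal X}_u/u}(\log {\cal D}_u)$ is the usual log de Rham complex of the smooth pair $(\os{\circ}{\cal X}_u,{\cal D}_u)$ over ${\mab C}$. Let $j\col {\cal U}_u\os{\sus}{\lo} {\cal X}_u$ be the open immersion. Because $({\cal E},\nabla)$ is locally nilpotent in the sense of \cite{kn}, the residues of $\nabla$ along ${\cal D}_u$ are nilpotent, and Deligne's logarithmic comparison theorem (see \cite{de}) yields the quasi-isomorphism
\begin{align*}
{\cal E}\otimes_{{\cal O}_{\cal X}}\Om^{\bul}_{{\cal X}_u/u}(\log {\cal D}_u)
\os{\sim}{\lo}
Rj_*({\cal E}\vert_{{\cal U}_u}\otimes_{{\cal O}_{{\cal U}_u}}\Om^{\bul}_{{\cal U}_u/u}).
\end{align*}
Applying $Rf_{u*}$ and using $Rf_{u*}\circ Rj_*=R(f_u\circ j)_*$ gives the middle isomorphism $R^qf_{u*}({\cal E}\otimes\Om^{\bul}_{{\cal X}_u/u}(\log {\cal D}_u))\os{\sim}{\lo} R^qf_{u*}({\cal E}\vert_{{\cal U}_u}\otimes\Om^{\bul}_{{\cal U}_u/u})$, which I will abbreviate as in the statement.

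Finally, the holomorphic Poincar\'e lemma with coefficients (local existence of holomorphic solutions for a flat connection on a smooth complex analytic space) shows that the inclusion
\begin{align*}
V\os{\sus}{\lo} {\cal E}\vert_{{\cal U}_u}\otimes_{{\cal O}_{{\cal U}_u}}\Om^{\bul}_{{\cal U}_u/u}
\end{align*}
is a quasi-isomorphism (this is the standard fact that an integrable connection resolves its flat sections on a smooth analytic space). Pushing forward by $f_{u*}$ gives the right-hand isomorphism $R^qf_{u*}(V)\os{\sim}{\lo} R^qf_{u*}({\cal E}\vert_{{\cal U}_u}\otimes\Om^{\bul}_{{\cal U}_u/u})$. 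Composing the three isomorphisms and noting that each step is canonical and contravariantly functorial yields the diagram claimed in \ref{coro:sc}. The only non-trivial ingredient is the logarithmic comparison theorem of Deligne, whose hypothesis of regularity and nilpotent residues is ensured by the locally nilpotent assumption on $({\cal E},\nabla)$; the rest is a straightforward composition.
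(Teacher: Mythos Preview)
Your proposal is correct and follows essentially the same route as the paper. The paper's proof is terser: it implicitly uses (\ref{coro:pis}) (via the clause ``let the notations be as in (\ref{coro:pis})'') and then simply cites Deligne \cite[II (3.13)(ii)]{de} for the logarithmic comparison step and \cite[I (2.27.2)]{de} for the holomorphic Poincar\'e lemma, which are precisely the two classical results you spell out in your second and third steps.

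One minor remark: your assertion that ``since $u\in \ol{S}\setminus S$, the log analytic space $\{u\}$ has trivial log structure'' is literally stronger than what the hypothesis $u\in \ol{S}\setminus S$ guarantees when $r>1$ (a point on a coordinate hyperplane but not at the origin still carries a nontrivial induced log structure). However, the Deligne citations in the paper's own proof require exactly the same smoothness of ${\cal X}_u$, so the intended reading of the corollary is that $u$ lies in the open stratum where all $t_i(u)\neq 0$; under that reading your argument and the paper's coincide.
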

\begin{proof} 
We obtain the first isomorphism in (\ref{ali:mcauox}) by \cite[II (3.13) (ii)]{de} 
%by (\ref{ali:oexs}),  
and 
the second isomorphism in (\ref{ali:mcauox}) by [loc.~cit., I (2.27.2)].
%and the third isomorphism in (\ref{ali:xsc}) .
%This is nothing but the generalization of the isomorphism (\ref{ali:dxa}). 
\end{proof}

\begin{coro}\label{coro:n}
{\rm (\ref{coro:rqe})} holds.
\end{coro}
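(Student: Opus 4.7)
My plan is to reduce the corollary to the already-established local result Corollary~\ref{coro:nsii}. Both local freeness of the higher direct image and compatibility with base change are local properties on~${\cal Y}$, so it suffices to verify them in a suitable neighborhood of each point $y\in{\cal Y}$.

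First, since $\os{\circ}{\cal Y}$ is smooth and $Y$ is an NCD, I would choose local coordinates $t_1,\ldots,t_n$ on $\os{\circ}{\cal Y}$ near~$y$ in which $Y$ is cut out by $t_1\cdots t_r=0$ for some $r\geq 0$; when $r=0$ (that is, $y\notin Y$) the log structure on~${\cal Y}$ is trivial near~$y$ and the assertion reduces to the classical Grauert-type theorem for a smooth proper morphism, so assume $r\geq 1$. Set $\os{\circ}{S}:=\{t_1=\cdots=t_r=0\}$ in this chart; then $\os{\circ}{S}$ is smooth, and letting $S$ be the corresponding analytic family of log points of virtual dimension~$r$ (with log structure inherited from~$M_{\cal Y}$), the chart of~${\cal Y}$ becomes canonically identified with an open sub-log analytic space of the $\ol{S}$ constructed in Section~\ref{sec:itst}.

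Next, by the definition of a proper strictly semistable morphism with horizontal NCD, after possibly shrinking the chart the restriction of $f\col{\cal X}\lo{\cal Y}$ and of~${\cal D}$ is exactly a semistable family ${\cal X}\vert_{\ol{S}}\lo\ol{S}$ with horizontal NCD in the sense of the opening of Section~\ref{sec:m}, and $({\cal E},\nabla)$ restricts to a locally nilpotent integrable connection on this family. Consequently Corollary~\ref{coro:nsii} applies directly and shows that $R^qf_*({\cal E}\otimes_{{\cal O}_{\cal X}}\Om^{\bul}_{{\cal X}/{\cal Y}}(\log{\cal D}))$ is a coherent locally free ${\cal O}_{\cal Y}$-module in this chart and commutes with base change. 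Running over all $y\in{\cal Y}$ gives the global conclusion.

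The main point to verify---and the only place where anything substantive has to be checked---is the local identification of~${\cal Y}$ with~$\ol{S}$ together with the matching of the given proper strictly semistable morphism with horizontal NCD against the local model in Section~\ref{sec:m}. Both verifications are routine consequences of the hypotheses (smoothness of~$\os{\circ}{\cal Y}$, NCD structure of~$Y$, and strict semistability of~$f$), so I do not expect any real obstruction beyond bookkeeping.
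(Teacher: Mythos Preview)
Your proposal is correct and follows essentially the same approach as the paper: the paper's proof is the one-liner ``The problem is local. Hence (\ref{coro:rqe}) follows from (\ref{theo:lff}),'' and you have simply spelled out the local identification of~${\cal Y}$ with~$\ol{S}$ that makes this reduction work. The only cosmetic difference is that you invoke Corollary~\ref{coro:nsii} rather than Theorem~\ref{theo:lff}, but the former is an immediate restatement of the latter, so the arguments coincide.
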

\begin{proof}
The problem is local. Hence (\ref{coro:rqe}) follows from (\ref{theo:lff}). 
\end{proof}

\bigskip
\bigskip
\parno
Yukiyoshi Nakkajima 
\parno
Department of Mathematics,
Tokyo Denki University,
5 Asahi-cho Senju Adachi-ku,
Tokyo 120--8551, Japan. 
\parno
{\it E-mail address\/}: 
nakayuki@cck.dendai.ac.jp


\begin{thebibliography}{EGA IV-3}

\bibitem[AK]{ak}Abramovich, D., Karu, K. 
\newblock{\em Weak semistable reduction in characteristic $0$}. 
\newblock Invent.~Math.~139 (2000), 241--273. 

\bibitem[Bei]{bei}Beilinson, A. 
\newblock{\em On the crystalline period map}. 
\newblock Cambridge J.~of Math.~1 (2013), 1--51.

\bibitem[Bea]{bea}Beauville, A. 
\newblock{\em Some remarks on K\"{a}hler manifolds with $c_1=0$}. 
\newblock Classification of algebraic and analytic manifolds (Katata, 1982),  
Progr.~Math.~39, Birkh\"{a}user  (1983), 1--26. 


\bibitem[Bo]{bou}Bourbaki, N.                                         
\newblock{\em Alg\`{e}bre commutative}. 
\newblock  Masson, Paris, (1985).

\bibitem[BO]{boi}Berthelot, P., Ogus, A.                                           
\newblock{\em $F$-isocrystals and de Rham cohomology. {\rm I}}. 
\newblock Invent.~Math.~72 (1983), 159-199. 

%\bibitem[C]{ca}Catanese, F. 
%\newblock{\em A superficial working guide to deformations and moduli}. 
%\newblock Handbook of moduli Vol.~I, 
%Adv.~Lect.~Math.~24, Int. Press, Somerville, MA, (2013), 161--215.

\bibitem[D]{de}Deligne, P. 
\newblock{\em \'{E}quations diff\'{e}rentielles 
\`{a} points singuliers r\'{e}guliers}.
\newblock Lecture Notes in Math.~163, Springer-Verlag 
(1970). 

\bibitem[DI]{di}Deligne, P., Illusie, L.
\newblock{\em Rel\`{e}vements 
modulo $p^2$ et d\'{e}compositon du complexe de de Rham}. 
\newblock Invent.~Math.~89 (1987), 247--270.

%\bibitem[E]{ezth}El Zein, F.                                           
%\newblock{\em Th\'{e}orie de Hodge des cycles \'{e}vanescents}.
%\newblock  Ann.~Scient.~\'{E}c.~Norm.~Sup. $4^e$ s\'{e}rie 19 (1986), 107--184.

%\bibitem[EGA III-2]{ega32}Grothendieck, A., Dieudonn\'e, J. 
%\newblock{\em \'El\'ements de g\'eom\'etrie alg\'ebrique 
%${\rm III}$-$2$}.
%\newblock Publ.~Math.~IH\'{E}S 17 (1963). 

\bibitem[EY]{ey}Ertl, V.,  Yamada, K.
\newblock{\em Rigid analytic reconstruction of Hyodo-Kato theory}. 
\newblock Preprint: available from https://arxiv.org/abs/1907.10964. 


\bibitem[Fr]{frg}Friedman, R.                                            
\newblock{\em Global smoothings of varieties with normal crossings}.
\newblock Ann.~of Math.~118 (1983), 75--114.

\bibitem[Fu1]{fc}Fujisawa, T. 
\newblock{\em Limits of Hodge structures in several variables}.
\newblock  Comp.~Math. 115 (1999), 129--183.

\bibitem[Fu2]{fup}Fujisawa, T. 
\newblock{\em Polarizations on limiting mixed Hodge structures}. 
J.~of Singularities 8 (2014), 146--193.

\bibitem[Fu3]{fii}Fujisawa, T. 
\newblock{\em Limits of Hodge structures in several variables, II}.
\newblock  Preprint: available from https://arxiv.org/pdf/1506.02271.pdf.

\bibitem[Fu4]{fr}Fujisawa, T. 
\newblock{\em Limiting mixed Hodge structures 
on the relative log de Rham cohomology groups of 
a projective semistable log smooth degeneration}.
\newblock  Preprint: available from https://arxiv.org/pdf/2011.00926.pdf.

%\bibitem[F]{fut}Fujisawa, T. 
%\newblock{\em Mixed Hodge structures on log smooth degenerations}.
%\newblock  Tohoku Math.~J.~60 (2008), 71--100.

\bibitem[Gra]{grh}Grauert, H.
\newblock{\em Ein Theorem der analytischen Garbentheorie und die Modulr\"{a}ume komplexer Strukturen}. 
\newblock IH\'{E}S Publ.~Math.~5 (1960), 5--64. 


\bibitem[Gro]{grc}Grothendieck, A.
\newblock{\em Crystals and the De Rham Cohomology of Schemes}. 
\newblock In: Dix expos\'{e}s, Advanced Studies in pure Math., North Holland (1968), 
306--358.
%\bibitem[G]{grr}Grauert, H., Riemenschneider, O.
%\newblock{\em Verschwindungss\"{a}tze f\"{u}r analytische Kohomologiegruppen 
%auf komplexen R\"{a}umen}.
%\newblock Invent.~Math.~11 (1970),  263--292 (1970).

%\bibitem[GM]{gm}Griffiths, P., Morgan, J.
%\newblock{\em Rational Homotopy Theory and Differential Forms}.
%\newblock Progr.~Math.~16, Birkh\"{a}user, 2nd editon (2013).

%\bibitem[Hu]{hu}Huybrechts, D. 
%\newblock{\em Lectures on K3 surfaces}. 
%\newblock Cambridge Studies in Advanced Mathematics 158. 
%Cambridge University Press 2016. 

\bibitem[HK]{hk}Hyodo, O., Kato, K. 
\newblock{\em Semi-stable reduction and 
crystalline cohomology with logarithmic poles}. 
\newblock In:  P\'eriodes $p$-adiques, 
Seminaire de Bures, 1988. 
Ast\'{e}risque 223, Soc.~Math.~de France 
(1994),  221--268.


\bibitem[IKN]{ikn}Illusie, L., Kato, K., Nakayama, C.
\newblock{\em Quasi-unipotent Logarithmic Riemann-Hilbert Correspondences}. 
\newblock J.~Math.~Sci.~Univ.~Tokyo 12 (2005), 1--66.



\bibitem[KF]{kf}Kato, F., 
\newblock{\em 
The relative log Poincar\'{e} lemma and relative log de Rham theory}. 
\newblock Duke Math.~J. 93 (1998), 179--206.

\bibitem[KK]{klog1}Kato, K.
\newblock{\em Logarithmic structures 
of Fontaine-Illusie}.
\newblock In: Algebraic analysis, 
geometry, and number theory, 
Johns Hopkins Univ. Press (1989), 191--224.

\bibitem[KatN]{kn}Kato, K., Nakayama, C.
\newblock{\em Log Betti cohomology, 
log \'{e}tale cohomology, and 
log de Rham cohomology of log schemes over ${\mab C}$}.
\newblock Kodai Math.~J.~22 (1999), 161--186.

\bibitem[KajN]{kjn}Kajiwara, T., Nakayama, C.
\newblock{\em Higher direct images of local systems in log Betti cohomology}. 
J.~Math.~Sci.~Univ.~Tokyo 15 (2008), 291--323


\bibitem[KawN]{kawn}Kawamata, Y., Namikawa, Y.                                            
\newblock{\em Logarithmic deformations of normal crossing varieties
and smoothing of degenerate Calabi-Yau varieties}.
\newblock Invent.~Math.~118 (1994), 395--409.

\bibitem[KU]{ku}Kato, K., Usui, S.
\newblock{\em Classifying spaces of degenerating polarized Hodge structures}. 
\newblock Annals of Mathematics Studies 169, Princeton Univ.~Press (2009). 

%\bibitem[Ku]{kud}Kulikov, V.  
%\newblock{\em Degenerations of $K3$ surfaces and Enriques' surfaces}.
%\newblock Math.~USSR Izv.~11 (1977), 957--989.

%\bibitem[Kz]{kz}Katz, N.~M.
%\newblock{\em Nilpotent connections and the monodromy theorem : 
%applications of a result of Turrittin}. 
%\newblock IH\'{E}S Publ.~Math.~39 (1970), 175--232. 




%\bibitem[Ma]{ma}Matsumoto, Y.
%\newblock{\em Degeneration of $K3$ surfaces 
%with non-symplectic automorphisms}.
%\newblock Preprint, available from https://arxiv.org/abs/1612.07569.

\bibitem[Mo]{msemi}Mokrane, A.
\newblock{\em La suite spectrale 
des poids en cohomologie de Hyodo-Kato}.
\newblock Duke Math.~J.~72 (1993), 301--336. 

\bibitem[N1]{nk3}Nakkajima, Y.
\newblock{\em Liftings of log $K3$ surfaces 
and classical log Enriques surfaces in mixed characteristics}.
\newblock J.~of Algebraic Geometry 9 (2000), 355--393.

\bibitem[N2]{ndw}Nakkajima, Y. 
\newblock{\em $p$-adic weight spectral sequences of log varieties}.
\newblock  J.~Math.~Sci.~Univ.~Tokyo 12 (2005), 513--661. 


\bibitem[N3]{nh3}Nakkajima, Y.
\newblock{\em Weight filtration 
and slope filtration on the rigid cohomology of 
a variety in characteristic $p>0$}.
\newblock M\'{e}m.~Soc.~Math.~France 130--131 (2012).



\bibitem[N4]{nb}Nakkajima, Y.
\newblock{\em Limits of weight filtrations and limits of slope filtrations 
on infinitesimal cohomologies in mixed characteristics I}.
\newblock  Preprint: available from https://arxiv.org/abs/1902.00182. 

\bibitem[N5]{nhi}Nakkajima, Y.
\newblock{\em Derived PD-Hirsch extensions of filtered crystalline complexes and 
filtered crysalline dga's}. 
\newblock  Preprint: available from http://arxiv.org/abs/2012.12981.

\bibitem[N6]{nf}Nakkajima, Y.
\newblock{\em An ideal proof for Fujisawa's result and its generalization}.
\newblock  Preprint: available from https://arxiv.org/abs/2012.12985.

\bibitem[NS]{nh2}Nakkajima, Y., Shiho, A.
\newblock{\em Weight filtrations on log crystalline cohomologies 
of families of open smooth varieties}.
\newblock Lecture Notes in Math.~1959, 
Springer-Verlag (2008). 

%\bibitem[O]{og}Oguiso, K. 
%\newblock{\em No cohomologically trivial nontrivial automorphism of 
%generalized Kummer manifolds}. 
%\newblock Nagoya Math.~J.~239 (2020), 110--122. 


\bibitem[NO]{no}Nakayama, C., Ogus, A.
\newblock{\em Relative rounding in toric and logarithmic geometry}. 
\newblock Geometry $\&$ Topology 14 (2010), 2189--2241.

%\bibitem[O]{od}Ogus, A.
%\newblock{\em  On the logarithmic Riemann-Hilbert correspondence}.
%\newblock Documenta Math.~Extra Volume: Kazuya Kato's Fiftieth Birthday (2003),  
%655--724.
 
\bibitem[S]{sti}Steenbrink, J.~H.~M.
\newblock{\em Limits of Hodge structures}. 
\newblock Invent.~Math.~31 (1976),  229--257. 

\bibitem[SGA 7-II]{sga72}Deligne, P., Katz, N.
\newblock{\em Groupes de monodromie en 
g\'{e}om\'{e}trie alg\'{e}brique}.
\newblock Lecture Notes in Math.~340 (1973), 
Springer-Verlag.

\bibitem[Su]{su}Sullivan, D.
\newblock{\em Infinitesimal computations in topology}. 
\newblock Publ.~Math.~IH\'{E}S 47 (1977), 269--331. 

\bibitem[SZ]{stz}Steenbrink, J.~H.~M.,  Zucker, S.
\newblock{\em Variation of mixed Hodge structure.~{\rm I}}.
\newblock  Invent.~Math.~80 (1985), 489--542.

%\bibitem[S]{st2}Steenbrink, J.~H.~M.
%\newblock{\em Logarithmic embeddings of varieties with 
%normal crossings and mixed Hodge 
%structures}.
%\newblock Math.~Ann.~301 (1995), 105--118.

\bibitem[U1]{ur1}Usui, S. 
\newblock{\em Recovery of vanishing cycles by log geometry}.  
\newblock Tohoku Math.~J.~253 (2001) 1--36

%\bibitem[T]{tsp}Tsuji, T.
%\newblock{\em Poincar\'{e} duality for 
%logarithmic crystalline cohomology}. 
%\newblock Compositio Math.~118 (1999), 11--41. 

\bibitem[U2]{ur2}Usui, S. 
\newblock{\em Recovery of vanishing cycles by log geometry~$:$ Case of several variables}.  
\newblock In: Proceeding of International Conference 
``Commutative Algebra and Algebraic Geometry and Computational Methods'', 
Hanoi 1996, Springer-Verlag (1999), 135--144.

%\bibitem[Wa]{wa}Wavrik, J.~J. 
%\newblock{\em Obstructions to the existence of a space of moduli}.  
%In: Global Analysis (Papers in Honor of K. Kodaira), 
%Univ.~Tokyo Press, Tokyo (1969) 403--414. 

\bibitem[We]{weib}Weibel, C.~A.
\newblock{\em An introduction to homological algebra}.
\newblock  Cambridge Studies in Advanced Mathematics 38.
Cambridge University Press, Cambridge (1994).

\end{thebibliography}
\end{document}